\begin{document}
\input xy
\xyoption{all}

\newcommand{\dimv}{\operatorname{\underline{dim}}\nolimits}
\newcommand{\Aus}{\operatorname{Aus}\nolimits}
\renewcommand{\mod}{\operatorname{mod}\nolimits}
\newcommand{\proj}{\operatorname{proj.}\nolimits}
\newcommand{\inj}{\operatorname{inj.}\nolimits}
\newcommand{\rad}{\operatorname{rad}\nolimits}
\newcommand{\res}{\operatorname{res}\nolimits}
\newcommand{\soc}{\operatorname{soc}\nolimits}
\newcommand{\Char}{\operatorname{char}\nolimits}
\newcommand{\Mod}{\operatorname{Mod}\nolimits}
\newcommand{\R}{\operatorname{R}\nolimits}
\newcommand{\End}{\operatorname{End}\nolimits}
\newcommand{\gsub}{\operatorname{gsub}\nolimits}
\newcommand{\Ht}{\operatorname{Ht}\nolimits}
\newcommand{\ind}{\operatorname{ind}\nolimits}
\newcommand{\rep}{\operatorname{rep}\nolimits}
\newcommand{\Ext}{\operatorname{Ext}\nolimits}
\newcommand{\Tor}{\operatorname{Tor}\nolimits}
\newcommand{\Hom}{\operatorname{Hom}\nolimits}
\newcommand{\Pic}{\operatorname{Pic}\nolimits}
\newcommand{\Coker}{\operatorname{Coker}\nolimits}
\newcommand{\GL}{\operatorname{GL}\nolimits}

\newcommand{\Gproj}{\operatorname{Gproj}\nolimits}
\newcommand{\irr}{\operatorname{irr}\nolimits}
\newcommand{\RHom}{\operatorname{RHom}\nolimits}
\renewcommand{\deg}{\operatorname{deg}\nolimits}
\renewcommand{\Im}{\operatorname{Im}\nolimits}
\newcommand{\Ker}{\operatorname{Ker}\nolimits}
\newcommand{\Aut}{\operatorname{Aut}\nolimits}
\newcommand{\Id}{\operatorname{Id}\nolimits}
\newcommand{\Qcoh}{\operatorname{Qch}\nolimits}
\newcommand{\CM}{\operatorname{CM}\nolimits}
\newcommand{\Cp}{\operatorname{Cp}\nolimits}
\newcommand{\coker}{\operatorname{Coker}\nolimits}
\renewcommand{\dim}{\operatorname{dim}\nolimits}
\renewcommand{\div}{\operatorname{div}\nolimits}
\newcommand{\Ab}{{\operatorname{Ab}\nolimits}}
\renewcommand{\Vec}{{\operatorname{Vec}\nolimits}}
\newcommand{\pd}{\operatorname{proj.dim}\nolimits}
\newcommand{\id}{\operatorname{inj.dim}\nolimits}
\newcommand{\Gd}{\operatorname{G.dim}\nolimits}
\newcommand{\gldim}{\operatorname{gl.dim}\nolimits}
\newcommand{\sdim}{\operatorname{sdim}\nolimits}
\newcommand{\add}{\operatorname{add}\nolimits}
\newcommand{\pr}{\operatorname{pr}\nolimits}
\newcommand{\oR}{\operatorname{R}\nolimits}
\newcommand{\oL}{\operatorname{L}\nolimits}
\newcommand{\Gr}{\operatorname{Gr}\nolimits}
\newcommand{\Perf}{{\mathfrak Perf}}

\newcommand{\cc}{{\mathcal C}}
\newcommand{\ce}{{\mathcal E}}
\newcommand{\cs}{{\mathcal S}}
\newcommand{\cf}{{\mathcal F}}
\newcommand{\cx}{{\mathcal X}}
\newcommand{\ct}{{\mathcal T}}
\newcommand{\cu}{{\mathcal U}}
\newcommand{\cv}{{\mathcal V}}
\newcommand{\cn}{{\mathcal N}}
\newcommand{\ch}{{\mathcal H}}
\newcommand{\ca}{{\mathcal A}}
\newcommand{\cb}{{\mathcal B}}
\newcommand{\ci}{{\mathcal I}}
\newcommand{\cj}{{\mathcal J}}
\newcommand{\cm}{{\mathcal M}}
\newcommand{\cp}{{\mathcal P}}
\newcommand{\cq}{{\mathcal Q}}
\newcommand{\cg}{{\mathcal G}}
\newcommand{\cw}{{\mathcal W}}
\newcommand{\co}{{\mathcal O}}
\newcommand{\cd}{{\mathcal D}}
\newcommand{\ck}{{\mathcal K}}
\newcommand{\calr}{{\mathcal R}}
\newcommand{\ol}{\overline}
\newcommand{\ul}{\underline}
\newcommand{\st}{[1]}
\newcommand{\ow}{\widetilde}
\renewcommand{\P}{\mathbf{P}}
\newcommand{\pic}{\operatorname{Pic}\nolimits}
\newcommand{\Spec}{\operatorname{Spec}\nolimits}
\newtheorem{theorem}{Theorem}[section]
\newtheorem{acknowledgement}[theorem]{Acknowledgement}
\newtheorem{algorithm}[theorem]{Algorithm}
\newtheorem{axiom}[theorem]{Axiom}
\newtheorem{case}[theorem]{Case}
\newtheorem{claim}[theorem]{Claim}
\newtheorem{conclusion}[theorem]{Conclusion}
\newtheorem{condition}[theorem]{Condition}
\newtheorem{conjecture}[theorem]{Conjecture}
\newtheorem{construction}[theorem]{Construction}
\newtheorem{corollary}[theorem]{Corollary}
\newtheorem{criterion}[theorem]{Criterion}
\newtheorem{definition}[theorem]{Definition}
\newtheorem{example}[theorem]{Example}
\newtheorem{exercise}[theorem]{Exercise}
\newtheorem{lemma}[theorem]{Lemma}
\newtheorem{notation}[theorem]{Notation}
\newtheorem{problem}[theorem]{Problem}
\newtheorem{proposition}[theorem]{Proposition}
\newtheorem{remark}[theorem]{Remark}
\newtheorem{solution}[theorem]{Solution}
\newtheorem{summary}[theorem]{Summary}
\newtheorem*{thm}{Theorem}

\def \bp{{\mathbf p}}
\def \bA{{\mathbf A}}
\def \bL{{\mathbf L}}
\def \bF{{\mathbf F}}
\def \bS{{\mathbf S}}
\def \bC{{\mathbf C}}

\def \Z{{\Bbb Z}}
\def \F{{\Bbb F}}
\def \C{{\Bbb C}}
\def \N{{\Bbb N}}
\def \Q{{\Bbb Q}}
\def \G{{\Bbb G}}
\def \P{{\Bbb P}}
\def \K{{\Bbb K}}
\def \E{{\Bbb E}}
\def \A{{\Bbb A}}
\def \BH{{\Bbb H}}
\def \T{{\Bbb T}}

\title[Cohen-Macaulay Auslander algebras]{Desingularization of quiver Grassmannians for Gentle algebras}
\author[Chen]{Xinhong Chen}
\address{Department of Mathematics, Southwest Jiaotong University, Chengdu 610031, P.R.China}
\email{chenxinhong@swjtu.edu.cn}

\author[Lu]{Ming Lu$^\dag$}
\address{Department of Mathematics, Sichuan University, Chengdu 610064, P.R.China}
\email{luming@scu.edu.cn}
\thanks{$^\dag$ Corresponding author}

\subjclass[2000]{13F60, 14M15, 16G20.}
\keywords{Desingularization, Quiver Grassmannians, Cohen-Macaulay Auslander algebra, Gentle algebra.}

\begin{abstract}
Following \cite{CFR13}, a desingularization of arbitrary quiver Grassmannians for finite dimensional Gorenstein projective modules of $1$-Gorenstein gentle algebras is constructed in terms of quiver Grassmannians for their Cohen-Macaulay Auslander algebras.
\end{abstract}

\maketitle

\section{Introduction}
\subsection{Gorenstein projective mdules} The concept of Gorenstein projective modules over any ring can be dated back to \cite{AB}, where Auslander and Bridger introduced the modules of $G$-dimension zero over two-sided noetherian rings, and formed by Enochs and Jenda \cite{EJ}. This class of modules satisfy some good stable properties, becomes a main ingredient in the relative homological algebra, and widely used in the representation theory of algebras and algebraic geometry, see e.g. \cite{AB,AR2,EJ,Bu,Ha1,Be}. It also plays as an important tool to study the representation theory of Gorenstein algebra, see e.g. \cite{AR2,Bu,Ha1}.

\subsection{Gorenstein algebras} Gorenstein algebra $\Lambda$, where by definition $\Lambda$ has finite injective dimension both as a left and a right $\Lambda$-module, is inspired from commutative ring theory.
A fundamental result of Buchweitz \cite{Bu} and Happel \cite{Ha1} states that for a Gorenstein algebra $\Lambda$, the singularity category is triangle equivalent to the stable category of the Gorenstein projective (also called (maximal) Cohen-Macaulay) $\Lambda$-modules, which generalized Rickard's result \cite{Ri} on self-injective algebras.

\subsection{Cohen-Macaulay Auslander algebras} For any Artin algebra $\Lambda$, denote by $\Gproj\Lambda$ its subcategory of Gorenstein projective modules. If $\Gproj\Lambda$ has only finitely many isoclasses of indecomposable objects, then $\Lambda$ is called CM-\emph{finite}. In this case, inspired by the definition of Auslander algebra, the Cohen-Macaulay Auslander algebra (also called the relative Auslander algebra) is defined to be $\End_\Lambda(\bigoplus_{i=1}^n E_i)^{op}$, where $E_1,\dots,E_n$ are all pairwise non-isomorphic indecomposable Gorenstein projective modules, \cite{Be1,Be,LZ}.
A CM-finite algebra $\Lambda$ is Gorenstein if and only if $\gldim \Aus(\Gproj\Lambda)<\infty$, \cite{LZ,Be}.
Pan proves that for any two Gorenstein Artin algebras $A$ and $B$ which are CM-finite, if $A$ and $B$ are derived equivalent, then their Cohen-Macaulay Auslander algebras are also
derived equivalent \cite{P}.

\subsection{Gentle algebras} As an important class of Gorenstein algebras \cite{GR}, gentle algebras were introduced in \cite{AS} as appropriate context for the investigation of algebras derived equivalent to hereditary algebras of type $\tilde{\A}_n$. The gentle algebras which are trees are precisely the algebras derived equivalent to hereditary algebras of type $\A_n$, see \cite{AH}.
It is interesting to notice that the class of gentle algebras is closed under derived equivalence, \cite{SZ}. For singularity categories of gentle algebras, Kalck determines their singularity category by finite products of $d$-cluster categories of type $\A_1$ \cite{Ka}. From \cite{Ka}, it is easy to see that Gentle algebras are CM-finite, which inspires us to study the properties of the Cohen-Macaulay Auslander algebras of Gentle algebras \cite{CL2}. Moreover, many important algebras are gentle, such as tilted algebras of type $\A_n$, algebras derived equivalent to $\A_n$-configurations of projective lines \cite{Bur} and also the cluster-tilted algebras of type $\A_n$ \cite{BMR}, $\tilde{\A}_n$ \cite{ABCP}.

\subsection{Quiver Grassmannians}
Quiver Grassmannians were first introduced by Crawley-Boevey \cite{C-B89}, Schofield \cite{Sch} to study the generic properties of quiver representations. They are projective varieties parametrizing subrepresentations of a quiver representation and Reineke shows in \cite{Rei} that every projective variety can be realized as a quiver Grassmannian. It was observed in \cite{CC} that these varieties play an important role in the additive categorification of (quantum) cluster algebra theory \cite{FZ} since cluster variables can be described in terms of the Euler characteristic of quiver Grassmannians, cf. for example \cite{CC,CK,DWZ,Na}. Subsequently, specific classes of quiver Grassmannians were studied by several authors, in particular, the varieties of subrepresentations of exceptional quiver representations since they are smooth projective varieties, see e.g. \cite{CR}.

In \cite{CFR12,CFR13a}, Cerulli-Feigin-Reineke initiated a systematic study of (singular) quiver Grassmannians of Dynkin quivers, starting from the surprising observation that the type $\A$ degenerate flag varieties studies in \cite{Fei1,Fei2,FF} are of this form. An important aspect of their work is the construction of desingularizations for the Dynkin type, namely a desingularization of arbitrary quiver Grassmannians for representations of Dynkin quivers is constructed in terms of quiver Grassmannians for an algebra derived equivalent to the Auslander algebra of the quiver, which is done in \cite{CFR13} generalizing \cite{FF} for the type $\A$ degenerate flag varieties. After that, they link these desingularizations to a construction by Hernandez-Leclerc \cite{HL}, which has been generalizeed by Leclerc-Plamondon \cite{LP} and further generalized by Keller-Scherotzke \cite{KS1,KS2}. The desingularization constructed in \cite{KS2} is the desingularization map for graded quiver varieties introduced by Nakajima \cite{Na1,Na}, and this generalized \cite{CFR13} to much more general situations and in particular for all modules over the repetitive algebra of an arbitrary iterated tilted algebra of Dynkin type.

\subsection{Main result}In this paper, we consider the quiver Grassmannians of arbitrary Gorenstein projective modules over $1$-Gorenstein gentle algebras $\Lambda$. Inspired by \cite{CFR13}, we use quiver Grassmannians over the Cohen-Macaulay Auslander algebra $\Gamma$ of $\Lambda$ to construct a desingularization of the quiver Grassmannians of arbitrary Gorenstein projective modules over $\Lambda$.

The paper is organized as follows. In section 2, we collect some standard material on gentle algebras, Gorenstein algebras and quiver Grassmannians. In Section 3, we define the key functor $\Phi:\mod \Lambda \rightarrow \mod \Gamma$, and prove that for any Gorenstein projective $\Lambda$-module $M$, the quiver Grassmannians of $\Phi(M)$ is smooth with irreducible and equidimensional connected components. In Section 4, we use the technique of \cite{CFR13} and the results of the previous sections to construct the desingularizations of quiver Grassmannians. Finally, in Section 5 we close with some examples of desingularizations.

\vspace{0.2cm} \noindent{\bf Acknowledgments.}
This work is inspired by some discussions with the first author's supervisor Bin Zhang. The authors thank him very much.

The first author(X. Chen) was supported by the Fundamental Research Funds for the Central Universities A0920502051411-45.

The corresponding author(M. Lu) was supported by the National Natural Science Foundation of China (Grant No. 11401401).

\section{Preliminaries}
Let $K$ be an algebraically closed field. For a $K$-algebra, we always mean a basic finite dimensional associative $K$-algebra. For an additive category $\ca$, we denote by $\ind \ca$ the isomorphism classes of indecomposable objects in $\ca$.

Let $Q$ be a quiver and $\langle I\rangle$ an admissible ideal in the path algebra $KQ$ which is generated by a set of relations $I$. Denote by $(Q,I)$ the \emph{associated bound quiver}. For any arrow $\alpha$ in $Q$ we denote by $s(\alpha)$ its starting vertex and by $t(\alpha)$ its ending vertex. An \emph{oriented path} (or path for short) $p$ in $Q$ is a sequence $p=\alpha_1\alpha_2\dots \alpha_r$ of arrows $\alpha_i$ such that $t(\alpha_i)=s(\alpha_{i-1})$ for all $i=2,\dots,r$.
\subsection{Gentle algebras}
We first recall the definition of special biserial algebras and of gentle algebras.
\begin{definition}[\cite{SW}]
The pair $(Q,I)$ is called \emph{special biserial} if it satisfies the following conditions.
\begin{itemize}
\item Each vertex of $Q$ is starting point of at most two arrows, and end point of at most two arrows.
\item For each arrow $\alpha$ in $Q$ there is at most one arrow $\beta$ such that $\alpha\beta\notin I$, and at most one arrow $\gamma$ such that $\gamma\alpha\notin I$.
\end{itemize}
\end{definition}
\begin{definition}[\cite{AS}]
The pair $(Q,I)$ is called \emph{gentle} if it is special biserial and moreover the following holds.
\begin{itemize}
\item The set $I$ is generated by zero-relations of length $2$.
\item For each arrow $\alpha$ in $Q$ there is at most one arrow $\beta$ with $t(\beta)=s(\alpha)$ such that $\alpha\beta\in I$, and at most one arrow $\gamma$ with $s(\gamma)=t(\alpha)$ such that $\gamma\alpha\in I$.
\end{itemize}

\end{definition}
A finite dimensional algebra $A$ is called \emph{special biserial} (resp., \emph{gentle}), if it has a presentation as $A=KQ/\langle I\rangle$ where $(Q,I)$ is special biserial (resp., gentle).

\subsection{Singularity categories and Gorenstein algebras}
Let $\Gamma$ be a finite dimensional $K$-algebra. Let $\mod \Gamma$ be the category of finitely generated left $\Gamma$-modules. For an arbitrary $\Gamma$-module $_\Gamma X$, we denote by $\pd_\Gamma X$ (resp. $\id_\Gamma X$) the projective dimension (resp. the injective dimension) of the module $_\Gamma X$. A $\Gamma$-module $G$ is \emph{Gorenstein projective}, if there is an exact sequence $$P^\bullet:\cdots \rightarrow P^{-1}\rightarrow P^0\xrightarrow{d^0}P^1\rightarrow \cdots$$ of projective $\Gamma$-modules, which stays exact under $\Hom_\Gamma(-,\Gamma)$, and such that $G\cong \Ker d^0$. We denote by $\Gproj(\Gamma)$ the subcategory of Gorenstein projective $\Gamma$-modules.

\begin{definition}[\cite{AR1,AR2}]
A finite dimensional algebra $\Gamma$ is called a Gorenstein algebra if $\Gamma$ satisfies $\id \Gamma_\Gamma<\infty$ and $\id_\Gamma \Gamma<\infty$.
\end{definition}

Observe that for a Gorenstein algebra $\Gamma$, we have $\id _\Gamma\Gamma=\id \Gamma_\Gamma$, \cite[Lemma 6.9]{Ha1}; the common value is denoted by $\Gd \Gamma$. If $\Gd \Gamma\leq d$, we say that $\Gamma$ is $d$-Gorenstein.

\begin{theorem}[\cite{Bu,EJ}]\label{theorem characterize of gorenstein property}
Let $\Gamma$ be an artin algebra and let $d\geq0$. Then the following statements are equivalent:

(1) the algebra $\Gamma$ is $d$-Gorenstein;

(2) $\Gproj(\Gamma)=\Omega^d(\mod \Gamma)$, where $\Omega$ is the syzygy functor.

In this case, A module $G$ is Gorenstein projective if and only if there is an exact sequence $0\rightarrow G\rightarrow P^0\rightarrow P^1\rightarrow \cdots$ with each
$P^i$ projective.
\end{theorem}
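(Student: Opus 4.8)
The plan is to prove the theorem by establishing both directions and then the additional characterization of Gorenstein projective modules.

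For the implication $(1)\Rightarrow(2)$: Suppose $\Gamma$ is $d$-Gorenstein, so $\id{}_\Gamma\Gamma = \id\Gamma_\Gamma = d$. First I would show $\Gproj(\Gamma)\subseteq\Omega^d(\mod\Gamma)$. Given a Gorenstein projective module $G$, by definition there is a complete projective resolution $P^\bullet$ with $G\cong\Ker d^0$; truncating, $G$ is the $d$-th syzygy of the cokernel $\Coker d^{-d-1}$ (or more directly, the leftward part of $P^\bullet$ exhibits $G$ as $\Omega^d$ of some module), so $G\in\Omega^d(\mod\Gamma)$. Conversely, for $\Omega^d(\mod\Gamma)\subseteq\Gproj(\Gamma)$: take any $M\in\mod\Gamma$ and let $G=\Omega^d M$ sitting in a projective resolution $0\to G\to P^{d-1}\to\cdots\to P^0\to M\to 0$. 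I need to produce a complete projective resolution through $G$. The key input is that $\id\Gamma_\Gamma = d$ forces $\Ext^i_\Gamma(-,\Gamma)$-vanishing in the relevant range, which gives $\Ext^i_\Gamma(G,\Gamma)=0$ for all $i>0$ (a dimension-shift argument: $\Ext^i_\Gamma(G,\Gamma)\cong\Ext^{i+d}_\Gamma(M,\Gamma)=0$ since $\id\Gamma\le d$). Then one builds the right half of the complete resolution: applying $\Hom_\Gamma(-,\Gamma)$ to a projective resolution of the transpose, or invoking that over a $d$-Gorenstein algebra every module has a projective resolution of the $d$-th syzygy that splices with a coresolution by projectives — this is exactly where the standard theory of Gorenstein algebras (Auslander–Reiten, Buchweitz) is used.

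For the final characterization (the "In this case" clause), I would argue: if $G$ is Gorenstein projective then from the complete resolution the cokernel side $0\to G\to P^0\to P^1\to\cdots$ with each $P^i$ projective is immediate, and it stays exact under $\Hom_\Gamma(-,\Gamma)$ automatically. For the converse, suppose $0\to G\to P^0\to P^1\to\cdots$ with all $P^i$ projective. Using $\Gd\Gamma=d$, one shows $\Ext^i_\Gamma(G,\Gamma)=0$ for $i>0$ by the same dimension-shift using $\id\Gamma_\Gamma\le d$ and the given coresolution. To get the left half of the complete resolution and verify $\Hom_\Gamma(-,\Gamma)$-exactness, one combines a genuine projective resolution $\cdots\to Q_1\to Q_0\to G\to 0$ with the given coresolution; the $\Ext$-vanishing guarantees the spliced complex remains exact after applying $\Hom_\Gamma(-,\Gamma)$. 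This reduces everything to the standard fact that over a Gorenstein algebra, Gorenstein projectivity is detected by $\Ext^i_\Gamma(G,\Gamma)=0$ for all $i>0$ together with the existence of such a coresolution.

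I expect the main obstacle to be the careful bookkeeping in the converse direction $\Omega^d(\mod\Gamma)\subseteq\Gproj(\Gamma)$, specifically constructing the \emph{totally acyclic} complex (exact and $\Hom_\Gamma(-,\Gamma)$-exact) through a $d$-th syzygy. The vanishing of $\Ext^i_\Gamma(\Omega^d M,\Gamma)$ for $i>0$ is the crucial numerical fact and follows cleanly from $\id\Gamma\le d$ by dimension shifting; the remaining work is assembling the two-sided resolution and checking dualized exactness, which is routine but notation-heavy. Since this is the content of \cite{Bu} and \cite{EJ}, in the write-up I would cite those sources for the technical core and only spell out the dimension-shift computations that make the equivalence transparent.
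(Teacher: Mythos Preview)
The paper does not prove this theorem at all: it is stated as a cited result from \cite{Bu,EJ} with no argument given, so there is no ``paper's own proof'' to compare against. Your proposal is therefore supplying content the paper deliberately omits.

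That said, your plan has a genuine gap: you announce ``establishing both directions'' but then only treat $(1)\Rightarrow(2)$ and the ``in this case'' characterization. The implication $(2)\Rightarrow(1)$ is never addressed. You need to show that if $\Gproj(\Gamma)=\Omega^d(\mod\Gamma)$ then $\id{}_\Gamma\Gamma\le d$ and $\id\Gamma_\Gamma\le d$. The left-module bound follows from your own dimension-shift idea run in reverse: for any $M$, $\Omega^d M$ is Gorenstein projective by hypothesis, hence $\Ext^i_\Gamma(\Omega^d M,\Gamma)=0$ for $i>0$, so $\Ext^{j}_\Gamma(M,\Gamma)=0$ for $j>d$, giving $\id{}_\Gamma\Gamma\le d$. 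The right-module bound requires an extra step (e.g.\ a duality argument via $D=\Hom_K(-,K)$, or Zaks' theorem that finiteness of one-sided self-injective dimension forces equality of both), and you should say which route you take.

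A second, smaller issue: in $(1)\Rightarrow(2)$ your construction of the right half of the totally acyclic complex for $G=\Omega^d M$ is left to ``standard theory.'' The honest content here is that $\Ext^{>0}_\Gamma(G,\Gamma)=0$ alone does not obviously produce a projective coresolution of $G$; one typically passes through the $\Gamma$-dual $G^\vee=\Hom_\Gamma(G,\Gamma)$, takes a projective resolution of $G^\vee$ as a right module, dualizes back, and uses reflexivity of $G$ (which itself needs proof). If you intend to cite \cite{Bu,EJ} for this step anyway, say so explicitly rather than gesturing at ``splicing.''
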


For an algebra $\Gamma$, the \emph{singularity category} of $\Gamma$ is defined to be the quotient category $D_{sg}^b(\Gamma):=D^b(\Gamma)/K^b(\proj \Gamma)$ \cite{Bu,Ha1,Or1}. Note that $D_{sg}^b(\Gamma)$ is zero if and only if $\gldim \Gamma<\infty$ \cite{Ha1}.

\begin{theorem}\cite{Bu,Ha1}
Let $\Gamma$ be a Gorenstein algebra. Then $\Gproj (\Gamma)$ is a Frobenius category with the projective modules as the projective-injective objects. The stable category $\underline{\Gproj}(\Gamma)$ is triangle equivalent to the singularity category $D^b_{sg}(\Gamma)$ of $\Gamma$.
\end{theorem}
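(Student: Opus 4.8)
The plan is to follow Buchweitz \cite{Bu} and Happel \cite{Ha1}, proving the two assertions in turn. For the Frobenius structure I would give $\Gproj(\Gamma)$ the exact structure inherited from $\mod\Gamma$, whose conflations are the short exact sequences of modules all of whose terms are Gorenstein projective. The key homological input is that $\Ext^i_\Gamma(G,\Gamma)=0$ for all $i>0$ when $G\in\Gproj(\Gamma)$, which is immediate from applying $\Hom_\Gamma(-,\Gamma)$ to a complete resolution of $G$; hence $\Ext^i_\Gamma(G,P)=0$ for every projective $P$ and $i>0$, so projective modules are injective (and, trivially, projective) objects of $\Gproj(\Gamma)$. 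There are enough projectives because an epimorphism $P\twoheadrightarrow G$ with $P$ projective has kernel a first syzygy of $G$, which is again Gorenstein projective; there are enough injectives because the definition of ``Gorenstein projective'' directly supplies an admissible monic $G\hookrightarrow P^0$ into a projective with Gorenstein projective cokernel. Finally, an injective object $I$ of $\Gproj(\Gamma)$ is a summand of any projective cover (the admissible epic splits), hence projective. Thus the injectives are exactly the projectives, $\Gproj(\Gamma)$ is Frobenius, and by Happel's theorem $\underline{\Gproj}(\Gamma)$ is triangulated with suspension the cosyzygy functor $\Omega^{-1}$ and distinguished triangles induced by conflations.

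For the equivalence I would build the comparison functor $F\colon\underline{\Gproj}(\Gamma)\to D^b_{sg}(\Gamma)$ by viewing a Gorenstein projective module as a stalk complex in degree $0$; this annihilates projective modules (they become perfect complexes) and morphisms factoring through projectives, so it descends to the stable category, and from the conflation $0\to G\to P^0\to\Omega^{-1}G\to 0$ one reads off $\Omega^{-1}G\cong G[1]$ in $D^b_{sg}(\Gamma)$, so $F$ commutes with suspension; checking that $F$ carries conflations to distinguished triangles (after comparing with projective presentations) then makes $F$ triangulated. Density uses the Gorenstein hypothesis via Theorem~\ref{theorem characterize of gorenstein property}: if $\Gamma$ is $d$-Gorenstein then $\Gproj(\Gamma)=\Omega^d(\mod\Gamma)$, and since every object of $D^b_{sg}(\Gamma)$ is isomorphic to a shifted stalk module $M[i]$ while $\Omega M\cong M[-1]$ in $D^b_{sg}(\Gamma)$, we get $M[i]\cong(\Omega^dM)[d+i]$ with $\Omega^dM\in\Gproj(\Gamma)$, which lies in the essential image of $F$.

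The main obstacle is full faithfulness, i.e.\ that the natural map $\underline{\Hom}_\Gamma(G,G')\to\Hom_{D^b_{sg}(\Gamma)}(G,G')$ is bijective for $G,G'\in\Gproj(\Gamma)$. Injectivity is the easier half: a module map $G\to G'$ that vanishes in $D^b_{sg}(\Gamma)$ factors through a perfect complex, and since $\Ext^{>0}_\Gamma(G,P)=0$ for projective $P$, a dimension-shift argument along the complete resolution of $G$ shows that every map from $G$ into a perfect complex factors through a projective module, so the original map is stably zero. Surjectivity is the delicate point: a morphism in $D^b_{sg}(\Gamma)$ is represented by a roof $G\xleftarrow{\,s\,}Z^\bullet\xrightarrow{\,g\,}G'$ with $\operatorname{cone}(s)$ perfect, and one must replace $Z^\bullet$ by a bounded-above complex of projectives, use the vanishing of $\Ext^{>0}_\Gamma(G,-)$ against bounded complexes of projectives to ``contract'' $s$, and thereby reduce $g$, modulo maps through projectives, to an honest homomorphism $G\to G'$. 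A clean way to organize this is to identify $\underline{\Gproj}(\Gamma)$ with the homotopy category of acyclic complexes of finitely generated projectives (via complete resolutions) and to observe that over a Gorenstein algebra the canonical functor from that category to $D^b_{sg}(\Gamma)$ is an equivalence — the finiteness of $\Gd\Gamma$, together with Theorem~\ref{theorem characterize of gorenstein property}, is exactly what makes this step work. Granting full faithfulness, $F$ is a triangle equivalence and the theorem follows.
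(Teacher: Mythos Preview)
The paper does not prove this theorem at all: it is quoted from Buchweitz \cite{Bu} and Happel \cite{Ha1} as background and used without argument. So there is no ``paper's own proof'' to compare against; your outline is essentially the standard Buchweitz--Happel argument and is the right thing to write down if a proof were required.

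Your sketch is sound in outline, but one step is mis-stated. When you argue that every injective object $I$ of $\Gproj(\Gamma)$ is projective, you write that ``$I$ is a summand of any projective cover (the admissible epic splits)''. Injectivity of $I$ in an exact category does \emph{not} force deflations onto $I$ to split; it forces inflations out of $I$ to split. The correct argument is the dual one you already set up: since $I$ is Gorenstein projective, the definition supplies an admissible monic $I\hookrightarrow P^0$ with $P^0$ projective, and injectivity of $I$ in $\Gproj(\Gamma)$ makes this monic split, so $I$ is a summand of $P^0$ and hence projective. With that fix the Frobenius part is fine.

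For the equivalence, your plan is exactly Buchweitz's: build $F$ on stalk complexes, get essential surjectivity from $\Gproj(\Gamma)=\Omega^d(\mod\Gamma)$ via Theorem~\ref{theorem characterize of gorenstein property}, and identify full faithfulness as the crux. Your remark that this is cleanly handled by passing through the homotopy category of totally acyclic complexes of projectives (complete resolutions) is the standard and correct way to organize it; the Gorenstein hypothesis is precisely what makes every acyclic complex of projectives totally acyclic and what lets one truncate bounded complexes into $K^b(\proj\Gamma)$ after shifting by $d$. Nothing here deviates from the references the paper cites.
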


An algebra is of \emph{finite Cohen-Macaulay type}, or simply, \emph{CM-finite}, if there are only finitely many isomorphism classes of indecomposable finitely generated Gorenstein projecitve modules. Clearly, $A$ is CM-finite if and only if there is a finitely generated module $E$ such that $\Gproj A=\add E$. In this way, $E$ is called to be a \emph{Gorenstein projective generator}. If $\gldim A<\infty$, then $\Gproj A=\proj A$, so $A$ is CM-finite. If $A$ is self-injective, then $\Gproj A=\mod A$, so $A$ is CM-finite if and only if $A$ is representation finite.

Let $A$ be a CM-finite algebra, $E_1,\dots,E_n$ all the pairwise non-isomorphic indecomposable Gorenstein projective $A$-modules. Put $E=\oplus_{i=1}^n E_i$. Then $E$ is a Gorenstein projective generator. We call $\Aus(\Gproj A):=(\End_A E)^{op}$ the \emph{Cohen-Macaulay Auslander algebra}(also called \emph{relative Auslander algebra}) of $A$.

\begin{lemma}[\cite{LZ}]\label{lemma global dimension of Gorenstein Auslander algebra}
Let $A$ be a CM-finite Artin algebra. Then we have the following:

(i) $\gldim \Aus(\Gproj A)=0$ if and only if $A$ is semisimple.

(ii) $\gldim \Aus(\Gproj A)=1$ if and only if $\gldim A=1$.

(iii) $\gldim \Aus(\Gproj A)=2$ if and only if either

(a) $\Gproj A=\proj A$ and $\gldim A=2$, or

(b) $\Gproj A\neq \proj A$ and $A$ is a Gorenstein algebra with $\Gd A\leq 2$.

(iv) If $\Gd A\geq3$, then:
$$\gldim \Aus(\Gproj A)=\Gd A.$$

(v) $A$ is Gorenstein if and only if Cohen-Macaulay Auslander algebra $\Aus(\Gproj A)$ has finite global dimension.
\end{lemma}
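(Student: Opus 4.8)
The plan is to reduce the whole statement to a computation of the projective dimensions of the simple $\Gamma$-modules, carried out through the projectivization functor $F:=\Hom_A(E,-)\colon\mod A\to\mod\Gamma$. Recall the formal properties of $F$: it restricts to an equivalence $\Gproj A\xrightarrow{\ \sim\ }\proj\Gamma$, and for $G\in\add E$ and any $M$ the canonical map $\Hom_A(G,M)\to\Hom_\Gamma(FG,FM)$ is bijective, so $F$ reflects isomorphisms between objects of $\Gproj A$ and $FM$ is projective over $\Gamma$ if and only if $M\in\Gproj A$; moreover $F$ is left exact and is exact on every short exact sequence $0\to X\to Y\to Z\to0$ with $\pd_AX<\infty$, because $E\in\Gproj A$ forces $\Ext^1_A(E,X)=0$ for such $X$. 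The simple $\Gamma$-modules $T_1,\dots,T_n$ correspond to the indecomposables of $\Gproj A$, which split into the indecomposable projective $A$-modules and the indecomposable non-projective Gorenstein projective $A$-modules; since $\gldim\Gamma=\max_i\pd_\Gamma T_i$, it suffices to compute $\pd_\Gamma T_i$ in these two cases.

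If $E_i$ is a non-projective indecomposable Gorenstein projective module, then $\Gproj A=\add E$, being a functorially finite resolving subcategory, has Auslander--Reiten sequences; let $0\to\tau E_i\to F_i\xrightarrow{\ \pi\ }E_i\to0$ be the one ending at $E_i$ (a conflation, hence exact in $\mod A$), with $\tau E_i\in\Gproj A$ and $\tau E_i\neq0$. As $\pi$ is minimal right almost split in $\Gproj A$, the morphism $F\pi$ has image $\rad(\Gamma e_i)$ and is a projective cover of it, while $\ker F\pi=F(\tau E_i)$ by left exactness; thus
\[
0\longrightarrow F(\tau E_i)\longrightarrow FF_i\longrightarrow FE_i\longrightarrow T_i\longrightarrow0
\]
is a minimal projective resolution of $T_i$, and since Auslander--Reiten sequences do not split, $F(\tau E_i)\hookrightarrow FF_i$ is not a split monomorphism, so $\pd_\Gamma T_i=2$.

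If $E_i=P$ is an indecomposable projective $A$-module, then a direct check --- a non-radical map $X\to P$ is a split epimorphism, i.e.\ surjective --- gives $\rad_A(X,P)=\Hom_A(X,\rad P)$ for every $X$, hence $\rad(\Gamma e_i)\cong F(\rad P)$ and $\pd_\Gamma T_i=1+\pd_\Gamma F(\rad P)$ when $\rad P\neq0$ (while $T_i$ is projective when $\rad P=0$). I am thus reduced to the formula
\[
\pd_\Gamma\Hom_A(E,M)=\Gproj\text{-}\dim_AM:=\inf\{\,r\ge0:\Omega^rM\in\Gproj A\,\}
\]
for arbitrary $M$, which I would prove using the minimal right $\Gproj A$-approximation $0\to K\to G\xrightarrow{\ \varepsilon\ }M\to0$ (it exists as $\Gproj A$ is functorially finite). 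One shows $\pd_AK=\Gproj\text{-}\dim_AM-1$ whenever the right side is positive: "$\le$" is of Auslander--Buchweitz type, and "$\ge$" follows by splicing a minimal projective resolution of $K$ with $\varepsilon$ into a $\Gproj A$-resolution of $M$ of length $\pd_AK+1$. Since $\pd_AK<\infty$, $F$ is exact on this sequence and $F\varepsilon$ is a projective cover (right-minimality of $\varepsilon$ being preserved by $F$), so $\Omega_\Gamma FM=FK$; iterating, and using that $F$ carries a minimal projective resolution of a finite-projective-dimension module to a minimal one over $\Gamma$ (hence $\pd_\Gamma FL=\pd_AL$ for $\pd_AL<\infty$), gives the displayed formula, the value $\infty$ included.

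Assembling: with $\Gproj\text{-}\dim_A(\rad P)=\max(0,\Gproj\text{-}\dim_A(\operatorname{top}P)-1)$ and $\sup\{\Gproj\text{-}\dim_AS:S\text{ simple}\}=\Gd A$ (both sides being $\infty$ exactly when $A$ is not Gorenstein --- this follows from Theorem \ref{theorem characterize of gorenstein property} and a horseshoe argument reducing $\Gproj\text{-}\dim$ of a module to that of its composition factors), one obtains, for $A$ not semisimple, $\gldim\Gamma=\max(\Gd A,1)$ if $\Gproj A=\proj A$ and $\gldim\Gamma=\max(\Gd A,2)$ if $\Gproj A\neq\proj A$ (with $\Gd A=\infty$ when $A$ is not Gorenstein), while $\gldim\Gamma=0$ when $A$ is semisimple. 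Reading off the cases, together with the standard facts that $\gldim A<\infty$ implies $\Gproj A=\proj A$ and $\gldim A=\Gd A$, and that $\gldim A=1$ forces $A$ non-semisimple, gives (i)--(v). I expect the main obstacle to be the precise value in $\pd_\Gamma\Hom_A(E,M)=\Gproj\text{-}\dim_AM$ --- in particular the identity $\pd_AK=\Gproj\text{-}\dim_AM-1$ and the check that the approximation-kernels compute the $\Gamma$-syzygies minimally --- rather than the (purely formal) derivation of the five statements from it.
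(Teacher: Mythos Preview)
The paper does not give its own proof of this lemma; it is quoted verbatim from \cite{LZ}. So there is no in-paper argument to compare against, only the cited source.

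Your approach is the standard one and is essentially the route taken in \cite{LZ}: pass to $\Gamma=\Aus(\Gproj A)$ via the projectivization functor $F=\Hom_A(E,-)$, compute $\pd_\Gamma T_i$ for each simple $\Gamma$-module according to whether the corresponding indecomposable $E_i$ is projective or not, and reduce the projective case to the formula $\pd_\Gamma FM=\Gproj\text{-}\dim_A M$. The treatment of non-projective $E_i$ via the relative Auslander--Reiten sequence in $\Gproj A$ is exactly Auslander's classical argument for Auslander algebras, and your assembly of the five statements from the two summary formulas is clean.

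One small point worth tightening: you invoke $\pd_AK=\Gproj\text{-}\dim_AM-1$ for the kernel of the \emph{minimal} right $\Gproj A$-approximation and then use finiteness of $\pd_AK$ to get exactness of $F$ on the approximation sequence. In fact exactness of $F$ on $0\to K\to G\xrightarrow{\varepsilon}M\to0$ needs no hypothesis on $K$ at all --- it is immediate from $\varepsilon$ being a right $\add E$-approximation that $F\varepsilon$ is surjective. Consequently the iteration $\Omega_\Gamma^r FM\cong FK_r$ goes through unconditionally, and together with the resolving-subcategory Schanuel lemma (so that $K_r\in\Gproj A$ iff $\Omega^rM\in\Gproj A$) this yields $\pd_\Gamma FM=\Gproj\text{-}\dim_AM$ including the value $\infty$, without ever needing the precise equality $\pd_AK=\Gproj\text{-}\dim_AM-1$. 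Your proof is correct as written, but routing the argument this way removes the one step you flagged as the likely obstacle.
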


Gei{\ss} and Reiten \cite{GR} have shown that gentle algebras are Gorenstein algebras. So their Cohen-Macaulay Auslander algebras have finite global dimensions.

The singularity category of a gentle algebra is characterized by Kalck in \cite{Ka}, we recall it as follows. For a gentle algebra $\Lambda=KQ/\langle I\rangle$, we denote by $\cc(\Lambda)$ the set of equivalence classes (with respect to cyclic permutation) of \emph{repetition-free} cyclic paths $\alpha_1\dots\alpha_n$ in $Q$ such that $\alpha_i\alpha_{i+1}\in I$ for all $i$, where we set $n+1=1$. Moreover, we set $l(c)$ for the \emph{length} of a cycle $c\in\cc(\Lambda)$, i.e. $l(\alpha_1\dots\alpha_n)=n$.

For every arrow $\alpha\in Q_1$, there is at most one cycle $c\in\cc(\Lambda)$ containing it. We define $R(\alpha)$ to be the \emph{left ideal} $\Lambda \alpha$ generated by $\alpha$.
It follows from the definition of gentle algebras that this is a direct summand of the radical $\rad P_{s(\alpha)}$ of the indecomposable projective $\Lambda$-module $P_{s(\alpha)}=\Lambda e_{s(\alpha)}$, where $e_{s(\alpha)}$ is the idempotent corresponding to $s(\alpha)$. In fact, all radical summands of indecomposable projectives arise in this way.

\begin{theorem}[\cite{Ka}]\label{theorem Kalck}
Let $\Lambda=KQ/\langle I\rangle$ be a gentle algebra. Then

\emph{(i)} $\ind \Gproj(\Lambda)=\ind \proj \Lambda \bigcup \{R(\alpha_1),\dots, R(\alpha_n)|c=\alpha_1\cdots \alpha_n\in\cc(\Lambda)\}$.

\emph{(ii)} There is an equivalence of triangulated categories
$$D^b_{sg}(\Lambda)\simeq \prod_{c\in\cc(\Lambda)} \frac{D^b( K)}{[l(c)]} ,$$
where $ D^b(K)/[l(c)]$ denotes the triangulated orbit category, see Keller \cite{Ke}.
\end{theorem}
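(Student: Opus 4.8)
The plan is to reduce both parts to the string combinatorics of $\mod\Lambda$ by exploiting the Gorenstein property. Since a gentle algebra $\Lambda$ is Gorenstein \cite{GR}, Theorem~\ref{theorem characterize of gorenstein property} applies: a $\Lambda$-module $G$ is Gorenstein projective if and only if there is an exact sequence $0\to G\to P^0\to P^1\to\cdots$ with every $P^i$ projective. In particular such a $G$ must embed into a projective module $P^0$, and the quotient $P^0/G$ must again be Gorenstein projective, so part~(i) becomes the combinatorial problem of determining which indecomposable modules admit such an iterated ``coprojective resolution''. First I would note that over a gentle algebra every indecomposable projective is a string module and submodules of string modules are direct sums of string modules; hence band modules are never Gorenstein projective, and an indecomposable Gorenstein projective $M$ embedding into a projective must be of the form $\Lambda p$ for a path $p$, sitting inside $P_{s(p)}$. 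Iterating the condition on $P_{s(p)}/\Lambda p$ and computing the next term from the gentle relations, one sees that the only indecomposables which survive all steps without becoming projective are the modules $R(\alpha)=\Lambda\alpha$ with $\alpha$ lying on a cycle $c\in\cc(\Lambda)$: if $\alpha$ lies on such a cycle, the relation $\alpha'\alpha\in I$ for the preceding arrow $\alpha'$ shows $R(\alpha)\neq P_{s(\alpha)}$, and a comparison of socles and lengths rules out $R(\alpha)$ being any other indecomposable projective, whereas for $\alpha$ off every cycle one produces either a projective with the same string or a non-Gorenstein-projective quotient after finitely many steps. This gives~(i).

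For part~(ii) I would invoke the Buchweitz--Happel equivalence $D^b_{sg}(\Lambda)\simeq\underline{\Gproj}(\Lambda)$ (valid since $\Lambda$ is Gorenstein) and compute the syzygy functor on the modules from~(i). The key computation is that for $c=\alpha_1\cdots\alpha_n\in\cc(\Lambda)$ one has $\Omega\,R(\alpha_i)\cong R(\alpha_{i-1})$, indices modulo $n$, with no projective summand: the projective cover of $R(\alpha_i)=\Lambda\alpha_i$ is $P_{s(\alpha_{i-1})}$, whose radical is $R(\alpha_{i-1})$ or $R(\alpha_{i-1})\oplus R(\delta)$; the relation $\alpha_{i-1}\alpha_i\in I$ annihilates the $R(\alpha_{i-1})$-part inside $R(\alpha_i)$, while the gentle axiom forces the second arrow $\delta$, when it exists, to be exactly the arrow continuing $\alpha_i$, so that $R(\delta)$ maps isomorphically onto $\rad R(\alpha_i)$; hence the kernel is $R(\alpha_{i-1})$. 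Consequently $\Omega$ permutes the set $\{R(\alpha_1),\dots,R(\alpha_n)\}$ cyclically and $\Omega^{\,l(c)}$ is the identity on its additive closure. After checking that there are no nonzero morphisms in $\underline{\Gproj}(\Lambda)$ between objects attached to distinct cycles, one obtains a decomposition $\underline{\Gproj}(\Lambda)=\coprod_{c\in\cc(\Lambda)}\ct_c$, where $\ct_c$ denotes the triangulated subcategory generated by $R(\alpha_1)$.

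It remains to identify each block $\ct_c$ with the triangulated orbit category $D^b(K)/[l(c)]$. For this I would compute all stable morphism spaces and find $\underline{\Hom}_\Lambda(R(\alpha_i),R(\alpha_j))\cong K$ when $j\equiv i$ and $0$ otherwise within one period, which, together with the cyclic $\Omega$-action, shows that $\ct_c$ has exactly the indecomposables, shift action and morphism structure of $D^b(K)/[l(c)]$. To promote this to a triangle equivalence, I would construct an exact functor $D^b(K)\to\ct_c$ sending the stalk $K$ to $R(\alpha_1)$ — which exists because the stable graded endomorphism algebra of $R(\alpha_1)$ is $K$ concentrated in degree $0$ — observe that it factors through $D^b(K)/[l(c)]$ since $\Omega^{\,l(c)}R(\alpha_1)\cong R(\alpha_1)$, and conclude from the morphism computation that the induced functor is essentially surjective and fully faithful. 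Assembling over $c\in\cc(\Lambda)$ gives~(ii).

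The hard part will be twofold. On the module-theoretic side, the bookkeeping in~(i) must distinguish carefully the indecomposable submodules of projectives that are themselves projective from those that are not, and must make the iterated-coresolution argument precise; in effect it amounts to an explicit string-combinatorial description of $\Gproj(\Lambda)$. On the homological side, one must upgrade the evident additive and combinatorial matching to an equivalence of \emph{triangulated} categories: this requires knowing, via Keller's work on triangulated orbit categories, that $D^b(K)/[l(c)]$ admits a triangulated structure at all, and verifying that the functor constructed above is exact, i.e. that it carries the distinguished triangles of $\underline{\Gproj}(\Lambda)$ arising from short exact sequences $0\to\Omega M\to P\to M\to 0$ of $\Lambda$-modules to triangles of the orbit category — as well as the Hom-orthogonality of objects belonging to different cycles.
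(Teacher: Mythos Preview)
The paper does not prove this theorem: it is quoted from Kalck \cite{Ka} without argument. The only ingredient the paper extracts from Kalck's proof is the family of short exact sequences
\[
0\longrightarrow R(\alpha_i)\longrightarrow P_i\longrightarrow R(\alpha_{i-1})\longrightarrow 0
\]
recorded as equation~(\ref{equation 1}). So there is no ``paper's own proof'' to compare against; your proposal is in effect a reconstruction of Kalck's argument.

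That said, your sketch recovers exactly the piece the paper needs: the syzygy computation $\Omega R(\alpha_{i-1})\cong R(\alpha_i)$ along a cycle $c\in\cc(\Lambda)$ is precisely the content of (\ref{equation 1}), and this is the engine of both (i) and (ii). A couple of points would need tightening before this becomes a proof. In (i), the claim that an indecomposable submodule of a projective over a gentle algebra ``must be of the form $\Lambda p$ for a path $p$'' is not right as stated: indecomposable projectives can be biserial, and their submodule lattice contains strings which are not uniserial left ideals. What one actually uses is that the indecomposable summands of the \emph{radical} of any projective are precisely the $R(\alpha)$, so iterating the short exact sequence above stays inside this class. In your syzygy computation you also have the indexing reversed relative to the paper's convention (the projective cover of $R(\alpha_i)$ is $P_{t(\alpha_i)}$, not $P_{s(\alpha_{i-1})}$), though this is harmless for the conclusion. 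Finally, the step promoting the additive identification $\ct_c\simeq D^b(K)/[l(c)]$ to a triangle equivalence is the genuinely delicate part; Kalck handles it by recognising each block as the stable category of $K[x]/(x^{l(c)})$, which is well known to be the orbit category in question, rather than by building an exact functor directly from $D^b(K)$.
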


From Theorem \ref{theorem Kalck}, we get that
$$\ind \Gproj(\Lambda)=\ind \proj \Lambda \bigcup \{R(\alpha_1),\dots, R(\alpha_n)|c=\alpha_1\cdots \alpha_n\in\cc(\Lambda)\}.$$
Furthermore, from its proof, let $c\in\cc(\Lambda)$ be a cycle, which we label as follows: $1\xrightarrow{\alpha_1} 2\xrightarrow{\alpha_2} \cdots \xrightarrow{\alpha_{n-1}}n\xrightarrow{\alpha_n}1$.
Then there are short exact sequences
\begin{equation}\label{equation 1}
0\rightarrow R(\alpha_{i})\xrightarrow{a_i} P_i\xrightarrow{b_i} R(\alpha_{i-1}) \rightarrow0,
\end{equation}
for all $i=1,\dots,n$.

A classification of indecomposable modules over gentle algebras can be deduced from work of Ringel \cite{R} (see e.g. \cite{BR,WW}). For each arrow $\beta$, we denote by $\beta^{-1}$ the formal inverse of $\beta$ with $s(\beta^{-1})=t(\beta)$ and $t(\beta^{-1})=s(\beta)$. A word $w=c_1c_2\cdots c_n$ of arrows and their formal inverse is called a \emph{string} of length $n\geq1$ if $c_{i+1}\neq c_i^{-1}$, $s(c_i)=t(c_{i+1})$ for all $1\leq i\leq n-1$, and no subword nor its inverse is in $I$. We define $(c_1c_2\cdots c_n)^{-1}=c_n^{-1}\cdots c_2^{-1} c_1^{-1}$, and $s(c_1c_2\cdots c_n)=s(c_n)$, $t(c_1c_2\cdots c_n)=t(c_1)$.
We denote the length of $w$ by $l(w)$.
In addition, we also want to have strings of length $0$; be definition, for any vertex $u\in Q_0$, there will be two strings of length $0$, denoted by $1_{(u,1)}$ and $1_{(u,-1)}$, with both
$s(1_{(u,i)})=u=t(1_{(u,i)})$ for $i=-1,1$, and we define $(1_{(u,i)})^{-1}=1_{(u,-i)}$.
We also denote by $\cs(\Lambda)$ the set of all strings over $\Lambda=KQ/\langle I\rangle$.

A \emph{band} $b=\alpha_1\alpha_2\cdots \alpha_{n-1} \alpha_n$ is defined to be a string $b$ with $t(\alpha_1)=s(\alpha_n)$ such that each power $b^m$ is a string, but $b$ itself is not a proper power of any strings. We denote by $\cb(\Lambda)$ the set of all bands over $\Lambda$.

On $\cs(\Lambda)$, we consider the equivalence relation $\rho$ which identifies every string $C$ with its inverse $C^{-1}$. On $\cb(\Lambda)$, we consider the equivalence relation $\rho'$ which identifies every string $C=c_1\dots c_n$ with the cyclically permuted strings $C_{(i)}=c_ic_{i+1}\cdots c_nc_1\cdots c_{i-1}$ and their inverses $C_{(i)}^{-1}$, $1\leq i\leq n$. We choose a complete set $\underline{\cs}(\Lambda)$ of representatives of $\cs(\Lambda)$ relative to $\rho$, and a complete set $\underline{\cb}(\Lambda)$ of representatives of $\cb(\Lambda)$ relative to $\rho'$.

Butler and Ringel showed that each string $w$ defines a unique string module $M(w)$, each band $b$ yields a family of band modules $M(b,m,\phi)$ with $m\geq1$ and $\phi\in \Aut(K^m)$.
Equivalently, one can consider certain quiver morphism $\sigma:S\rightarrow Q$ (for strings) and $\beta:B\rightarrow Q$ (for bands), where $S$ and $B$ are of Dynkin types $\A_n$ and $\tilde{A}_n$, respectively. Then string and band modules are given as pushforwards $\sigma_*(M)$ and $\beta_*(R)$ of indecomposable $KS$-modules $M$ and indecomposable regular $KB$-modules $R$, respectively (see e.g. \cite{WW}).
Let $\underline{\Aut}(K^m)$ be a complete set of representatives of indecomposable automorphisms of $K$-spaces with respect to similarity.

\begin{theorem}[\cite{BR}]
The modules $M(w)$, with $w\in \underline{\cs}(\Lambda)$, and the modules $M(b,m,\phi)$ with $b\in \underline{\cb}(\Lambda)$, with $b\in\underline{\cb}(\Lambda)$, $m\geq1$ and $\phi\in \underline{\Aut}(K^m)$, provide a complete list of indecomposable (and pairwise non-isomorphic) $\Lambda$-modules.
\end{theorem}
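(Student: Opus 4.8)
The statement is classical; the strategy I would follow is the functorial-filtration method of Butler--Ringel (after Gelfand--Ponomarev), which simplifies considerably in the gentle case. The first observation is that a gentle algebra is already a \emph{string algebra}: since $I$ is generated by paths of length two, $\Lambda$ is special biserial with monomial relations and has no non-uniserial projective-injective modules, so the usual reduction from special biserial to string algebras is unnecessary and the combinatorics of $\cs(\Lambda)$ and $\cb(\Lambda)$ applies verbatim. Concretely I would begin by fixing, once and for all and at every vertex, a sign function on the at most two arrows starting there and the at most two arrows ending there, arranged so that an arrow followed by a formal inverse yields an ``allowed'' letter pair exactly when the two signs disagree; this is the combinatorial scaffolding behind the definitions of strings, of the equivalence $\rho$, and of the band equivalence $\rho'$, and it is what lets one speak of a module element being a ``top'' or ``socle'' witness along a given substring.

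Next I would dispose of the easy half: each $M(w)$ with $w\in\underline{\cs}(\Lambda)$ and each $M(b,m,\phi)$ with $b\in\underline{\cb}(\Lambda)$, $m\ge 1$, $\phi\in\underline{\Aut}(K^m)$ is indecomposable, and the list has no repetitions. For the string modules one uses the canonical basis $z_0,\dots,z_{l(w)}$ indexed by the walk $w$, on which every arrow of $Q$ acts as a partial bijection or as zero; classifying homomorphisms between string modules by their quotient-sub overlaps shows $\End_\Lambda M(w)$ is local (spanned by the identity together with nilpotent self-overlap maps), so $M(w)$ is indecomposable. For the band modules, $\End_\Lambda M(b,m,\phi)$ is identified with the commutant of $\phi$ (up to nilpotents, of which there are none since $b$ is primitive), which is local precisely when $\phi$ is a single Jordan block, i.e. when $\phi\in\underline{\Aut}(K^m)$. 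Pairwise non-isomorphism is then obtained by recovering the combinatorial datum from $M$ itself: the ungraded dimension vector, the ranks and kernels of the individual arrow actions, and the radical and socle series reconstruct $w$ up to $\rho$, respectively $b$ up to $\rho'$ and $\phi$ up to similarity.

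The substance is in the hard half: any finite-dimensional indecomposable $\Lambda$-module $M$ is isomorphic to some $M(w)$ or $M(b,m,\phi)$. Here the plan is to attach to $M$, for each signed letter, a pair of subfunctors of the identity functor on $\mod\Lambda$ obtained by ``reading substrings into $M$'': to a string $v$ with a prescribed starting sign one assigns the submodule of those $x\in M$ at which $v$ can be realised (as an incoming path, respectively as an outgoing path), one checks these assignments are functorial, and one shows that extending $v$ by one further letter alters the corresponding submodule by a subquotient of dimension at most one. The string-algebra axioms guarantee that the extension is unique on each side, so the poset of substring submodules is locally a disjoint union of chains; taking a common refinement of the two families produces a filtration of $M$ whose graded pieces are one-dimensional and are glued along the arrows of $Q$ exactly according to a finite set of strings and bands. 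Since $M$ is indecomposable, Krull--Schmidt forces this set to consist either of a single string $w$, giving $M\cong M(w)$, or of a single band together with one indecomposable gluing automorphism $\phi$, giving $M\cong M(b,m,\phi)$; Jordan normal form then lets one normalise $\phi$ to lie in $\underline{\Aut}(K^m)$.

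I expect the only real obstacle to lie in this last step, and within it in the band case. One must verify with care that the substring filtrations are genuinely functorial, that they exhaust $M$, and that the common refinement of the two families really does have one-dimensional graded pieces; and when the reading process closes up into a cycle rather than terminating, one must disentangle the resulting cyclic gluing datum into a primitive band $b$ and an automorphism $\phi$ that is well-defined up to similarity. This is precisely the bookkeeping performed in \cite{BR} (see also \cite{R,WW}), to which I would refer for the details rather than reproduce it.
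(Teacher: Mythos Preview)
The paper does not give a proof of this theorem at all: it is stated as a citation of the classical result of Butler and Ringel \cite{BR} (with antecedents in \cite{R} and the pushforward description in \cite{WW}), and no argument is supplied. So there is nothing in the paper against which to compare your proposal line by line.

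That said, your outline is a faithful sketch of the functorial-filtration method actually used in \cite{BR}, and in that sense you are reproducing the ``paper's'' proof by proxy. A couple of small points of care: your remark that $\End_\Lambda M(b,m,\phi)$ is the commutant of $\phi$ ``up to nilpotents, of which there are none since $b$ is primitive'' is not quite right in general---band modules can admit nilpotent endomorphisms coming from self-overlaps of $b$ as a string---but this does not affect locality of the endomorphism ring, which is what you need. Also, the claim that gentle algebras have ``no non-uniserial projective-injective modules'' is not always true (a gentle algebra can have biserial projective-injectives), but it is irrelevant here since gentle algebras are already string algebras by definition and no reduction step is required. With those caveats, your strategy is the standard one and is correct.
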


In practice, a string $w$ is of form $\alpha_1^{\epsilon_1} \alpha_2^{\epsilon_2}\cdots \alpha_n^{\epsilon_n}$ for $\alpha_i\in Q_1$ and $\epsilon_i=\pm1$ for all $1\leq i\leq n$. So $w$ can be viewed as a walk in $Q$:
\[\xymatrix{ w:\quad 1 \ar@{-}[r]^{\quad\alpha_1} &2 \ar@{-}[r]^{\alpha_2}& \cdots \ar@{-}[r]^{\alpha_{n-1}}& n\ar@{-}[r]^{\alpha_n} &n+1,}\]
where $i\in Q_0$ are vertices of $Q$ and $\alpha_i$ are arrows in either directions. In this way, the equivalence relation $\rho$ induces that
\[\xymatrix{ w:\quad 1 \ar@{-}[r]^{\quad\alpha_1} &2 \ar@{-}[r]^{\alpha_2}& \cdots \ar@{-}[r]^{\alpha_{n-1}}& n\ar@{-}[r]^{\alpha_n} &n+1,}\]
is equivalent to
\[\xymatrix{ w^{-1}:\quad n+1 \ar@{-}[r]^{\quad\quad\alpha_n} &n \ar@{-}[r]^{\alpha_{n-1}}& \cdots \ar@{-}[r]^{\alpha_{2}}& 2\ar@{-}[r]^{\alpha_1} &1.}\]
It is similar to interpret $\rho'$ if $w$ is a band. We denote by $v\sim w$ for two strings $v,w$ if $v$ is equivalent to $w$ under $\rho$.

For any string $w=c_1\dots c_n$, or $w=1_{(u,t)}$, let $u(i)=t(c_{i+1})$, $0\leq i<n$, and $u(n)=s(w)$. Given a vertex $v\in Q_0$, let $I_v=\{ i|u(i)=v\}\subseteq\{0,1,\dots,n\}$.
Denote by $k_v=|I_v|$.
We associate a vector $(k_v )_{v\in Q_0}$ to the string $w$, this vector is denoted by $\dimv w$, and call it the \emph{dimension vector} of $w$.
From \cite{BR}, we get that $\dimv w=\dimv M(w)$.

\subsection{Quiver Grassmannians}
Quiver Grassmannians are varieties parametrizing subrepresentations of a quiver representation. They are introduced by Craw-Boevey(\cite{C-B89}), Schofield(\cite{Sch}) to study the generic properties of quiver representations.

For a finite quiver $Q$, let $A=KQ/I$ be a finite dimensional algebra.
We consider finite dimensional representations $M$ of $A$ over $K$, viewed either as finite dimensional left modules over the path algebra $A$, or as tuples
$$M=((M_i)_{i\in Q_0},(M_\alpha:M_i\rightarrow M_j)_{(\alpha:i\rightarrow j)\in Q_1})$$
consisting of finite dimensional $K$-vector spaces $M_i$ and linear maps $M_\alpha$.

For a representation $M$ of $A$, let $d=\dimv M$.
We consider the affine space
$$R_d(Q)=\bigoplus_{(\alpha:i\rightarrow j)}\Hom_K(M_i,M_j);$$
its points canonically paprametrize representations of $Q$ of dimension vector of $d$. Let $R_d(A)$ be the subvariety of $R_d(Q)$ consisting of representations annihilated by $I$, where its points canonically parametrize representations of $A$ of dimension vector $d$.
The reductive algebraic group $G_d=\prod_{i\in Q_0} \GL(M_i)$ acts naturally on $R_d(Q)$ via base change
$$(g_i)_i\cdot (M_\alpha)_\alpha=(g_jM_\alpha g_i^{-1})_{(\alpha:i\rightarrow j)},$$
such that the orbit $\co_M$ for this action naturally corresponds to the isomorphism classes $[M]$ of representations of $Q$ of dimension vector $d$.
Note that $$\dim G_d-\dim R_d(Q)=\sum_{i\in Q_0} d_i^2-\sum_{\alpha:i\rightarrow j} d_id_j=\langle d,d\rangle_Q.$$ The stabilizer under $G_d$ of a point $M\in R_d(Q)$ is isomorphic to the automorphism group $\Aut_Q(M)$ of the corresponding representation, which is a connected algebraic group of dimension $\dim \End_Q(M)$. In particular, we get
$$\dim \co_M=\dim G_d-\dim \End_Q(M).$$

The constructions and results in the following follow \cite{Sch}, see also \cite{CR,CFR12}.
Additionally to the above, fix another dimension vector $e$ such that $e\leq d$ componentwise, and define the $Q_0$-graded Grassmannian $\Gr_e(d)=\prod_{i\in Q_0}\Gr_{e_i}(M_i)$, which is a projective homogeneous space for $G_d$ of dimension $\sum_{i\in Q_0}e_i(d_i-e_i)$.
We define $\Gr_{e}^Q(d)$, the universal Grassmannian of $e$-dimensional subrepresentations of $d$-dimensional representations of $Q$ as the closed subvariety of $\Gr_e(d)\times R_d(Q)$ consisting of tuples $((U_i\subseteq M_i)_{i\in Q_0},(M_\alpha)_{\alpha\in Q_1})$ such that $M_\alpha(U_i)\subseteq U_j$ for all arrows $(\alpha:i\rightarrow j)\in Q_1$. The group $G_d$ acts on $\Gr_e^Q(d)$ diagonally, such that the projection $p_1:\Gr_e^Q(d)\rightarrow \Gr_e(d)$ and $p_2:\Gr_e^Q(d)\rightarrow R_d(Q)$ are $G_d$-equivariant. In fact, the projection $p_1$ identifies $\Gr_e^Q(d)$ as the total space of a homogeneous bundle over $\Gr_e(d)$ of rank
$$\sum_{(\alpha:i\rightarrow j)\in Q_1}(d_id_j+e_ie_j-e_id_j),$$
and $\Gr_e^Q(d)$ is smooth and irreducible of dimension
$$\dim \Gr_{e}^Q(d)=\langle e,d-e\rangle_Q+\dim R_d(Q).$$
The projection $p_2$ is proper, thus its image is a closed $G_d$-stable subvariety of $R_d$, consisting of representations admitting a subrepresentation of dimension vector $e$.

We define the \emph{quiver Grassmannian} $\Gr^Q_e(M)=p_2^{-1}(M)$ as the fibre of $p_2$ over a point $M\in R_d(Q)$; by definition, it parametrizes $e$-dimensional subrepresentations of the representation $M$.

Similarly, we define the universal quiver Grassmannian $\Gr_e^A(d)$ as the closed subset of $\Gr_e(d)\times R_d(A)$ consisting of pairs $((U_i\subseteq M_i)_{i\in Q_0},(f_\alpha)_{\alpha\in Q_1})$ such that $f_\alpha(U_i)\subseteq U_j$ for all arrows $\alpha:i\rightarrow j$. Using the projection $p_2:\Gr_e^A(d)\rightarrow R_d(A)$, we define the scheme-theoretic quiver Grassmannian $\cg r^A_e(M)=p_2^{-1}(M)$, thus by definition, $\Gr_e^A(M)$ is isomorphic to $\cg r^A_e(M)$ endowed with the reduced structure. However, for any $A$-module $M$, $A$-submodules of $M$ are the same as its $KQ$-submodules, so we also denote $\Gr_e^A(M)$ by $\Gr_e^Q(M)$ or even $\Gr_e(M)$ if there is no confusion.

\begin{theorem}[\cite{CFR13}]\label{lemma smooth of quiver Grassmannian}
Let $Q$ be a quiver, let $M$ be a representation of $KQ$ and let $e$ be a dimension vector for $Q$. Assume that $M$ is a representation for a quotient algebra $A=KQ/I$, such that the following holds: $A$ has global dimension at most $2$, both the injective and the projective dimension of $M$ over $A$ are at most one, and $\Ext^1_A(M,M)=0$. Then the quiver Grassmannian $\Gr_e(M)$ is smooth (and reduced), with irreducible and equidimensional connected components.
\end{theorem}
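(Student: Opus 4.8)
The plan is to deduce smoothness and the description of the connected components by analyzing the scheme structure of $\Gr_e(M)$ via the projection $p_2 \colon \Gr_e^A(d) \to R_d(A)$ and computing the tangent space at each point. First I would recall (following \cite{CR, CFR12, CFR13}) that for a point $N \in \Gr_e(M)$, i.e. a submodule $N \subseteq M$ of dimension vector $e$, the Zariski tangent space of the scheme-theoretic quiver Grassmannian $\cg r_e^A(M)$ at $N$ is naturally identified with $\Hom_A(N, M/N)$, while the space of obstructions lives in $\Ext^1_A(N, M/N)$. Thus the key is to control $\Ext^1_A(N, M/N)$ and to show the dimension of $\Hom_A(N, M/N)$ is locally constant, which will simultaneously give smoothness, reducedness, and equidimensionality of components.

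The main step is the vanishing/controlled-dimension argument. Apply $\Hom_A(-, M/N)$ and $\Hom_A(N, -)$ to the short exact sequence $0 \to N \to M \to M/N \to 0$. Using $\pd_A M \le 1$ gives $\Ext^{\ge 2}_A(M, -) = 0$; using $\id_A M \le 1$ gives $\Ext^{\ge 2}_A(-, M) = 0$. Combined with $\gldim A \le 2$ (so $\Ext^{\ge 3}$ vanishes identically and syzygies/cosyzygies behave well), a standard dimension-shifting/long-exact-sequence chase shows that $\Ext^2_A(M/N, N) = 0$ and, more importantly, that $\Ext^1_A(N, M/N)$ can be related to $\Ext^1_A(M, M)$. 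Concretely, from $0 \to N \to M \to M/N \to 0$ one gets exact sequences linking $\Ext^1_A(N, M/N)$, $\Ext^1_A(M, M/N)$, $\Ext^2_A(M/N, M/N)$ and $\Ext^1_A(M,M)$; the hypothesis $\Ext^1_A(M,M) = 0$ together with the dimension bounds forces $\Ext^1_A(N, M/N) = 0$. This makes $p_2$ smooth at $N$, hence $\cg r_e^A(M)$ smooth (so reduced, so equal to $\Gr_e(M)$) at every point, and the local dimension at $N$ equals $\dim \Hom_A(N, M/N)$.

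Next I would make the dimension count giving equidimensionality of each connected component. On a connected component the class $[N] \in K_0$ and $[M/N]$ are constant, and since $\pd_A N, \pd_A(M/N) \le 2$ (as $\gldim A \le 2$), the Euler form $\langle [N], [M/N]\rangle_A = \dim\Hom_A(N,M/N) - \dim\Ext^1_A(N,M/N) + \dim\Ext^2_A(N,M/N)$ is a well-defined invariant. One checks $\Ext^2_A(N, M/N) = 0$ by the same dimension-shift (using $\id_A M \le 1$ propagated to $N$ via $0 \to N \to M \to M/N \to 0$), so $\dim \Hom_A(N, M/N) = \langle [N],[M/N]\rangle_A$ is constant on the component; hence $\Gr_e(M)$ is smooth of this constant dimension on each connected component, i.e. equidimensional and irreducible there (smooth connected varieties being irreducible).

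The hard part will be the bookkeeping in the $\Ext$ long exact sequences: I need that the two \emph{different} one-sided finiteness hypotheses on $M$ ($\pd_A M \le 1$ and $\id_A M \le 1$) actually transfer to the sub/quotient $N$ and $M/N$ in exactly the places needed, and that $\gldim A \le 2$ plugs the remaining gaps so that all of $\Ext^2_A(N, M/N)$, $\Ext^2_A(M/N, N)$, and the comparison maps behave. This is precisely the mechanism isolated in \cite{CFR13}, so I expect the proof to be a careful adaptation of that argument; no genuinely new idea is needed beyond verifying the homological hypotheses line up. Once the $\Ext$-vanishing is in place, smoothness, reducedness, and equidimensionality all follow formally from deformation theory of quiver Grassmannians as recalled above.
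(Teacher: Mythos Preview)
The paper does not prove this statement; it is simply quoted from \cite{CFR13}. Your outline is correct and is essentially the argument given there: from $0\to N\to M\to M/N\to 0$ the hypotheses $\pd_A M\le 1$, $\id_A M\le 1$, $\gldim A\le 2$ force $\pd_A N\le 1$ and $\id_A(M/N)\le 1$, and then two applications of the long exact sequence combined with $\Ext^1_A(M,M)=0$ give $\Ext^1_A(N,M/N)=\Ext^2_A(N,M/N)=0$, whence smoothness and the constant tangent-space dimension $\dim\Hom_A(N,M/N)=\langle e,d-e\rangle_A$.

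Two minor corrections to your write-up. First, the bound that propagates through the short exact sequence is $\id_A(M/N)\le 1$ (from $\id_A M\le 1$) or equivalently $\pd_A N\le 1$ (from $\pd_A M\le 1$); you wrote ``$\id_A M\le 1$ propagated to $N$'', but in general only $\id_A N\le 2$ holds, which is not what you need. Second, since $K_0(A)\cong\Z^{Q_0}$ via dimension vector, the class $[N]=e$ is constant on all of $\Gr_e(M)$, not merely on each connected component, so the Euler-form computation immediately gives global equidimensionality. Neither point affects the validity of the argument.
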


\section{The functor $\Phi$}

First, we give a characterization of $1$-Gorenstein gentle algebra.

\begin{proposition}\label{proposition structure of 1-gorenstein gentle algebra}
Let $\Lambda=KQ/I$ be a finite dimensional gentle algebra. Then $\Lambda$ is $1$-Gorenstein if and only if for any arrows $\alpha,\beta$ in $Q$ satisfying $0\neq\beta\alpha\in I$, there exists $c\in \cc(\Lambda)$ such that $\alpha,\beta\in c$.
\end{proposition}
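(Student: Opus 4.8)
The plan is to reduce the Gorenstein condition to a statement about the modules $R(\alpha)$ and then read off the answer from Kalck's list of indecomposable Gorenstein projectives. Since a gentle algebra is Gorenstein by \cite{GR}, being $1$-Gorenstein means precisely $\Gd\Lambda\leq1$, which by Theorem \ref{theorem characterize of gorenstein property} is equivalent to $\Gproj(\Lambda)=\Omega(\mod\Lambda)$. The first step is the reduction: \emph{$\Lambda$ is $1$-Gorenstein if and only if $R(\alpha)\in\Gproj(\Lambda)$ for every arrow $\alpha\in Q_1$.} One direction is immediate, since every submodule of a projective module is a first syzygy and $R(\alpha)=\Lambda\alpha\subseteq P_{s(\alpha)}$; thus if $\Lambda$ is $1$-Gorenstein then $R(\alpha)\in\Omega(\mod\Lambda)=\Gproj(\Lambda)$. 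For the converse one needs the structural fact that the syzygy of \emph{any} $\Lambda$-module is a direct sum of modules of the form $R(\delta)$ ($\delta\in Q_1$) and of indecomposable projective modules; this follows from the classification of string and band modules together with an explicit computation of the projective cover of $M(w)$, where each ``valley'' of the walk $w$ contributes an indecomposable projective to the kernel and each ``free arm'' of a peak contributes a copy of some $R(\delta)$. Granting this and $R(\alpha)\in\Gproj(\Lambda)$ for all $\alpha$, one gets $\Omega(\mod\Lambda)\subseteq\Gproj(\Lambda)$; the reverse inclusion is automatic because any Gorenstein projective $G$ sits in a short exact sequence $0\to G\to P\to G'\to 0$ with $P$ projective and $G'$ Gorenstein projective, so $G\in\Omega(\mod\Lambda)$. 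Hence $\Gproj(\Lambda)=\Omega(\mod\Lambda)$, i.e.\ $\Lambda$ is $1$-Gorenstein.

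Next I would determine exactly when $R(\alpha)\in\Gproj(\Lambda)$, using two elementary observations. First, the projective cover of $R(\alpha)$ is $P_{t(\alpha)}\to\Lambda\alpha$, $e_{t(\alpha)}\mapsto\alpha$; its kernel is zero precisely when no arrow $\beta$ out of $t(\alpha)$ satisfies $\beta\alpha\in I$, so $R(\alpha)$ is projective if and only if there is no arrow $\beta$ with $\beta\alpha\in I$ (and when there is such a $\beta$, the kernel is $\Lambda\beta=R(\beta)$). Second, if $R(\alpha)$ is \emph{not} projective and $R(\alpha)\cong R(\gamma)$ for some arrow $\gamma$, then $\gamma=\alpha$: each $R(\alpha)$ is the uniserial string module attached to the directed walk read off from its composition series, and the gentle axioms --- concretely, that two distinct arrows cannot admit the same non-zero continuation --- force this walk (hence $\alpha$) to be recovered from the module, the only exception being $R(\alpha)$ simple with $t(\alpha)$ a sink, where $R(\alpha)=P_{t(\alpha)}$ is projective. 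Combining these with Theorem \ref{theorem Kalck}, which gives $\ind\Gproj(\Lambda)=\ind\proj\Lambda\cup\{R(\gamma_i)\mid \gamma_1\cdots\gamma_n\in\cc(\Lambda)\}$, one obtains: $R(\alpha)\in\Gproj(\Lambda)$ if and only if either there is no arrow $\beta$ with $\beta\alpha\in I$, or $\alpha$ itself occurs in some cycle $c\in\cc(\Lambda)$.

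It remains to match this with the statement. If $\alpha$ occurs in a cycle $c=\alpha_1\cdots\alpha_n\in\cc(\Lambda)$, then one of the defining relations $\alpha_i\alpha_{i+1}\in I$ of $c$ has the form $\beta'\alpha\in I$, where $\beta'$ is the neighbour of $\alpha$ in $c$; since $(Q,I)$ is gentle there is at most one arrow $\beta$ with $s(\beta)=t(\alpha)$ and $\beta\alpha\in I$, so any $\beta$ with $\beta\alpha\in I$ equals $\beta'$ and therefore lies in $c$. Consequently the condition ``for every arrow $\alpha$ admitting some $\beta$ with $\beta\alpha\in I$, the arrow $\alpha$ lies in some $c\in\cc(\Lambda)$'' is literally the same as ``for all arrows $\alpha,\beta$ with $0\neq\beta\alpha\in I$ there is $c\in\cc(\Lambda)$ with $\alpha,\beta\in c$'', and by the previous paragraph both are equivalent to $R(\alpha)\in\Gproj(\Lambda)$ for all $\alpha$, hence to $\Lambda$ being $1$-Gorenstein.

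The main obstacle is the structural input invoked twice above: that the syzygy of an arbitrary $\Lambda$-module decomposes into modules $R(\delta)$ and indecomposable projectives, and the rigidity statement $R(\alpha)\cong R(\gamma)\Rightarrow\alpha=\gamma$ for non-projective $R(\alpha)$. Neither is deep, but both require a careful, somewhat lengthy bookkeeping with string (and band) modules over $(Q,I)$, repeatedly using the defining gentle conditions; this combinatorial analysis, rather than any homological difficulty, is where the real work of the proof lies.
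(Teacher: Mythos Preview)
Your argument is correct and takes a genuinely different route from the paper. The paper proves both implications by direct computation of injective resolutions: for the forward direction it exhibits an indecomposable projective $P_i$ with $\id P_i>1$ when some zero-relation $\beta\alpha$ is not absorbed by a cycle in $\cc(\Lambda)$, and for the converse it writes down, case by case (according to whether the vertex lies on a cycle of $\cc(\Lambda)$ or not), an explicit length-one injective coresolution of each indecomposable projective. Your approach instead passes through the equivalence $\Gd\Lambda\le 1\Leftrightarrow\Gproj(\Lambda)=\Omega(\mod\Lambda)$, reduces this to the single module-theoretic criterion ``$R(\alpha)\in\Gproj(\Lambda)$ for all $\alpha$'', and then reads off the answer from Kalck's list together with the rigidity observation that a non-projective $R(\alpha)$ determines $\alpha$.

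Each approach has its advantages. The paper's computation is self-contained at the level of string combinatorics and yields explicit injective coresolutions of the projectives (information that is reused later in Lemma~\ref{lemma projective injective dimension of modules}). Your argument is more conceptual and makes transparent why the condition in the proposition is exactly the one singled out by Theorem~\ref{theorem Kalck}: it is precisely the condition that every $R(\alpha)$ already appears in Kalck's list of Gorenstein projectives. The price you pay is the structural input that first syzygies over a gentle algebra lie in $\add\{R(\delta)\mid\delta\in Q_1\}\cup\proj\Lambda$; this is essentially the computation underlying Gei{\ss}--Reiten \cite{GR}, so it is not new machinery, but it should be cited or sketched rather than left as ``bookkeeping''. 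Your rigidity claim is fine: if $R(\alpha)\cong R(\gamma)$ is non-simple, the first arrow $\delta_1$ of the common string satisfies $\delta_1\alpha\notin I$ and $\delta_1\gamma\notin I$, forcing $\alpha=\gamma$ by the gentle axiom; and if $R(\alpha)$ is simple but non-projective then $t(\alpha)$ has a unique outgoing arrow $\delta$ with $\delta\alpha\in I$, and a second arrow $\gamma\neq\alpha$ into $t(\alpha)$ with $R(\gamma)$ simple would force $\delta\gamma\in I$ as well, again contradicting gentleness.
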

\begin{proof}
First, we assume that $\Lambda$ is $1$-Gorenstein. If there are some arrows $\alpha,\beta$ such that $0\neq\beta\alpha\in I$, however, there is no $c\in\cc(\Lambda)$ such that $\alpha,\beta\in c$. Without losing generality, by the definition of $\cc(\Lambda)$, we can assume that there exists one arrow $\alpha_i$ satisfying that there is an arrow $\alpha_{i-1}$ such that $0\neq\alpha_i\alpha_{i-1}\in I$, but there is no arrow $\alpha_{i+1}$ such that $0\neq \alpha_{i+1}\alpha_i\in I$. Locally, the quiver $Q$ looks as the following Figure 1 shows, where the vertices can be coincided.

Note that there exists at most one arrow $\gamma_1$ starts from $i$.
We assume that $j_2,j_3$ are the ending vertices of the longest paths $\beta_s\dots\beta_1$ and $\gamma_1\dots\gamma_t$ starting from $i-1$ and $i$ respectively, with $\beta_1\neq\alpha_i$; $j_1,j_4,j_5$ are the starting vertices of the longest paths $\eta_1\dots \eta_p,\xi_1\dots\xi_q,\delta_1\dots\delta_q$ ending to $i-1$, $i$, $j_3$ respectively, with $\eta_1\neq \alpha_{i-1}$, $\xi_1\neq\alpha_i$ and $\delta_1\neq\gamma_t$.

Denote by $P_{i-1}$ and $P_i$ the indecomposable projective modules corresponding to $i-1$ and $i$ respectively, and $I_{j_3}$ the indecomposable injective module corresponding to $j_3$.
$P_{i-1}$ is a string module with string
$$\xymatrix{j_3 & \cdots\ar[l]_{\gamma_t} & i\ar[l]_{\gamma_1} & i-1\ar[l]_{\alpha_i}  \ar[r]^{\beta_1} &\cdots \ar[r]^{\beta_s}& j_2,}$$
$P_i$ is a string module with string
$$\xymatrix{i \ar[r]^{\gamma_1}&\cdots\ar[r]^{\gamma_t}& j_3. }$$
$I_{j_3}$ is a string module with string
$$\xymatrix{j_1\ar[r]^{\eta_p} &\cdots \ar[r]^{\eta_1} &i-1\ar[r]^{\alpha_i}& i\ar[r]^{\gamma_1} &\cdots \ar[r]^{\gamma_t} &j_3 & \cdots \ar[l]_{\delta_1} &j_5.\ar[l]_{\delta_u}}$$
Then there exists a short exact sequence
$$0\rightarrow P_i\rightarrow I_{j_3} \rightarrow M_1\oplus M_2\rightarrow0,$$
where $M_1,M_2$ are the string modules with their strings
$$\xymatrix{j_1\ar[r]^{\eta_p} &\cdots \ar[r]^{\eta_1} &i-1,}$$
and
$$ \xymatrix{\bullet& \cdots\ar[l]_{\delta_2} &j_5\ar[l]_{\delta_u}}$$
respectively. Note that $0\neq M_1$ is not injective, so $\id P_i>1$, and then $\Lambda$ is not $1$-Gorenstein, a contradiction.

\begin{center}
\setlength{\unitlength}{0.9mm}
\begin{picture}(60,60)
\put(-18,19){\tiny$i-2$}
\put(-10,20){\vector(1,0){10}}
\put(2,19){\tiny$i-1$}
\put(11,20){\vector(1,0){10}}
\put(22,19){$\cdots$}
\put(29,20){\vector(1,0){10}}
\put(40,19){\tiny$j_2$}
\put(6,17){\vector(0,-1){10}}
\put(5,4){\tiny$i$}
\put(7,5){\vector(1,0){10}}
\put(19,4){$\cdots$}
\put(26,5){\vector(1,0){10}}
\put(37,4){\tiny$j_3$}
\put(50,5){\vector(-1,0){10}}
\put(51,5){\circle*{1}}
\put(62,5){\vector(-1,0){10}}
\put(63,4){$\cdots$}
\put(79,5){\vector(-1,0){10}}
\put(80,4){\tiny $j_5$}

\put(2,12){\tiny$\alpha_i$}
\put(-20,2){\tiny$\xi_q$}
\put(-2,2){\tiny$\xi_1$}
\put(10,2){\tiny$\gamma_1$}
\put(30,2){\tiny$\gamma_t$}
\put(45,2){\tiny$\delta_1$}
\put(57,2){\tiny$\delta_2$}
\put(74,2){\tiny$\delta_u$}
\put(14,21){\tiny$\beta_1$}
\put(32,21){\tiny$\beta_s$}
\put(2,27){\tiny$\eta_1$}
\put(2,45){\tiny$\eta_p$}

\put(6,32){\vector(0,-1){10}}
\put(5.5,34){$\vdots$}
\put(6,49){\vector(0,-1){10}}
\put(5,51){\tiny$j_1$}
\put(-6,5){\vector(1,0){10}}
\put(-12,4){$\cdots$}
\put(-23,5){\vector(1,0){10}}
\put(-26,4){\tiny $j_4$}
\put(-8,21){\tiny$\alpha_{i-1}$}
\put(0,-10){Figure 1. The quiver of $Q$.}
\end{picture}
\vspace{1cm}
\end{center}

\begin{center}
\setlength{\unitlength}{0.9mm}
\begin{picture}(60,60)
\put(-18,19){\tiny$i-2$}
\put(-10,20){\vector(1,0){10}}
\put(2,19){\tiny$i-1$}
\put(11,20){\vector(1,0){10}}
\put(22,19){$\cdots$}
\put(29,20){\vector(1,0){10}}
\put(40,19){\tiny$j_2$}
\put(6,17){\vector(0,-1){10}}
\put(5,4){\tiny$i$}
\put(7,5){\vector(1,0){10}}
\put(19,4){$\cdots$}
\put(26,5){\vector(1,0){10}}
\put(37,4){\tiny$j_3$}
\put(50,5){\vector(-1,0){10}}
\put(50,4.5){\tiny $b$}
\put(62,5){\vector(-1,0){10}}
\put(63,4){$\cdots$}
\put(79,5){\vector(-1,0){10}}
\put(80,4){\tiny $j_5$}

\put(53,20){\vector(-1,0){10}}
\put(65,20){\vector(-1,0){10}}
\put(53,19.5){\tiny $a$}
\put(66,19){$\cdots$}
\put(82,20){\vector(-1,0){10}}
\put(83,19){\tiny$j_6$}
\put(47,21){\tiny$\rho_1$}
\put(59,21){\tiny$\rho_2$}
\put(77,21){\tiny$\rho_v$}
\put(2,12){\tiny$\alpha_i$}
\put(-20,2){\tiny$\xi_q$}
\put(-2,2){\tiny$\xi_1$}
\put(10,2){\tiny$\gamma_1$}
\put(30,2){\tiny$\gamma_t$}
\put(45,2){\tiny$\delta_1$}
\put(57,2){\tiny$\delta_2$}
\put(74,2){\tiny$\delta_u$}
\put(14,21){\tiny$\beta_1$}
\put(32,21){\tiny$\beta_s$}
\put(2,27){\tiny$\eta_1$}
\put(2,45){\tiny$\eta_p$}

\put(6,32){\vector(0,-1){10}}
\put(5.5,34){$\vdots$}
\put(6,49){\vector(0,-1){10}}
\put(5,51){\tiny$j_1$}
\put(-6,5){\vector(1,0){10}}
\put(-12,4){$\cdots$}
\put(-23,5){\vector(1,0){10}}
\put(-26,4){\tiny $j_4$}
\put(-8,21){\tiny$\alpha_{i-1}$}
\put(6,3){\vector(0,-1){10}}
\put(7,-2){\tiny$\alpha_{i+1}$}
\put(3,-9){\tiny$i+1$}
\put(0,-18){Figure 2. The quiver of $Q$.}
\end{picture}
\vspace{1.7cm}
\end{center}

Conversely, for any indecomposable projective module $P$, if the corresponding vertex $i-1$ lies at a cycle $c=\alpha_n\dots\alpha_1\in\cc(\Lambda)$, we assume the quiver $Q$ looks as the above Figure 2 shows.

Note that the vertices $j_1,\dots,j_6$ are defined as the above.
Then there exists a short exact sequence
$$0\rightarrow P_{i-1}\rightarrow I_{j_2}\oplus I_{j_3}\rightarrow I_{i-1}\oplus N_1\oplus N_2\rightarrow0,$$
where $N_1$, $N_2$ are the string modules corresponding to strings
$$ \xymatrix{a& \cdots\ar[l]_{\rho_2} &j_6,\ar[l]_{\rho_v}}$$ and
$$ \xymatrix{b& \cdots\ar[l]_{\delta_2} &j_5\ar[l]_{\delta_u}}$$
respectively.

We claim that $N_1$ and $N_2$ are injective modules. If $N_1$ is nonzero and not injective, then there is another arrow $\mu_1$ ending to $a$. Then $\rho_1 \mu_1\in I$, which implies that $\rho$ belongs to a cycle $c'\in\cc(\Lambda)$ by the assumption.
So there exists an arrow $\mu_2$ starting from $j_2$ such that $\mu_2\rho_1\in I$, we get that $\mu_2\beta_s\notin I$, so $\beta_s\dots\beta_1$ is not the longest string starting from $i-1$, a contradiction. Similarly, we can prove that $N_2$ is injective. Therefore, $\id P_{i-1}\leq 1$.

If the indecomposable projective module $P$ satisfies that its corresponding vertex $i-1$ does not lie at any cycle $c\in\cc(\Lambda)$, we assume that the quiver $Q$ looks as the following diagram shows. Note that there are at most one arrow $\alpha_i$ starting from $i-1$, and at most one arrow $\alpha_{i-1}$ ending to $i-1$.
\begin{center}
\setlength{\unitlength}{0.9mm}
\begin{picture}(100,10)
\put(-20,0){\vector(1,0){10}}
\put(-23,-1){\tiny $j_0$}
\put(-9,-1){$\cdots$}
\put(-3,0){\vector(1,0){10}}
\put(8,-1){\tiny $i-2$}
\put(15,0){\vector(1,0){10}}
\put(26,-1){\tiny$i-1$}
\put(33,0){\vector(1,0){10}}
\put(44,-1){\tiny $i$}

\put(46,0){\vector(1,0){10}}
\put(57,-1){$\cdots$}
\put(63,0){\vector(1,0){10}}
\put(74,-1){\tiny $n$}
\put(86,0){\vector(-1,0){10}}
\put(87,-1){\tiny$a$}
\put(99,0){\vector(-1,0){10}}
\put(100,-1){$\cdots$}
\put(116,0){\vector(-1,0){10}}
\put(117,-1){\tiny $j_1$}

\put(-17,1){\tiny $\alpha_1$}
\put(-2,1){\tiny $\alpha_{i-2}$}
\put(16,1){\tiny $\alpha_{i-1}$}
\put(35,1){\tiny $\alpha_i$}
\put(47,1){\tiny $\alpha_{i+1}$}
\put(65,1){\tiny $\alpha_n$}
\put(80,1){\tiny $\beta_1$}
\put(93,1){\tiny $\beta_2$}
\put(110,1){\tiny $\beta_s$}
\put(25,-10){Figure 3. The quiver of $Q$.}
\end{picture}
\vspace{1cm}
\end{center}
Denote by $j_0$ the starting vertex of the longest path $\alpha_{i-1}\dots\alpha_1$, and $n$ the ending vertex of the longest path $\alpha_n\dots\alpha_i$.
Also $j_1$ is the starting vertex of the longest path $\beta_1\dots\beta_s$ ending to $n$.
So $P_{i-1}$ is the string module with string
$$\xymatrix{i-1\ar[r]^{\alpha_i}& i\ar[r]^{\alpha_{i+1}} &\cdots\ar[r]^{\alpha_n}&n.}$$
$I_n$ is the string module with string as Figure 3 shows.
So there is a short exact sequence
$$0\rightarrow P_{i-1}\rightarrow I_n\rightarrow L_1\oplus L_2\rightarrow0,$$
where $L_1$ is the string module with string
$$\xymatrix{j_0\ar[r]^{\alpha_1}&\cdots\ar[r]^{\alpha_{i-2}}&i-2,}$$
and $L_2$ is the string module with string
$$\xymatrix{j_1\ar[r]^{\beta_s}&\cdots \ar[r]^{\beta_2}& a.}$$

We claim that $L_1$ and $L_2$ are injective modules.
If $L_2$ is nonzero and not injective, then there exists one arrow $\gamma_1$ ending to $a$, which implies that $\beta_1\gamma_1\in I$ since $\beta_1\beta_2\notin I$.
By the assumption, we know that there exists one arrow $\gamma_2$ starting from $n$ such that $\gamma_2\beta_1\in I$, then $\gamma_2\alpha_n\notin I$, contradicts to $\alpha_n\dots\alpha_i$ is the longest path starting from $i-1$. So $L_2$ is injective. Similarly, we can prove that $L_1$ is injective. So we get that $\id P_{i-1}\leq1$.

To sum up, for any indecomposable projective module $P$, we get that $\id P\leq1$, which yields that $\id \Lambda\leq1$. Since $\Lambda$ is Gorenstein, we get that $\Lambda$ is $1$-Gorenstein.
\end{proof}

From now on, we only consider gentle algebras $\Lambda$ which is $1$-Gorenstein.

For any gentle algebra $\Lambda$, from \cite{CL2}, we know that $\Lambda$ is CM-finite, and the Cohen-Macaulay Auslander algebra $\Aus(\Gproj \Lambda)=KQ^{Aus}/\langle I^{Aus}\rangle$ is the algebra with the bound quiver $(Q^{Aus},I^{Aus})$ defined as follows:

$\bullet$ the set of vertices $Q^{Aus}_0:=Q_0\bigsqcup Q_1^{cyc}$, where $Q_1^{cyc}=\{\alpha| \alpha\in\cc(\Lambda)\}$;

$\bullet$ the set of arrows $Q^{Aus}_1:=Q_1^{ncyc}\bigsqcup (Q_1^{cyc})^{\pm}$, where $Q_1^{ncyc}=Q_1\setminus Q_1^{cyc}$, i.e. arrows do not appear in any cyclic paths in $\cc(\Lambda)$, $(Q_1^{cyc})^{+}=\{\alpha^+:s(\alpha)\rightarrow \alpha | \alpha\in Q_1^{cyc} \}$ and
$(Q_1^{cyc})^{-}=\{\alpha^-:\alpha\rightarrow t(\alpha) | \alpha\in Q_1^{cyc} \}$.

\noindent The ideal $I^{Aus}:= \{\beta^+\alpha^-| \beta\alpha \in I,\alpha,\beta\in Q_1^{cyc}\}\bigcup\{\beta\alpha|\beta\alpha\in I,\alpha,\beta\in Q_1^{ncyc}\}$.

Before going on, let us fix some notations.
Let $\Lambda$ be a gentle algebra and $\Gamma$ be its Cohen-Macaulay Auslander algebra.

For any $M=((M_i)_{i\in Q_0},(M_\alpha:M_i\rightarrow M_j)_{(\alpha:i\rightarrow j)\in Q_1})\in\mod \Lambda$,
Define a $\Gamma$-module $\widehat{M}=((N_i,N_\alpha)_{i\in Q_0,\alpha\in Q_1^{cyc}},(N_\beta)_{\beta \in Q_1^{Aus}})$ as follows:

$\bullet$ For any $i\in Q_0\subseteq Q_0^{Aus}$, we set $N_i=M_i$; For any $\alpha\in Q_1^{cyc}\subseteq Q_1^{Aus}$, we set $N_\alpha=\Im M_\alpha$.

$\bullet$ For any arrow in $Q_1^{Aus}$, if it is of form $(\beta:i\rightarrow j)\in Q_1^{ncyc}$, then we set $N_\beta=M_\beta$; if it is of form $\beta^+:i\rightarrow \beta$, or of form
$\beta^-:\beta\rightarrow j$ for some $(\beta:i\rightarrow j)\in Q_1^{cyc}$, we set $N_{\beta^+}$ and $N_{\beta^-}$ to be the natural morphisms $(N_i=M_i)\rightarrow (\Im M_\beta=N_{\beta})$ and $(N_\beta=\Im M_\beta)\rightarrow (M_j=N_j)$ respectively induced by $M_\beta:M_i\rightarrow M_j$.

It is easy to see that $\widehat{M}$ is actually a $\Gamma$-module. Since $\Im$ is a functor, we can define a functor $\Phi:\mod \Lambda\rightarrow \mod \Gamma$ such that $\Phi(M):=\widehat{M}$, with the natural definition on morphisms.

Since $(Q,I)$ is a subquiver of $(Q^{Aus},I^{Aus})$, i.e. $\Lambda$ is a subalgebra of $\Gamma$, we get a restriction functor $\res:\mod \Gamma\rightarrow \mod\Lambda$.
Explicitly, for any $N=((N_i,N_\alpha)_{i\in Q_0,\alpha\in Q_1^{cyc}},(N_\beta)_{\beta \in Q_1^{Aus}})\in\mod \Gamma$, $\res(N)$ is defined as follows:

$\bullet$ For any $i\in Q_0$, $(\res(N))_i=N_i$;

$\bullet$ For any arrow $(\alpha:i\rightarrow j)\in Q_1$, if $\alpha\in Q_1^{ncyc}$, we set $(\res N)_\alpha=N_\alpha$; if $\alpha\in Q_1^{cyc}$, we set $(\res(N))_\alpha=N_{\alpha^-}N_{\alpha^+}$.

\begin{lemma}
We have $\res(\Phi(M))\simeq M$ naturally.
\end{lemma}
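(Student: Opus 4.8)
The plan is to unwind both functors on a module $M\in\mod\Lambda$ and check that the resulting $\Lambda$-module agrees with $M$ componentwise and on every arrow. First I would compute $\res(\Phi(M))$ vertex by vertex: for $i\in Q_0$ we have $(\Phi(M))_i=M_i$ by construction, and the restriction does not change vertex spaces, so $(\res(\Phi(M)))_i=M_i$. This already gives the required identification on objects, and I would take this identification to be the identity map on each $M_i$.

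Next I would check the arrows. Fix an arrow $\alpha:i\rightarrow j$ in $Q$. If $\alpha\in Q_1^{ncyc}$, then by definition $N_\alpha=M_\alpha$ in $\Phi(M)$, and $\res$ leaves such arrows untouched, so $(\res(\Phi(M)))_\alpha=M_\alpha$. If $\alpha\in Q_1^{cyc}$, then in $\Phi(M)$ the arrow $\alpha$ is replaced by the pair $\alpha^+:M_i\rightarrow \Im M_\alpha$ and $\alpha^-:\Im M_\alpha\rightarrow M_j$, which are exactly the canonical surjection onto the image followed by the canonical inclusion; and $\res$ is defined precisely so that $(\res(\Phi(M)))_\alpha=N_{\alpha^-}N_{\alpha^+}$. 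Since the composite of the corestriction-to-image with the inclusion-of-image is the original map, $N_{\alpha^-}N_{\alpha^+}=M_\alpha$. Hence on every arrow the structure maps coincide, so $\res(\Phi(M))=M$ as $\Lambda$-modules under the identity identification.

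Finally I would address naturality: given a morphism $f:M\rightarrow M'$ in $\mod\Lambda$, the functor $\Phi$ acts on $f$ by keeping the components $f_i$ at vertices $i\in Q_0$ and using the induced maps $\Im M_\alpha\rightarrow \Im M'_\alpha$ at the new vertices $\alpha\in Q_1^{cyc}$; applying $\res$ then forgets the latter and returns the tuple $(f_i)_{i\in Q_0}$, which is $f$ itself. So the isomorphism $\res(\Phi(M))\simeq M$ is the identity, visibly natural in $M$. I do not expect any genuine obstacle here — the statement is essentially a bookkeeping verification that ``take images then compose'' undoes ``factor each cyclic arrow through its image''. The only point requiring a line of care is that the factorization $M_\alpha=(\text{inclusion})\circ(\text{corestriction})$ is literally an equality of linear maps, not merely an isomorphism, which makes the identification strict rather than just up to isomorphism; one should also note in passing that $\Phi(M)$ genuinely lands in $\mod\Gamma$, i.e.\ the relations in $I^{Aus}$ are satisfied, but this has already been recorded just above the lemma.
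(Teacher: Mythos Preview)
Your proof is correct and follows exactly the approach the paper intends: the paper's own proof is the single line ``This follows from the definition of the functors $\Phi$ and $\res$ immediately,'' and you have simply spelled out that immediate verification in full detail. There is nothing to add.
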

\begin{proof}
This follows from the definition of the functors $\Phi$ and $\res$ immediately.
\end{proof}
Similar to \cite[Lemma 5.3]{CFR13}, we also have the following weak adjunction properties:

\begin{lemma}\label{lemma property of phi}
(i) For all $M\in\mod \Lambda$ and $N\in\mod \Gamma$, the natural maps
$$\Hom_\Gamma(\widehat{M},N)\xrightarrow{\res} \Hom_\Lambda(\res \widehat{M},\res N)\simeq \Hom_\Lambda(M,\res N)$$
and
$$\Hom_\Gamma(N,\widehat{M})\xrightarrow{\res}\Hom_\Lambda(\res N,\res\widehat{M})\simeq \Hom_{\Lambda}(\res N,M)$$
are injective.

(ii) The functor $\Phi$ is fully faithful. In particular, $\Phi$ preserves injective and surjective morphisms.
\end{lemma}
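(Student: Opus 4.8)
The two parts have different flavors, so I would treat them separately. For part (i), the claim is purely about injectivity of the restriction map on Hom-spaces, and I would prove it by exhibiting explicit, componentwise sections. Start with $\Hom_\Gamma(\widehat M, N)$: a morphism $f:\widehat M\to N$ is a tuple of linear maps $(f_i)_{i\in Q_0}$ together with maps $(f_\alpha)_{\alpha\in Q_1^{cyc}}$ on the extra vertices, subject to the commutativity relations imposed by the arrows $\alpha^+,\alpha^-$ and the non-cyclic arrows. The composite with $\res$ simply forgets the components $f_\alpha$ on the vertices of $Q_1^{cyc}$ and records $(f_i)_{i\in Q_0}$. So the key point is that, for $\alpha\in Q_1^{cyc}$, the component $f_\alpha:\widehat M_\alpha=\Im M_\alpha\to N_\alpha$ is forced by the $(f_i)_{i\in Q_0}$: indeed $N_{\alpha^+}\colon N_{s(\alpha)}\to N_\alpha$ need not be surjective in $N$, but in $\widehat M$ the map $\widehat M_{\alpha^+}\colon M_{s(\alpha)}\twoheadrightarrow \Im M_\alpha$ \emph{is} surjective by construction, so $f_\alpha$ is determined on a spanning set by $f_\alpha\circ \widehat M_{\alpha^+}=N_{\alpha^+}\circ f_{s(\alpha)}$; hence if $(f_i)_{i\in Q_0}=0$ then $f_\alpha=0$ as well, giving injectivity. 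For the second map, $\Hom_\Gamma(N,\widehat M)$, one argues dually: for $\alpha\in Q_1^{cyc}$ the map $\widehat M_{\alpha^-}\colon \Im M_\alpha\hookrightarrow M_{t(\alpha)}$ is \emph{injective}, and from $\widehat M_{\alpha^-}\circ g_\alpha = g_{t(\alpha)}\circ N_{\alpha^-}$ one sees $g_\alpha$ is determined by $g_{t(\alpha)}$ (it is the corestriction), so again $g_\alpha=0$ whenever all $g_i=0$. In both cases the identification with $\Hom_\Lambda(M,\res N)$ (resp.\ $\Hom_\Lambda(\res N,M)$) is the natural one coming from $\res\widehat M\simeq M$ proved in the previous lemma.

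For part (ii), fullness and faithfulness of $\Phi$: faithfulness is immediate since $\res\circ\Phi\simeq\Id$ by the preceding lemma (a functor with a retraction is faithful), or alternatively it is the injectivity of the first map in (i) applied with $N=\widehat{M'}$. For fullness, I would take $M,M'\in\mod\Lambda$ and a morphism $f=\res(g)$ is what we must realize; more precisely, given $h\colon M\to M'$ in $\mod\Lambda$, I want to produce $g\colon \widehat M\to\widehat M'$ with $\res(g)=h$. Set $g_i=h_i$ for $i\in Q_0$. For $\alpha\in Q_1^{cyc}$ one must define $g_\alpha\colon \Im M_\alpha\to \Im M'_\alpha$; the naturality of the image construction forces (and allows) $g_\alpha$ to be the restriction-corestriction of $h_{s(\alpha)}$ (equivalently of $h_{t(\alpha)}$) to the images, which is well-defined precisely because $h$ is a $\Lambda$-morphism, i.e.\ $M'_\alpha\circ h_{s(\alpha)}=h_{t(\alpha)}\circ M_\alpha$ carries $\Im M_\alpha$ into $\Im M'_\alpha$. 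Then one checks that this $g$ commutes with all arrows $\beta^+,\beta^-$ and with the non-cyclic arrows, which is routine from the defining square for $h$. Combined with injectivity from (i), this shows $\res$ restricts to a bijection $\Hom_\Gamma(\widehat M,\widehat{M'})\xrightarrow{\sim}\Hom_\Lambda(M,M')$, i.e.\ $\Phi$ is fully faithful. The final sentence --- that $\Phi$ preserves monos and epis --- then follows formally: a fully faithful functor that, like $\Phi$ here, also happens to be exact-on-the-nose componentwise reflects and preserves kernels and cokernels; concretely, $\Phi(h)$ is injective (resp.\ surjective) iff $h$ is, because on the $Q_0$-components $\Phi(h)$ agrees with $h$ and on the $Q_1^{cyc}$-components it is the induced map on images, which is injective (resp.\ surjective) as soon as $h$ is.

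The only genuinely delicate point is the well-definedness and arrow-compatibility of $g_\alpha$ on the cyclic vertices in the fullness argument --- one has to make sure the two a priori different descriptions (via $s(\alpha)$ and via $t(\alpha)$) agree and that the resulting tuple satisfies the relations in $I^{Aus}$; but since $I^{Aus}$ only involves compositions $\beta^+\alpha^-$ with $\beta\alpha\in I$ and $M,M'$ already kill $\beta\alpha$, this is automatic. Everything else is bookkeeping with commutative squares, exactly parallel to \cite[Lemma 5.3]{CFR13}, so I would simply say ``analogous to \cite[Lemma 5.3]{CFR13}'' for the verifications and spell out only the image-surjectivity/injectivity observation, which is the one place the gentle-algebra structure (the $\Im M_\alpha$ construction) is used.
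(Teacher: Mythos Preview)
Your proof is correct and follows essentially the same approach as the paper. For (i) you use exactly the paper's key observation: the surjectivity of $\widehat{M}_{\alpha^+}:M_{s(\alpha)}\twoheadrightarrow\Im M_\alpha$ forces $f_\alpha$ to be determined by $f_{s(\alpha)}$, and dually the injectivity of $\widehat{M}_{\alpha^-}$ handles the second map. For (ii) the paper packages the argument slightly more tersely: rather than spelling out the lift $g$ of $h$ explicitly, it simply observes that the chain $\Hom_\Lambda(M,N)\xrightarrow{\Phi}\Hom_\Gamma(\widehat M,\widehat N)\xrightarrow{\res}\Hom_\Lambda(M,N)$ composes to the identity, so $\Phi$ is injective and $\res$ is surjective; combined with the injectivity of $\res$ from (i), both maps are bijections. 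Your explicit construction of $g$ is of course nothing other than $\Phi(h)$, so the two arguments coincide once unpacked.
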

\begin{proof}
(i) For any morphism $f:\widehat{M}\rightarrow N$, then it is of form $(f_i)_{i\in Q_0}\bigcup(f_\alpha)_{\alpha\in Q_1^{cyc}}$. If $\res(f)=0$, then $\res(f)=(f_i)_{i\in Q_0}=0$. Let $M=((M_i)_{i\in Q_0},(M_\alpha)_{(\alpha:i\rightarrow j)\in Q_1})$ and $N=((N_i)_{i\in Q_0}\cup(N_\alpha)_{\alpha\in Q_1},(N_\beta)_{\beta\in Q_1^{Aus}})$.
For any $(\alpha:i\rightarrow j)\in Q_1^{cyc}$, since $f$ is a morphism, we get the following commutative diagram
$$\xymatrix{ M_i\ar[r] \ar[d]^{f_i}& \Im M_\alpha \ar[r]\ar[d]^{f_\alpha}& M_j \ar[d]^{f_j}\\
N_i\ar[r]^{N_{\alpha^+}} &N_\alpha \ar[r]^{N_{\alpha^-}} &N_j.}$$
Since $f_i=f_j=0$ and the first map in the upper row is surjective, we get that $f_\alpha=0$.
So $\Hom_\Gamma(\widehat{M},N)\xrightarrow{\res} \Hom_\Lambda(\res \widehat{M},\res N)$ is injective. The second statement can be proved dually.

(ii) For all $M,N$ in $\mod \Lambda$, we have a chain of maps
$$\Hom_{\Lambda}(M,N)\xrightarrow{\Phi}\Hom_{\Gamma}(\widehat{M},\widehat{N})\xrightarrow{\res}\Hom_\Lambda(\res\widehat{M},\res\widehat{N})\simeq \Hom_\Lambda(M,N),$$
whose composition is the identity, thus the first map is injective. The second map is injective due to (i). So $\Phi$ is fully faithful.
\end{proof}

Since $\Lambda$ and $\Gamma$ are gentle algebras, their indecomposable modules are either string modules or band modules. We describe the action of $\Phi$ and $\res$ on string modules as follows.

$\bullet$ For a string $w=\alpha_1^{\epsilon_1}\alpha_2^{\epsilon_2}\dots\alpha_n^{\epsilon_n} \in\cs(\Lambda)$, denote the string module by $M(w)$. For $i=1,\dots,n$, if $\alpha_i\in Q_1^{cyc}$, we replace $\alpha_i$ by $\alpha_i^-\alpha_i^+$, and get a word in $\Gamma$, denote it by $\iota(w)$. Then it is easy to see that $\iota(w)\in \cs(\Gamma)$, we denote its string module by $N(\iota(w))$. Note that
$$\dimv N(\iota(w))=\dimv M(w)+\sum_{\alpha_i\in Q_1^{cyc}} \dimv S_{\alpha_i},$$
where $S_{\alpha_i}$ is the simple module corresponding to $\alpha_i\in Q_1^{cyc}\subseteq Q_0^{Aus}$.
In this way, we get a map $\iota:\cs(\Lambda)\rightarrow \cs(\Gamma)$, which is injective. It is easy to see that $\Phi(M(w))=N(\iota(w))$.

$\bullet$ For a string $v=\beta_1\beta_2\dots\beta_n \in\cs(\Gamma)$, denote the string module by $N(v)$. Denote by $v'$ the longest substring of $v$ such that $s(v'),t(v')\in Q_0\subseteq Q_0^{Aus}$. Note that $l(v)-l(v')\leq2$ and $l(v'')-l(v)\leq2$. If $\alpha^-\alpha^+$ (or its inverse) appears as a subword of $v'$ for any arrow $\alpha\in Q_1^{cyc}$, we replace $\alpha^-\alpha^+$ (or its inverse) by $\alpha$ (or $\alpha^{-1}$), after doing this repeatedly, finally we can get a word in $\Lambda$, denote it by $\pi^-(v)$. Then it is easy to see that $\pi^-(v)\in \cs(\Lambda)$, we denote its string module by $M(\pi^-(v))$. Note that if $\dim N(v)=(k_i)_{i\in Q_0}+(k_\alpha)_{\alpha\in Q_1^{cyc}}$, then $\dim M(\pi^-(v))=(k_i)_{i\in Q_0}$.
In this way, we get a surjective map $\pi^-:\cs(\Gamma)\rightarrow \cs(\Lambda)$, in fact, $\pi^-\iota=\Id$. Easily, $\res(N(v))=M(\pi^-(v))$.

\begin{lemma}\label{lemma Phi preserves projectives and injectives}
Let $\Lambda$ be a gentle algebra, and $\Gamma$ be its Cohen-Macaulay Auslander algebra. Then for any projective $\Lambda$-module $P$, and any injective $\Lambda$-module $I$, we have
$\Phi(P)$ is a projective $\Gamma$-module, and $\Phi(I)$ is an injective $\Gamma$-module. In other words, $\Phi$ preserves projectives and injectives.
\end{lemma}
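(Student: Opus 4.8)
The plan is to verify the claim on indecomposable summands, using the explicit description of $\ind\proj\Lambda$ and $\ind\mathrm{inj}\,\Lambda$ as string modules together with the combinatorial recipe for $\Phi$ on string modules recalled just above the lemma. Since $\Phi$ is additive (it is a functor defined componentwise via $\Im$), it suffices to show $\Phi(P_v)$ is projective for each indecomposable projective $\Lambda$-module $P_v=\Lambda e_v$ and, dually, $\Phi(I_v)$ is injective for each indecomposable injective $I_v$. Because $\Lambda$ is gentle, $P_v$ is a string module $M(w_P)$ where $w_P$ is obtained by reading the (at most two) maximal paths out of $v$; similarly $I_v=M(w_I)$ for a string built from the maximal paths into $v$.

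First I would analyze the effect of $\iota$ (hence of $\Phi$) on the string $w_P$ of $P_v$. The key point is that $w_P$ is a \emph{direct} string (a genuine path, no formal inverses, or a union of two paths meeting only at the source $v$), and each time an arrow $\alpha\in Q_1^{cyc}$ occurs in $w_P$ it gets replaced by $\alpha^-\alpha^+$. One checks directly from the definition of $(Q^{Aus},I^{Aus})$ that the resulting string $\iota(w_P)$ is exactly the string of the indecomposable projective $\Gamma$-module $P^{Aus}_v=\Gamma e_v$ at the vertex $v\in Q_0\subseteq Q_0^{Aus}$: indeed the arrows out of $v$ in $Q^{Aus}$ are $\beta$ (for $\beta\in Q_1^{ncyc}$, $s(\beta)=v$) and $\alpha^+$ (for $\alpha\in Q_1^{cyc}$, $s(\alpha)=v$), and the relations in $I^{Aus}$ — namely $\beta^+\alpha^-$ for $\beta\alpha\in I$ with $\alpha,\beta\in Q_1^{cyc}$, and $\beta\alpha$ for $\beta\alpha\in I$ with $\alpha,\beta\in Q_1^{ncyc}$ — are precisely engineered so that a maximal path of $\Lambda$ starting at $v$, with each cyclic arrow $\alpha$ blown up to $\alpha^-\alpha^+$, becomes a maximal path of $\Gamma$ starting at $v$. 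So $\Phi(P_v)=N(\iota(w_P))\cong P^{Aus}_v$, which is projective. Here I would also use Proposition~\ref{proposition structure of 1-gorenstein gentle algebra}: $1$-Gorensteinness guarantees that whenever $\beta\alpha\in I$ with $\beta\alpha\neq 0$ then $\alpha,\beta$ lie in a common cycle of $\cc(\Lambda)$, so the relation is "split" in $I^{Aus}$ exactly as $\beta^+\alpha^-$, and no path gets unexpectedly shortened; this is what makes the maximal-path-to-maximal-path matching work.

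For the injective statement I would argue dually. Writing $I_v=M(w_I)$ with $w_I$ the string read off from the maximal paths \emph{into} $v$, applying $\iota$ blows up each cyclic arrow, and one checks $\iota(w_I)$ is the string of the indecomposable injective $\Gamma$-module at $v$, using the description of arrows into $v$ in $Q^{Aus}$ (these are $\beta$ with $t(\beta)=v$, $\beta\in Q_1^{ncyc}$, and $\alpha^-$ with $t(\alpha)=v$, $\alpha\in Q_1^{cyc}$) and the same relations in $I^{Aus}$ read from the other side. Alternatively, and perhaps more cleanly, one can transport the projective statement through the dualities: $D=\Hom_K(-,K)$ gives $D(\mathrm{inj}\,\Lambda)=\proj\Lambda^{op}$, $\Lambda^{op}$ is again a $1$-Gorenstein gentle algebra, its Cohen-Macaulay Auslander algebra is $\Gamma^{op}$, and $\Phi$ is compatible with $D$ up to the obvious identifications (since $\Im$ commutes with $D$); then the injective case follows from the projective case applied to $\Lambda^{op}$. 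I expect the main obstacle to be bookkeeping: making the identification $\iota(w_P)=$ "string of $P^{Aus}_v$" airtight requires carefully tracking what happens at the special vertices $Q_1^{cyc}\subseteq Q_0^{Aus}$ inserted along the string, checking the two-branches-at-the-source case for projectives, and confirming that the length bounds and relation structure of $(Q^{Aus},I^{Aus})$ leave the maximal strings genuinely maximal. The dual route via $\Lambda^{op}$ reduces the work to verifying the single compatibility $D\circ\Phi_\Lambda\cong\Phi_{\Lambda^{op}}\circ D$, which I would spell out at the level of the componentwise definition of $\Phi$ and then invoke the already-proven projective case.
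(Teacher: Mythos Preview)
Your approach is essentially the paper's: reduce to indecomposable projectives, write each $P_v$ as a string module built from one or two maximal directed paths out of $v$, and verify that $\iota$ carries such a string to the string of the indecomposable projective $\Gamma$-module at $v\in Q_0\subseteq Q_0^{Aus}$ by checking maximality at the far end and the arrow structure at the source; injectives are then handled dually (the paper just says ``dual,'' your $D$-compatibility argument is a clean way to formalize that). One remark: your appeal to Proposition~\ref{proposition structure of 1-gorenstein gentle algebra} is not actually needed---the maximal-path-to-maximal-path matching works for any gentle algebra, because gentleness alone (uniqueness of the arrow appearing in a given zero relation) already forces that if $\beta\alpha\in I$ with one of $\alpha,\beta$ cyclic then both are, so the relations in $I^{Aus}$ always account for every relation of $I$ and $\iota(w_P)$ is never cut short.
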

\begin{proof}
From \cite{Ka}, we get that the indecomposable projective $\Lambda$-modules $P$ are of the following form:

\begin{center}
\setlength{\unitlength}{0.9mm}
\begin{picture}(100,50)
\put(0,50){\circle*{1}}
\put(0,50){\vector(0,-1){9}}
\put(0,40){\circle*{1}}
\put(0,40){\vector(0,-1){9}}
\put(0,30){\circle*{1}}
\put(-0.5,23){$\vdots$}
\put(0,20){\circle*{1}}
\put(0,20){\vector(0,-1){9}}
\put(0,10){\circle*{1}}

\put(-4,45){\tiny$\alpha_1$}
\put(-4,35){\tiny$\alpha_2$}
\put(-4,15){\tiny$\alpha_l$}

\put(20,30){$\mbox{or}$}

\put(20,10){\circle*{1}}

\put(30,20){\vector(-1,-1){9}}
\put(30,20){\circle*{1}}

\put(31,21){$\cdot$}
\put(33,23){$\cdot$}
\put(32,22){$\cdot$}
\put(36,26){\circle*{1}}
\put(46,36){\circle*{1}}
\put(46,36){\vector(-1,-1){9}}
\put(56,46){\circle*{1}}
\put(56,46){\vector(-1,-1){9}}

\put(56,46){\vector(1,-1){9}}
\put(66,36){\circle*{1}}
\put(67,33){$\cdot$}
\put(68,32){$\cdot$}
\put(69,31){$\cdot$}
\put(72,30){\circle*{1}}
\put(72,30){\vector(1,-1){9}}
\put(82,20){\circle*{1}}

\put(21,16){\tiny$\beta_m$}
\put(38,33){\tiny$\beta_2$}
\put(48,43){\tiny$\beta_1$}
\put(60,43){\tiny$\gamma_1$}
\put(77,26){\tiny$\gamma_n$}

\put(0,0){Figure 3. The strings of projective modules.}
\end{picture}
\end{center}

They correspond to the strings $\alpha_l\dots \alpha_1$ and $\beta_m\dots \beta_1\gamma_1^{-1}\dots \gamma_n^{-1}$, respectively. In particular, $\alpha_l\dots \alpha_1$, $\beta_m\dots\beta_1$ and $\gamma_n\dots \gamma_1$ are maximal, e.g. there does not exist $\alpha\in Q_1$ such that $\alpha\alpha_l\notin I$.

In the first case, $\iota(\alpha_l\dots\alpha_1)\in\cs(\Lambda)$ is also maximal. In fact, if there exists an arrow in $Q_1^{Aus}$, which is either of form $\xi\in Q_1\subseteq Q_1^{Aus}$ or of form $\xi^{+}$ for some $\xi\in Q_1$, such that $\xi\iota(\alpha_l\dots \alpha_1)$ or $\xi^+\iota(\alpha_l\dots\alpha_1)$ is also a string, then
$\xi\alpha_l\notin I$ in both cases, so $\xi\alpha_l\dots \alpha_1$ is a string in $\cs(\Lambda)$, a contradiction.

Note that we can view $s(\alpha_1)$ to be a vertex in $Q_0^{Aus}$. From the structure of $Q^{Aus}$, there is no other arrow than $\alpha_1$ (if $\alpha_1\in Q_1^{ncyc}$ ) or $\alpha_1^+$ (if $\alpha_1\in Q_1^{cyc}$) starting from $s(\alpha_1)$. So $\iota(\alpha_l\dots\alpha_1)$ is the string of the indecomposable projective $\Gamma$-module corresponding to the vertex $s(\alpha_1)$ for $\Gamma$ is gentle.

The second case is similar to the first one, we omit the proof here.

For $\Phi$ preserves injectives, it is dual to the above, we omit the proof here.
\end{proof}

\begin{lemma}\label{lemma projective injective dimension of modules}
Let $\Lambda$ be a $1$-Gorenstein gentle algebra, and $\Gamma$ be its Cohen-Macaulay Auslander algebra. Then for any $M\in\Gproj \Lambda$, $\pd \Phi(M)\leq1$ and $\id \Phi(M)\leq 1$.
\end{lemma}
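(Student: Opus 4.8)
The plan is to reduce everything to the two main structural facts already established: Theorem \ref{theorem characterize of gorenstein property} (which characterizes Gorenstein projective modules over $1$-Gorenstein algebras via embeddings into projectives, and via syzygies) and Lemma \ref{lemma Phi preserves projectives and injectives} (which says $\Phi$ sends projective $\Lambda$-modules to projective $\Gamma$-modules and injective $\Lambda$-modules to injective $\Gamma$-modules). The key additional ingredient I would need is that $\Phi$ is exact on the relevant short exact sequences, or at least sends the distinguished sequences (\ref{equation 1}) to short exact sequences of $\Gamma$-modules; more precisely, since $\Phi$ replaces each map along a cyclic arrow by its epi-mono factorization, $\Phi$ is exact on short exact sequences $0\to A\to B\to C\to 0$ for which the maps behave well with respect to images along cyclic arrows — and for $M\in\Gproj\Lambda$ the relevant sequences are built from projectives and the modules $R(\alpha)$, which by Theorem \ref{theorem Kalck}(i) are the only non-projective indecomposable Gorenstein projectives.

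First I would treat $\id\Phi(M)\leq 1$. By Theorem \ref{theorem characterize of gorenstein property}, since $\Lambda$ is $1$-Gorenstein, every $M\in\Gproj\Lambda$ fits into a short exact sequence $0\to M\to P^0\to P^1\to 0$ with $P^0,P^1$ projective; indeed $\Gproj\Lambda=\Omega^1(\mod\Lambda)$ combined with the exactness of injective coresolutions of Gorenstein projectives gives such a sequence (one can also get it directly: $M$ embeds in a projective, and the cokernel is again a syzygy of something, hence of projective dimension controlled by $1$-Gorensteinness, so it is projective after one more step — more carefully, $M$ Gorenstein projective over a $1$-Gorenstein algebra means $\Omega^{-1}M$ exists and $M\hookrightarrow P^0\twoheadrightarrow \Omega^{-1}M$ with $\Omega^{-1}M$ again Gorenstein projective, and since $\pd$ of any module is $\leq 1$ when it is not already special... here I would instead just quote that $1$-Gorenstein gentle algebras have $\Gproj\Lambda$ closed under cosyzygies with the cosyzygy sequence $0\to M\to I^0\to I^1\to 0$ staying short exact because $\id\Lambda\leq 1$). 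Applying $\Phi$ to $0\to M\to I^0\to I^1\to 0$ and using that $\Phi$ is exact on this sequence (the cokernel computation on string modules in the paragraph before the lemma, together with Lemma \ref{lemma property of phi}(ii) which says $\Phi$ preserves injections and surjections, plus a dimension-vector count showing no extra homology appears) yields $0\to\Phi(M)\to\Phi(I^0)\to\Phi(I^1)\to 0$ with $\Phi(I^0),\Phi(I^1)$ injective by Lemma \ref{lemma Phi preserves projectives and injectives}; hence $\id_\Gamma\Phi(M)\leq 1$.

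For $\pd\Phi(M)\leq 1$ I would argue dually but with one wrinkle: $M\in\Gproj\Lambda$ need not be projective, so instead I use Theorem \ref{theorem Kalck}(i) to write $M$ as a direct sum of projectives and modules $R(\alpha_i)$, and for each $R(\alpha_i)$ use the short exact sequence (\ref{equation 1}), $0\to R(\alpha_i)\xrightarrow{a_i} P_i\xrightarrow{b_i} R(\alpha_{i-1})\to 0$. Applying $\Phi$ and checking exactness on this specific sequence of string modules (here the image-factorization along the cyclic arrow $\alpha_i$ is exactly what is being recorded, so $\Phi$ sends this to $0\to\Phi(R(\alpha_i))\to\Phi(P_i)\to\Phi(R(\alpha_{i-1}))\to 0$), one gets that $\Phi(R(\alpha_i))$ is the kernel of a map from the projective $\Gamma$-module $\Phi(P_i)$; but also $\Phi(R(\alpha_{i-1}))$ appears as such a quotient, so running the index around the cycle shows $\Phi(R(\alpha_{i-1}))$ has a projective presentation whose kernel is $\Phi(R(\alpha_i))$, and one more step identifies $\Phi(R(\alpha_i))$ itself as a submodule of a projective with projective quotient — more cleanly, I expect $\Phi(R(\alpha_i))$ to be directly identifiable (via the string description $\iota$) as a \emph{syzygy} of a $\Gamma$-module, and since $\Gamma$ has finite global dimension equal to $\Gd\Lambda=1$... no: $\gldim\Gamma$ need not be $1$. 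So the honest route is: from $0\to\Phi(R(\alpha_i))\to\Phi(P_i)\to\Phi(R(\alpha_{i-1}))\to 0$ it suffices to show $\pd_\Gamma\Phi(R(\alpha_{i-1}))\leq 2$, equivalently that $\Phi(R(\alpha_i))$ — the kernel — is already known to have $\pd\leq 1$; this is a finite cyclic system of inequalities $\pd\Phi(R(\alpha_i))\leq 1+\pd\Phi(R(\alpha_{i-1}))$ around the cycle, which alone does not close, so I must exhibit \emph{one} $R(\alpha_j)$ in each cycle with $\Phi(R(\alpha_j))$ of projective dimension $\leq 1$ by a direct string-combinatorial computation (showing $\Phi(R(\alpha_j))$ is the kernel of a map between two \emph{projective} $\Gamma$-modules, which I can read off from the bound quiver $(Q^{Aus},I^{Aus})$), and then propagate.

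The main obstacle is precisely this last point: verifying, via the explicit description of $(Q^{Aus}, I^{Aus})$ and the maps $\iota, \pi^-$ on strings, that applying $\Phi$ to the sequences (\ref{equation 1}) yields short exact sequences of $\Gamma$-modules and that the resulting modules $\Phi(R(\alpha_i))$ are genuinely second syzygies (equivalently, kernels of maps of projectives after at most one step), so that $\pd_\Gamma\Phi(R(\alpha_i))\le 1$. Concretely I would identify $\Phi(R(\alpha_i))$ with a string module over $\Gamma$, compute its projective cover from the quiver $Q^{Aus}$ (whose projectives were described in Lemma \ref{lemma Phi preserves projectives and injectives}), and check the kernel of the projective cover is again projective. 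Everything else — exactness of $\Phi$ on the given sequences, and the passage from "$\id$ of all $\Phi(I^k)\le 0$" and "$\pd$ of all $\Phi(P)\le 0$" to the bounds for $\Phi(M)$ via the two-term coresolution/resolution — is routine once that computation is in hand, using Lemmas \ref{lemma property of phi} and \ref{lemma Phi preserves projectives and injectives} and Theorems \ref{theorem characterize of gorenstein property} and \ref{theorem Kalck}.
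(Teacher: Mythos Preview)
Your approach has two genuine errors, both stemming from the same misconception: you treat $\Phi$ as though it were exact on the short exact sequences in play, and it is not.

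For the injective dimension, you propose to start from a two-term injective (or projective) coresolution $0\to M\to I^0\to I^1\to 0$ over $\Lambda$. No such sequence exists when $M=R(\alpha)$ is a non-projective indecomposable Gorenstein projective. Over a Gorenstein algebra, a module has finite projective dimension if and only if it has finite injective dimension; since $R(\alpha)$ has infinite projective dimension, it also has infinite injective dimension, so there is no finite injective coresolution to feed into $\Phi$. Likewise, the coresolution $0\to M\to P^0\to P^1\to\cdots$ from Theorem \ref{theorem characterize of gorenstein property} is genuinely infinite and periodic for $M=R(\alpha)$; its truncation $0\to R(\alpha_i)\to P_i\to R(\alpha_{i-1})\to 0$ has $R(\alpha_{i-1})$ in the right-hand slot, not a projective.

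For the projective dimension, you apply $\Phi$ to the sequence (\ref{equation 1}) and assert that the result $0\to\Phi(R(\alpha_i))\to\Phi(P_{i-1})\to\Phi(R(\alpha_{i-1}))\to 0$ is exact. It is not: at the new vertex $\alpha_i\in Q_0^{Aus}$ one has $\Phi(R(\alpha_i))_{\alpha_i}=0$, $\Phi(P_{i-1})_{\alpha_i}=K$, and $\Phi(R(\alpha_{i-1}))_{\alpha_i}=0$, so the middle homology is nonzero. A dimension count confirms this: $\dim\Phi(P_{i-1})=\dim\Phi(R(\alpha_i))+\dim\Phi(R(\alpha_{i-1}))+1$, the extra $1$ coming from the vertex $\alpha_i$ created by the cyclic arrow $\alpha_i$ in the string of $P_{i-1}$. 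This is exactly why you end up with a ``cyclic system of inequalities'' that does not close.

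The paper's proof avoids both problems by working directly over $\Gamma$ and explicitly identifying the correct resolutions. The actual kernel of $\Phi(P_{i-1})\to\Phi(R(\alpha_{i-1}))$ is the string module $U_{\alpha_i}$ with string $\iota(\gamma_t\cdots\gamma_1)\alpha_i^-$, which is the indecomposable \emph{projective} $\Gamma$-module at the new vertex $\alpha_i$; this immediately gives $\pd_\Gamma\Phi(R(\alpha_{i-1}))\le 1$ with no induction around the cycle. Dually, the injective coresolutions of $\Phi(P_{i-1})$ and $\Phi(R(\alpha_{i-1}))$ over $\Gamma$ terminate in injective $\Gamma$-modules $J_{\alpha_{i-1}}$ supported at the new vertices, which are likewise not in the image of $\Phi$. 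The essential point you are missing is that the resolutions over $\Gamma$ necessarily involve the extra indecomposable projectives and injectives at the vertices in $Q_1^{cyc}\subset Q_0^{Aus}$, and these cannot be produced by applying $\Phi$ to any $\Lambda$-module.
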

\begin{proof}
We only prove it for the case when $M$ is an indecomposable Gorenstein projective module.

{\bf Case (1).} If $M$ is projective, then Lemma \ref{lemma Phi preserves projectives and injectives} yields that $\Phi(M)$ is also projective. We denote by $i-1$ the vertex corresponding to the indecomposable projective module $M$.

{\bf Case (1a).} If the corresponding vertex $i-1$ lies at a cycle $c=\alpha_n\dots\alpha_1\in\cc(\Lambda)$, we assume the quiver $Q$ looks as Figure $2$ shows.
Then there is short exact sequence:
$$0\rightarrow P_{i-1}\xrightarrow{f} I_{j_2}\oplus I_{j_3}\xrightarrow{g} I_{i-1}\oplus I_a\oplus I_b\rightarrow0,$$
Lemma \ref{lemma property of phi} implies $\Phi(f)$ is also injective.
We denote by $U_1$ (resp. $U_2$) the indecomposable $\Gamma$-module corresponding to the string $\iota(\rho_2\dots\rho_v)$ (resp. $\iota(\delta_2\dots\delta_u) $)if $\rho_1\in Q_1^{ncyc}$ (resp. $\delta_1\in Q_1^{ncyc}$), and to the string $\rho_1^+\iota(\rho_2\dots\rho_v)$ (resp. $\delta_1^+\iota(\delta_2\dots\delta_u)$) if $\rho_1\in Q_1^{cyc}$ (resp. $\delta\in Q_1^{cyc}$).

We claim that $U_1$ and $U_2$ are injective. We only prove it for $U_1$. If $\rho_1\in Q_1^{ncyc}$, then it follows from Lemma \ref{lemma Phi preserves projectives and injectives} that $U_1$ is injective. Otherwise, if $\rho_1\in Q_1^{cyc}$, then $\rho_1^+\iota(\rho_2\dots\rho_v)$ is a string with ending vertex $\rho_1\in Q_1^{Aus}$, and $\iota(\rho_2\dots\rho_v)$ is the string of an indecomposable injective module, which implies $\iota(\rho_2\dots\rho_v)$ is maximal. Note that there is only one arrow $\rho_1^+$ with ending vertex $\rho_1$. So $\rho_1^+\iota(\rho_2\dots\rho_v)$ is the string of the indecomposable injective module corresponding to vertex $\rho_1\in Q_1^{Aus}$.

By the above construction, it is easy to see that the following sequence is an injective resolution of $\Phi(M)=\Phi(P_{i-1})$:
$$0\rightarrow \Phi(P_{i-1})\xrightarrow{\Phi(f)} \Phi(I_{j_2})\oplus \Phi(I_{j_3})\rightarrow \Phi(I_{i-1})\oplus U_1\oplus U_2\rightarrow0,$$
since $\Phi$ preserves injectives. So $\id \Phi(P_{i-1})\leq1$.

{\bf Case (1b).} If the corresponding vertex $i-1$ does not lie at any cycle in $\cc(\Lambda)$, we assume that the quiver $Q$ looks as Figure $3$ shows. The remaining is similar to Case (1a), we omit the proof here.

{\bf Case (2).} If $M=R(\alpha_{i-1})$ for some $\alpha_{i-1}$ lying on some cycle $c=\alpha_n\dots\alpha_1\in\cc(\Lambda)$,
we assume that the quiver $Q$ looks as Figure $2$ shows. Then $R(\alpha_{i-1})$ is the string module with its string $\beta_s\dots \beta_1$.
There is a short exact sequence
$$0\rightarrow R(\alpha_i)\xrightarrow{f} P_{i-1}\xrightarrow{g} R(\alpha_{i-1})\rightarrow0.$$
So $\Phi(R(\alpha_{i-1}))$ is the $\Gamma$-module corresponding to the string $\iota(\beta_s\dots \beta_1)$.
Denote by $U_{\alpha_{i}}$ the $\Gamma$-module corresponding to the string $\iota(\gamma_t\dots \gamma_1)\alpha_i^-\in\cc(\Gamma)$.
Since there is only one arrow $\alpha_i^-$ starting from $\alpha_i\in Q_0^{Aus}$, and $\gamma_t\dots \gamma_1$ is maximal, we get that $U_{\alpha_{i}}$ is the indecomposable projective $\Gamma$-module corresponding to the vertex $\alpha_i$.
From this construction, it is easy to see that the following sequence is a projective resolution of $\Phi(R(\alpha_{i-1}))$:
$$0\rightarrow U_{\alpha_{i}}\rightarrow \Phi(P_{i-1})\xrightarrow{\Phi(g)} \Phi(R(\alpha_{i-1}))\rightarrow0,$$
since $\Phi$ preserves projectives and epimorphisms. So $\pd \Phi(R(\alpha_{i-1}))\leq1$.

We denote by $J_{\alpha_{i-1}}$ the indecomposable injective $\Gamma$-module corresponding to the vertex $\alpha_{i-1}\in Q_1^{Aus}$, also $U_1$ the indecomposable $\Gamma$-module corresponding to the string $\iota(\rho_2\dots\rho_v)$ if $\rho_1\in Q_1^{ncyc}$, and to the string $\rho_1^+\iota(\rho_2\dots\rho_v)$ if $\rho_1\in Q_1^{cyc}$. By the above, $U_1$ is also injective.
Then the following sequence is an injective resolution of $\Phi(R(\alpha_{i-1}))$:
$$0\rightarrow \Phi(R(\alpha_{i-1}))\rightarrow \Phi(I_{j_2})\rightarrow J_{\alpha_{i-1}}\oplus U_1\rightarrow0.$$
So $\id\Phi(R(\alpha_{i-1}))\leq1$.

In conclusion, for any $M\in\Gproj \Lambda$, we have that $\pd \Phi(M)\leq1$ and $\id \Phi(M)\leq 1$.
\end{proof}

\begin{lemma}\label{lemma rigid of modules}
Let $\Lambda$ be a $1$-Gorenstein gentle algebra, and $\Gamma$ be its Cohen-Macaulay Auslander algebra. Then for any $M\in\Gproj \Lambda$, $\Ext_\Gamma^1(\Phi(M),\Phi(M))=0$.
\end{lemma}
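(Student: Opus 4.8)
The plan is to deduce the $\Ext^1$-vanishing from the structural results already established, together with the fact that the Gorenstein projective $\Lambda$-modules are themselves rigid. First I would recall that $M\in\Gproj\Lambda$ is rigid: for a $1$-Gorenstein algebra every indecomposable Gorenstein projective module is either projective or of the form $R(\alpha)$ for some $\alpha$ lying on a cycle $c\in\cc(\Lambda)$, and one checks (using the short exact sequences \eqref{equation 1} and the fact that $\underline{\Gproj}\Lambda$ is triangle equivalent to $\prod_c D^b(K)/[l(c)]$ by Theorem \ref{theorem Kalck}(ii)) that $\Ext^1_\Lambda(M,M)=0$; more concretely, $\underline{\Hom}_\Lambda(R(\alpha_i),\Omega^{-1}R(\alpha_i))$ lives in an orbit category of $D^b(K)$ and hence vanishes since distinct indecomposables of $D^b(K)$ sit in distinct degrees. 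So the real work is transporting this to $\Gamma$ along $\Phi$.

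The key step is to use the weak adjunction of Lemma \ref{lemma property of phi} together with $\res\circ\Phi\simeq\Id$. Since $\pd_\Gamma\Phi(M)\le 1$ by Lemma \ref{lemma projective injective dimension of modules}, take a projective resolution $0\to Q_1\xrightarrow{f} Q_0\to\Phi(M)\to 0$ in $\mod\Gamma$. Applying $\Hom_\Gamma(-,\Phi(M))$ gives $\Ext^1_\Gamma(\Phi(M),\Phi(M))=\Coker\big(\Hom_\Gamma(Q_0,\Phi(M))\to\Hom_\Gamma(Q_1,\Phi(M))\big)$. By Lemma \ref{lemma Phi preserves projectives and injectives}, in all cases appearing in the proof of Lemma \ref{lemma projective injective dimension of modules} the modules $Q_0,Q_1$ are again of the form $\Phi(P)$ for projective $\Lambda$-modules $P$ (indeed $Q_0=\Phi(P)$ and $Q_1$ is either an indecomposable projective of $\Gamma$ supported partly at a vertex $\alpha\in Q_1^{cyc}$, or comes from $\Phi$ of a projective), so I must handle the two shapes of $Q_1$ separately. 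When $Q_1=\Phi(P')$ for a projective $\Lambda$-module $P'$, Lemma \ref{lemma property of phi}(ii) identifies $\Hom_\Gamma(\Phi(P'),\Phi(M))\simeq\Hom_\Lambda(P',M)$ and $\Hom_\Gamma(\Phi(Q_0'),\Phi(M))\simeq\Hom_\Lambda(Q_0',M)$ compatibly, so the cokernel computing $\Ext^1_\Gamma$ maps onto the cokernel computing $\Ext^1_\Lambda(M,M)=0$; combined with the injectivity statements of Lemma \ref{lemma property of phi}(i) this forces $\Ext^1_\Gamma(\Phi(M),\Phi(M))=0$.

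The genuinely delicate case is $M=R(\alpha_{i-1})$, where (Case (2) of Lemma \ref{lemma projective injective dimension of modules}) the projective resolution is $0\to U_{\alpha_i}\to\Phi(P_{i-1})\to\Phi(R(\alpha_{i-1}))\to 0$ with $U_{\alpha_i}$ the indecomposable projective $\Gamma$-module \emph{at the new vertex} $\alpha_i\in Q_1^{cyc}$, which is not in the image of $\Phi$. Here I would compute $\Hom_\Gamma(U_{\alpha_i},\Phi(R(\alpha_{i-1})))$ directly from the string description: $U_{\alpha_i}$ has string $\iota(\gamma_t\cdots\gamma_1)\alpha_i^-$ and $\Phi(R(\alpha_{i-1}))=N(\iota(\beta_s\cdots\beta_1))$, and one reads off graph maps between these strings; the point is that any such map must kill the arrow $\alpha_i^-$-direction (there is no common substring forcing a nonzero overlap, because $\beta_1\ne\alpha_i$ by maximality in Figure 2), so every map $U_{\alpha_i}\to\Phi(R(\alpha_{i-1}))$ factors through $\Phi(P_{i-1})$ via $\Phi(g)$ — equivalently $\Hom_\Gamma(\Phi(P_{i-1}),\Phi(R(\alpha_{i-1})))\to\Hom_\Gamma(U_{\alpha_i},\Phi(R(\alpha_{i-1})))$ is surjective, which is exactly the vanishing of the relevant $\Ext^1$. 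I expect this string-combinatorial surjectivity to be the main obstacle; an alternative, possibly cleaner, route is to apply $\Hom_\Gamma(\Phi(R(\alpha_{i-1})),-)$ to the injective coresolution $0\to\Phi(R(\alpha_{i-1}))\to\Phi(I_{j_2})\to J_{\alpha_{i-1}}\oplus U_1\to 0$ and argue dually, using that $\Phi$ preserves injectives and that $\Hom_\Lambda(R(\alpha_{i-1}),R(\alpha_{i-1}))$ already surjects appropriately; I would present whichever of the two is shorter and fall back on the other for the remaining sub-cases.
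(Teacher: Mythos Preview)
Your overall shape — take the length-one projective resolution of $\Phi(M)$ from Lemma~\ref{lemma projective injective dimension of modules} and show that $\Hom_\Gamma(-,\Phi(M))$ applied to the kernel map is surjective — is exactly the paper's approach. But two things go wrong in your execution.

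First, the opening claim that every $M\in\Gproj\Lambda$ is rigid over $\Lambda$ is \emph{false}. The short exact sequence $0\to R(\alpha_i)\to P_i\to R(\alpha_{i-1})\to 0$ of (\ref{equation 1}) is non-split, so $\Ext^1_\Lambda(R(\alpha_{i-1}),R(\alpha_i))\neq 0$; hence $M=R(\alpha_{i-1})\oplus R(\alpha_i)$ is not rigid. (Even for a single indecomposable it can fail: if $c$ has length one, i.e.\ a loop $\alpha$ with $\alpha^2\in I$, then $\Omega R(\alpha)\cong R(\alpha)$ and $\Ext^1_\Lambda(R(\alpha),R(\alpha))\cong\underline{\End}_\Lambda(R(\alpha))=K$.) So the middle paragraph, which tries to reduce the $\Gamma$-computation to $\Ext^1_\Lambda(M,M)=0$ via Lemma~\ref{lemma property of phi}, is built on a claim that does not hold. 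In fact that paragraph is also vacuous for a different reason: when $M$ is indecomposable the syzygy $Q_1$ is either zero (if $M$ is projective) or $U_{\alpha_i}$ (if $M=R(\alpha_{i-1})$), and $U_{\alpha_i}$ is \emph{never} of the form $\Phi(P')$ since it is supported at the new vertex $\alpha_i$. So no reduction to $\Lambda$-rigidity is available.

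Second, the ``delicate case'' is incomplete. You only treat $\Hom_\Gamma(U_{\alpha_i},\Phi(R(\alpha_{i-1})))$, i.e.\ the diagonal term $\Ext^1_\Gamma(\Phi(R(\alpha_{i-1})),\Phi(R(\alpha_{i-1})))$. For a general $M\in\Gproj\Lambda$ one must also kill the cross terms $\Ext^1_\Gamma(\Phi(R(\alpha_{i-1})),\Phi(P_k))$ and $\Ext^1_\Gamma(\Phi(R(\alpha_{i-1})),\Phi(R(\beta_{j-1})))$ for arbitrary $k$ and $\beta_{j-1}$. The paper handles all of these at once by a single path-lifting observation: the inclusion $f:U_{\alpha_i}\hookrightarrow\Phi(P_{i-1})$ is induced by the arrow $\alpha_i^+$, and since $\alpha_i^+$ is the \emph{only} arrow of $Q^{Aus}$ ending at the vertex $\alpha_i$, every nonzero path in $\Gamma$ from a vertex of $Q_0$ to $\alpha_i$ factors as $\alpha_i^+\cdot l'$. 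Hence every morphism from $U_{\alpha_i}$ to $\Phi(M_2)$ (for any indecomposable Gorenstein projective $M_2$, whose top sits at a vertex of $Q_0$) factors through $f$, giving the required surjectivity of $\Hom_\Gamma(\Phi(P_{i-1}),\Phi(M_2))\to\Hom_\Gamma(U_{\alpha_i},\Phi(M_2))$ in every case simultaneously. Your string-overlap computation for the single case $M_2=R(\alpha_{i-1})$ is essentially a special instance of this; replacing it by the uniform ``only arrow into $\alpha_i$ is $\alpha_i^+$'' argument both fixes the gap and avoids the false detour through $\Lambda$-rigidity.
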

\begin{proof}
We only need prove that for any two indecomposable Gorenstein projective $\Lambda$-modules $M_1,M_2$, $\Ext_\Gamma^1(\Phi(M_1),\Phi(M_2))=0$.

If $M_1$ is projective, then $\Phi(M_1)$ is also projective, so $\Ext_\Gamma^1(\Phi(M_1),\Phi(M_2))=0$ for any $M_2\in\Gproj \Lambda$. For this, we assume that $M_1=R(\alpha_{i-1})$ for some $\alpha_{i-1}\in Q_1^{cyc}$.
We also assume the quiver $Q$ looks as Figure $2$ shows.
Then there is a short exact sequence:
\begin{equation}\label{equation 2}
0\rightarrow U_{\alpha_{i}}\xrightarrow{f} \Phi(P_{i-1})\xrightarrow{\Phi(g)} \Phi(R(\alpha_{i-1}))\rightarrow0,
\end{equation}
where $U_{\alpha_{i}}$ is the $\Gamma$-module corresponding to the string $\iota(\gamma_t\dots \gamma_1)\alpha_i^-\in\cs(\Gamma)$, which is the indecomposable projective module corresponding to the vertex $\alpha_{i}\in Q_0^{Aus}$;
$\Phi(P_{i-1})$ is the indecomposable projective $\Gamma$-module corresponding to the vertex $i-1\in Q_0^{Aus}$.
Note that $f$ is induced by $(\alpha_i^+:i-1\rightarrow \alpha_i)\in Q_1^{Aus}$.

If $M_2$ is projective, which is of form $P_k$ corresponding to the vertex $k$, Then $\Phi(M_2)=\Phi(P_k)$ is also projective.
For any nonzero morphism $h:U_{\alpha_{i}}\rightarrow\Phi(P_k)$, we can assume that $h$ is induced by a nonzero path $l$ from $k$ to $\alpha_{i}$ in $\Gamma$. Since $k\in Q_0\subseteq Q_0^{Aus}$ and $k\neq\alpha_i$, the length of $l$ is not zero.
There is only one arrow $\alpha_i^+$ ending to $\alpha_i\in Q_1^{Aus}$, so $l=\alpha_i^+l'$ for some nonzero path $l'$, which implies that $h$ factors through $f$. So we get that
$$\Hom_\Gamma(\Phi(P_{i-1}), \Phi(P_k))\xrightarrow{\Hom_\Gamma(f,\Phi(P_k))}\Hom_\Gamma(U_{\alpha_i}, \Phi(P_k))$$
is surjective. By applying $\Hom_\Gamma(-,\Phi(P_k))$ to Sequence (\ref{equation 2}), we get that
$$\Ext^1_\Gamma(\Phi(R(\alpha_{i-1})),\Phi(P_k) )=0.$$

If $M_2=R(\beta_{j-1})$, where $\beta_{j-1}$ lies on a cycle $\beta_m\dots\beta_1\in\cc(\Lambda)$,
we denote by $\sigma_k\dots \sigma_1$ be the string of $R(\beta_{j-1})$:
\[\xymatrix{ j-1\ar[r]^{\sigma_1} & \bullet \ar[r]^{\sigma_2}& \cdots \ar[r]^{\sigma_k}&\bullet.}\]

For any nonzero morphism $p:U_{\alpha_{i}}\rightarrow \Phi(R(\beta_{j-1}))$, since $U_{\alpha_i}$ is the indecomposable projective module corresponding to the vertex $\alpha_i$, we assume that $p$ is induced by a nonzero path $l$ from $j-1$ to $\alpha_i$.
Since there is only one arrow $\alpha_i^+$ ending to $\alpha_i$ and $j-1\in Q_0\subseteq Q_0^{Aus}$, we get that $l=l'\alpha_i^+$.
Note that $f$ is induced by $\alpha_i^+$. We get that $p$ factors through $f$ as $p=p'f$ for some morphism $p':\Phi(P_{i-1})\rightarrow \Phi(R(\beta_{j-1}))$.
Therefore, $$\Hom_\Gamma(\Phi(P_{i-1}), \Phi(R(\beta_{j-1})))\xrightarrow{\Hom_\Gamma(f,\Phi(R(\beta_{j-1})))}\Hom_\Gamma(U_{\alpha_i}, \Phi(R(\beta_{j-1})))$$
is surjective. By applying $\Hom_\Gamma(-,\Phi(P_k))$ to Sequence (\ref{equation 2}), we get that
$$\Ext^1_\Gamma(\Phi(R(\alpha_{i-1})),\Phi(R(\beta_{j-1})))=0.$$

In conclusion, we get that $\Ext_\Gamma^1(\Phi(M_1),\Phi(M_2))=0$ for any two indecomposable Gorenstein projective $\Lambda$-modules $M_1,M_2$. So for any $M\in\Gproj \Lambda$, $\Ext_\Gamma^1(\Phi(M),\Phi(M))=0$.
\end{proof}

\begin{theorem}\label{theorem smoothness of quiver grassmannians}
Let $\Lambda=KQ/\langle I\rangle$ be a gentle algebra which is $1$-Gorenstein, and $\Gamma=KQ^{Aus}/\langle I^{Aus}\rangle$ its Cohen-Macaulay Auslander algebra.
Let $M$ be a Gorenstein projective $\Lambda$-module and let $e$ be a dimension vector for $Q^{Aus}$. Then

(i) $\Gamma$ has global dimension at most two, both the injective and the projective dimension of $\Phi(M)$ over $\Gamma$ are at most one, and $\Ext_\Gamma^1(\Phi(M),\Phi(M))=0$;

(ii) The quiver Grassmannian $\Gr_e(\Phi(M))$ is smooth (and reduced), with irreducible and equidimensional connected components.
\end{theorem}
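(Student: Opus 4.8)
The plan is to obtain part~(i) by assembling the results already proved in this section, and then to deduce part~(ii) as a direct application of Theorem~\ref{lemma smooth of quiver Grassmannian} to the algebra $\Gamma$ and the $\Gamma$-module $\Phi(M)$.

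For~(i), I would first argue that $\gldim\Gamma\leq 2$. Since $\Lambda$ is gentle it is Gorenstein by Gei{\ss}--Reiten, and by hypothesis $\Gd\Lambda\leq 1$. If $\Lambda$ is semisimple, then $\gldim\Gamma=0$; if $\gldim\Lambda=1$, then $\gldim\Gamma=1$ by Lemma~\ref{lemma global dimension of Gorenstein Auslander algebra}(ii); and in the remaining case one has $\Gproj\Lambda\neq\proj\Lambda$ (indeed, if $\Gproj\Lambda=\proj\Lambda$ and $\Lambda$ is $1$-Gorenstein, then by Theorem~\ref{theorem characterize of gorenstein property} every module has projective dimension at most $1$, so $\gldim\Lambda\leq 1$), so that Lemma~\ref{lemma global dimension of Gorenstein Auslander algebra}(iii)(b) yields $\gldim\Gamma=2$. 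Hence $\gldim\Gamma\leq 2$ in all cases. Next, the inequalities $\pd\Phi(M)\leq 1$ and $\id\Phi(M)\leq 1$ are exactly the content of Lemma~\ref{lemma projective injective dimension of modules}, and the rigidity $\Ext^1_\Gamma(\Phi(M),\Phi(M))=0$ is exactly Lemma~\ref{lemma rigid of modules}. This proves~(i).

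For~(ii), observe that $\Gamma=KQ^{Aus}/\langle I^{Aus}\rangle$ is a quotient of the path algebra $KQ^{Aus}$, and that $\Phi(M)$, viewed as a representation of $KQ^{Aus}$, has the property that its $\Gamma$-submodules are precisely its $KQ^{Aus}$-submodules, so that $\Gr_e(\Phi(M))$ is well defined. By part~(i), $\Gamma$ has global dimension at most $2$, the module $\Phi(M)$ has projective and injective dimension at most $1$ over $\Gamma$, and $\Ext^1_\Gamma(\Phi(M),\Phi(M))=0$; thus all hypotheses of Theorem~\ref{lemma smooth of quiver Grassmannian} are satisfied with $A=\Gamma$ there. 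Therefore $\Gr_e(\Phi(M))$ is smooth and reduced, with irreducible and equidimensional connected components.

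Since the substantive work is carried by Lemmas~\ref{lemma projective injective dimension of modules} and~\ref{lemma rigid of modules} together with Lemma~\ref{lemma global dimension of Gorenstein Auslander algebra}, the argument is essentially bookkeeping; the only point that requires a little care is the case analysis for $\gldim\Gamma$, in particular the degenerate situation $\cc(\Lambda)=\emptyset$ (where $Q^{Aus}=Q$, so $\Gamma=\Lambda$ has global dimension at most $1$) and the verification that the hypothesis $\Gproj\Lambda\neq\proj\Lambda$ of Lemma~\ref{lemma global dimension of Gorenstein Auslander algebra}(iii)(b) is legitimately invoked.
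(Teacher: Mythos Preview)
Your proposal is correct and follows essentially the same approach as the paper, which simply states that the theorem follows immediately from Lemma~\ref{lemma projective injective dimension of modules}, Lemma~\ref{lemma rigid of modules}, and Theorem~\ref{lemma smooth of quiver Grassmannian}. You have in fact been more careful than the paper: its one-line proof does not explicitly address why $\gldim\Gamma\leq 2$, whereas you correctly invoke Lemma~\ref{lemma global dimension of Gorenstein Auslander algebra} and give the case analysis (including the observation that $\Gproj\Lambda=\proj\Lambda$ together with $1$-Gorensteinness forces $\gldim\Lambda\leq 1$).
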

\begin{proof}
It follows from Lemma \ref{lemma projective injective dimension of modules}, Lemma \ref{lemma rigid of modules} and Lemma \ref{lemma smooth of quiver Grassmannian} immediately.
\end{proof}

\section{Construction of the desingularizations}
This section is based on \cite[Section 7]{CFR13}. In this section, we always assume that $\Lambda=KQ/\langle I\rangle$ to be a $1$-Gorenstein gentle algebra, and $\Gamma$ be its Cohen-Macaulay Auslander algebra. For a given Gorenstein projective $\Lambda$-module $M$, a dimension vector $e$ and an isomorphism classes $[N]$ of $\Lambda$-modules, it is proved in \cite[Section 2.3]{CFR12} that the subset $\cs_{[N]}$ of $\Gr_e(M)$, consisting of the submodules which are isomorphic to $N$, is locally closed and irreducible of dimension $\dim \Hom_\Lambda(N,M)-\dim \End_\Lambda(N)$.

\begin{lemma}[see e.g. \cite{CGL}]
If $\Lambda$ is $1$-Gorenstein, then the Gorenstein projective modules are just torsionless modules. In particular, for any submodule $N$ of any Gorenstein projective $\Lambda$-module $M$, $N$ is also Gorenstein projective.
\end{lemma}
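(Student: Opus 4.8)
The statement to prove is that for a $1$-Gorenstein gentle algebra $\Lambda$, the Gorenstein projective modules coincide with the torsionless (= first-syzygy) modules, and that submodules of Gorenstein projectives are again Gorenstein projective. The plan is to exploit the characterization of $d$-Gorenstein algebras in Theorem~\ref{theorem characterize of gorenstein property}: since $\Lambda$ is $1$-Gorenstein, that theorem gives $\Gproj(\Lambda)=\Omega^1(\mod\Lambda)$, i.e.\ a $\Lambda$-module is Gorenstein projective if and only if it is a first syzygy of some module, equivalently a submodule of a projective module, equivalently torsionless. So the first assertion is essentially a restatement of Theorem~\ref{theorem characterize of gorenstein property}(2) for $d=1$, together with the standard fact that $\Omega^1(\mod\Lambda)$ is precisely the class of torsionless modules (every module embeds in its projective cover's kernel... more precisely, $M$ is torsionless iff $M\hookrightarrow P$ for some projective $P$ iff $M\cong\Omega N$ for $N=P/M$). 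I would spell this equivalence out in one or two lines.

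For the second assertion, I would argue as follows. Let $M$ be Gorenstein projective and $N\subseteq M$ a submodule. By the first part, $M$ is torsionless, so there is a monomorphism $M\hookrightarrow P$ with $P$ projective; composing with $N\hookrightarrow M$ gives a monomorphism $N\hookrightarrow P$, so $N$ is torsionless, hence Gorenstein projective again by the first part. This is the whole argument — the key point is simply that ``torsionless'' is obviously inherited by submodules (a submodule of a submodule of a projective is a submodule of a projective), whereas ``Gorenstein projective'' defined via complete resolutions is not manifestly so; the equivalence supplied by the $1$-Gorenstein hypothesis is exactly what bridges the gap.

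The only genuine subtlety — and the step I would treat most carefully — is the identification $\Omega^1(\mod\Lambda)=\{\text{torsionless modules}\}$, and in particular making sure we use the ``In this case'' clause of Theorem~\ref{theorem characterize of gorenstein property}: for $1$-Gorenstein $\Lambda$, a module $G$ is Gorenstein projective iff it embeds into a projective module (take the exact sequence $0\to G\to P^0\to P^1\to\cdots$ and just keep $0\to G\to P^0$; conversely $\Omega^d$ of anything is Gorenstein projective). Since $0\to N\to P^0$ arises by restricting $0\to M\to P^0$ along $N\hookrightarrow M$, and since $P^0$ is projective, $N$ lies in $\Omega^1(\mod\Lambda)=\Gproj(\Lambda)$. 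No further input about gentleness is needed beyond the fact, already recorded in the excerpt (Gei{\ss}--Reiten), that $\Lambda$ is Gorenstein, plus the standing hypothesis that it is $1$-Gorenstein. I would phrase the final write-up as: recall torsionless $\Leftrightarrow$ submodule of a projective; invoke Theorem~\ref{theorem characterize of gorenstein property} for $d=1$ to get $\Gproj\Lambda=\{$torsionless modules$\}$; then observe closure under submodules is immediate from the ``submodule of a projective'' description.
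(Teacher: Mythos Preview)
Your argument is correct and is exactly the standard one: invoke Theorem~\ref{theorem characterize of gorenstein property} with $d=1$ to identify $\Gproj(\Lambda)$ with $\Omega^1(\mod\Lambda)$, observe that first syzygies over an Artin algebra are precisely the submodules of projectives (i.e.\ the torsionless modules), and conclude that this class is closed under submodules. The paper itself does not supply a proof of this lemma; it simply cites \cite{CGL}, so there is nothing further to compare against.
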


\begin{lemma}\label{lemma dimension vector of phi(M)}
For any Gorenstein projective $\Lambda=KQ/\langle I\rangle$-module $N=((N_i)_{i\in Q_0},(N_\alpha)_{\alpha \in Q_1})$, the underlying space of $\widehat{N}$ is
$$\bigoplus_{i\in Q_0}\widehat{N}_i\oplus \bigoplus_{\alpha\in Q_1^{cyc}} \widehat{N}_\alpha.$$ Then

(i) $\dim \widehat{N}_i=\dim N_i=\dim\Hom_{\Lambda}(P_i,N)$, where $P_i$ is the indecomposable projective module corresponding to $i$.

(ii) $\dim \widehat{N}_\alpha=\dim\Hom_{\Lambda}(P_{s(\alpha)},N)-\dim \Hom_\Lambda(R(\beta),N)$, where $P_{s(\alpha)}$ is the indecomposable projective module corresponding to $s(\alpha)$, and $\beta$ is the arrow satisfying $\alpha\beta\in I$.
\end{lemma}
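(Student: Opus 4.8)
The plan is to reduce both statements to standard identities about representations of $\Lambda$ together with the short exact sequences \eqref{equation 1}.

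Part (i) requires essentially no work. By the construction of $\Phi$ one has $\widehat N_i = N_i$ for every $i\in Q_0\subseteq Q_0^{Aus}$, so the first equality is definitional, and the second is the usual identification $\Hom_\Lambda(P_i,N)=\Hom_\Lambda(\Lambda e_i,N)\cong e_iN=N_i$, valid for an arbitrary $\Lambda$-module $N$.

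For part (ii), again by construction $\widehat N_\alpha=\Im\big(N_\alpha\colon N_{s(\alpha)}\to N_{t(\alpha)}\big)$, whence $\dim\widehat N_\alpha=\dim N_{s(\alpha)}-\dim\Ker N_\alpha=\dim\Hom_\Lambda(P_{s(\alpha)},N)-\dim\Ker N_\alpha$ by part (i). So it suffices to prove $\dim\Ker N_\alpha=\dim\Hom_\Lambda(R(\beta),N)$, where $\beta$ is the unique arrow with $\alpha\beta\in I$. Since $\alpha\in Q_1^{cyc}$ lies on a cycle $c\in\cc(\Lambda)$ and $\beta$ is, by gentleness, unique, $\beta$ is precisely the arrow preceding $\alpha$ on $c$; hence the pair ``$R(\alpha)\hookrightarrow P_{s(\alpha)}$, $P_{s(\alpha)}\twoheadrightarrow R(\beta)$'' is one of the short exact sequences \eqref{equation 1}, say $0\to R(\alpha)\xrightarrow{a}P_{s(\alpha)}\xrightarrow{b}R(\beta)\to 0$. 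Applying $\Hom_\Lambda(-,N)$ gives a left exact sequence $0\to\Hom_\Lambda(R(\beta),N)\to\Hom_\Lambda(P_{s(\alpha)},N)\xrightarrow{a^*}\Hom_\Lambda(R(\alpha),N)$, so $\Hom_\Lambda(R(\beta),N)=\Ker a^*$. Under the identification $\Hom_\Lambda(P_{s(\alpha)},N)\cong N_{s(\alpha)}$, $f\mapsto f(e_{s(\alpha)})$, and using that $a$ is the inclusion $R(\alpha)=\Lambda\alpha\subseteq P_{s(\alpha)}$, the restriction $a^*f$ is determined by $f(\alpha)=\alpha\cdot f(e_{s(\alpha)})=N_\alpha\big(f(e_{s(\alpha)})\big)$; thus $a^*f=0$ iff $f(e_{s(\alpha)})\in\Ker N_\alpha$. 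Therefore $\Hom_\Lambda(R(\beta),N)\cong\Ker N_\alpha$, which completes the argument.

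The only genuinely delicate point is the bookkeeping needed to recognise \eqref{equation 1} in the form above: that the quotient map $b\colon P_{s(\alpha)}\to R(\beta)$ is right multiplication by $\beta$ and that its kernel is exactly $\Lambda\alpha=R(\alpha)$, i.e. $\operatorname{ann}_{\Lambda e_{s(\alpha)}}(\beta)=\Lambda\alpha$. This rests on the gentle hypotheses --- $I$ is generated by length-$2$ relations, and the other arrow out of $s(\alpha)$ (if any) composes nonzero with $\beta$, so no path through it is annihilated by right multiplication by $\beta$ --- and is already contained in the derivation of \eqref{equation 1} recalled in Section~2. Note that Gorenstein projectivity of $N$ is not used in the dimension count itself; it serves only to keep all modules in sight inside $\Gproj\Lambda$, which is the setting in which this lemma will be applied.
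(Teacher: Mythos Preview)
Your argument is correct and in fact cleaner than the paper's. The paper proceeds combinatorially: it reduces to $N$ indecomposable, uses that an indecomposable Gorenstein projective over a gentle algebra is a string module $M(w)$, and then counts that $\dim\widehat N_\alpha$ equals the number of times the string $w$ traverses the arrow $\alpha$, while $\dim\Hom_\Lambda(R(\beta),N)$ equals the number of times $w$ traverses the first arrow $\gamma$ of the string of $R(\beta)$ (or hits $s(\alpha)$ as an endpoint when that string has length zero); the difference with $\dim\Hom_\Lambda(P_{s(\alpha)},N)$ then drops out from the local picture of $Q$ at $s(\alpha)$. You instead apply $\Hom_\Lambda(-,N)$ directly to the short exact sequence \eqref{equation 1} and identify $\Ker a^*$ with $\Ker N_\alpha$ via the standard isomorphism $\Hom_\Lambda(P_{s(\alpha)},N)\cong N_{s(\alpha)}$, which is a one-line homological computation.

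Your route has two advantages: it avoids the case analysis on strings entirely, and---as you note---it is valid for an arbitrary $\Lambda$-module $N$, not just Gorenstein projective ones. The paper's approach, on the other hand, makes the dimensions visible in the string combinatorics, which is sometimes useful when working out explicit examples. One minor remark: in your final paragraph the description of $b$ as ``right multiplication by $\beta$'' is a slight abuse with these left-module conventions, but since you only use the exactness of \eqref{equation 1} (and defer its derivation to Section~2), this does not affect the argument.
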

\begin{proof}
Without losing generality, we assume that $N$ is indecomposable.

(i) is obvious from the definition of $\Phi$. For (ii), by Sequence (\ref{equation 1}), there is an exact sequence
$$0\rightarrow R(\alpha)\xrightarrow{f} P_{s(\alpha)}\xrightarrow{g} R(\beta)\rightarrow0.$$
Note that $N$ is a string module. Let $w$ be the string of $N$. By the definition of $\Phi$ and $\iota$, it is easy to see that $\dim\widehat{N}_\alpha$ is equal to the times of $w$ passing through $\alpha$.
Let $v$ be the string of $R(\beta)$. Then the quiver of $Q$ locally looks like as the following.
\begin{center}
\setlength{\unitlength}{1mm}
\begin{picture}(60,20)
\put(0,15){\circle*{1}}
\put(0,15){\vector(1,0){9}}
\put(10,15){\circle*{1}}
\put(10,15){\vector(1,0){9}}

\put(10,15){\vector(0,-1){9}}
\put(10,5){\circle*{1}}
\put(10,5){\vector(-1,0){9}}
\put(10,15){\vector(1,0){9}}
\put(20,15){\circle*{1}}
\put(22,14){$\cdots$}
\put(28,15){\circle*{1}}
\put(28,15){\vector(1,0){9}}
\put(38,15){\circle*{1}}
\put(4,16){\tiny$\beta$}
\put(14,16){\tiny$\gamma$}
\put(10.5,10){\tiny$\alpha$}
\put(10,16){$\overbrace{\hspace{2.8cm}}^{R(\beta)}$}
\end{picture}
\end{center}
Note that $v$ is maximal and $\Hom_{\Lambda}(P_{s(\alpha)},N)$ is equal to the times of $w$ passing through $s(\alpha)$.
We get that $\dim\Hom_\Lambda(R(\beta),N)$ is equal to the times of $w$ passing through $\gamma$ if the length of $v$ is not zero, or $s(\alpha)$ is an ending point of $v$ if the length of $v$ is zero. So the times of $w$ passing through $\alpha$ is equal to $\dim \Hom_\Lambda(P_{s(\alpha)},N)-\dim \Hom_\Lambda(R(\beta),N)$, which is also equal to $\dim \widehat{N}_\alpha$ for any $\alpha\in Q_1^{cyc}$.
\end{proof}

\begin{proposition}
Suppose that $\cs_{[U]}$ has non-empty intersection with $\overline{\cs_{[N]}}$. Then $\dimv \widehat{U}\leq \dimv \widehat{N}$ componentwise as dimension vectors of representations of $\Gamma$.
\end{proposition}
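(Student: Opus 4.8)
The plan is to read off $\dimv\widehat{U}$ and $\dimv\widehat{N}$ from Lemma~\ref{lemma dimension vector of phi(M)} and to deduce the stated inequality from the upper semicontinuity of $\dim\Hom_\Lambda(-,?)$ along the degeneration of $\Lambda$-modules hidden in the hypothesis $\cs_{[U]}\cap\overline{\cs_{[N]}}\neq\emptyset$. \textbf{Step 1 (reductions).} Since $U$ and $N$ are (isomorphic to) submodules of the Gorenstein projective module $M$, the lemma above on torsionless modules shows they are themselves Gorenstein projective, so that Lemma~\ref{lemma dimension vector of phi(M)} applies to both. Moreover $\cs_{[U]}$ and $\cs_{[N]}$ both lie in $\Gr_e(M)$, hence $\dimv U=e=\dimv N$; in particular $\dim\Hom_\Lambda(P_j,U)=\dim U_j=e_j=\dim N_j=\dim\Hom_\Lambda(P_j,N)$ for every vertex $j\in Q_0$.

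\textbf{Step 2 (extracting the degeneration).} I would use the tautological subrepresentation over $\Gr_e(M)$: it is an algebraic family of $e$-dimensional $\Lambda$-modules parametrized by $\Gr_e(M)$ whose fibre over a point of $\cs_{[X]}$ lies in the $G_e$-orbit $\mathcal{O}_X\subseteq R_e(\Lambda)$. Trivializing this family on an affine chart around a point of $\cs_{[U]}\cap\overline{\cs_{[N]}}$ and invoking continuity of the induced classifying morphism into $R_e(\Lambda)$, one obtains $\mathcal{O}_U\cap\overline{\mathcal{O}_N}\neq\emptyset$, hence $\overline{\mathcal{O}_U}\subseteq\overline{\mathcal{O}_N}$; thus $U$ is a degeneration of $N$, and consequently $\dim\Hom_\Lambda(Z,U)\geq\dim\Hom_\Lambda(Z,N)$ for every $Z\in\mod\Lambda$ by the standard upper semicontinuity of $\Hom$ (Bongartz). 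This last implication may also simply be quoted from \cite{CFR12}.

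\textbf{Step 3 (componentwise comparison).} It then remains to compare the two dimension vectors along $Q_0^{Aus}=Q_0\sqcup Q_1^{cyc}$. On the $Q_0$-part, Lemma~\ref{lemma dimension vector of phi(M)}(i) together with Step 1 gives $\dim\widehat{U}_i=\dim U_i=e_i=\dim N_i=\dim\widehat{N}_i$, i.e.\ equality. For $\alpha\in Q_1^{cyc}$, let $\beta$ be the arrow with $\alpha\beta\in I$; then Lemma~\ref{lemma dimension vector of phi(M)}(ii) yields
\begin{multline*}
\dim\widehat{N}_\alpha-\dim\widehat{U}_\alpha
=\bigl(\dim\Hom_\Lambda(P_{s(\alpha)},N)-\dim\Hom_\Lambda(P_{s(\alpha)},U)\bigr)\\
-\bigl(\dim\Hom_\Lambda(R(\beta),N)-\dim\Hom_\Lambda(R(\beta),U)\bigr).
\end{multline*}
By Step 1 the first bracket vanishes (as $P_{s(\alpha)}$ is projective and $\dimv U=\dimv N$), and by Step 2 applied to $Z=R(\beta)$ the second bracket is $\leq0$. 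Hence $\dim\widehat{U}_\alpha\leq\dim\widehat{N}_\alpha$ for every $\alpha\in Q_1^{cyc}$, which with the equality on $Q_0$ gives $\dimv\widehat{U}\leq\dimv\widehat{N}$ componentwise.

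\textbf{Main obstacle.} The delicate point is that $\dim\widehat{(-)}_\alpha$ is a \emph{difference} of two quantities each of which is merely upper semicontinuous, so semicontinuity alone does not control the sign; the resolution is exactly the observation in Step 1 that the $P_{s(\alpha)}$-term is pinned to the fixed dimension vector $e$ and therefore contributes nothing. The only genuinely geometric input is the passage in Step 2 from the closure relation inside $\Gr_e(M)$ to a module-theoretic degeneration; everything else is the homological bookkeeping of Lemma~\ref{lemma dimension vector of phi(M)}.
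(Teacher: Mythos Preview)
Your proof is correct and follows essentially the same route as the paper's: extract the degeneration $\co_{[U]}\subset\overline{\co_{[N]}}$ from the closure relation in $\Gr_e(M)$ (the paper does this via the bundle $\pi:\Hom^0(e,M)\to\Gr_e(M)$ and cites \cite[Proposition 7.2]{CFR13}), invoke Bongartz' upper semicontinuity of $\Hom$, and then apply Lemma~\ref{lemma dimension vector of phi(M)} componentwise. The paper phrases the final step by noting that $R(\beta)$ is non-projective, whereas you observe directly that the $P_{s(\alpha)}$-term is pinned to $e$; these are the same argument.
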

\begin{proof}
Following \cite[Section 2.3]{CFR12}, we write
$$\Hom^0(e,M):=\{(N,f)|N\in \mod \Lambda,\dimv N=e\mbox{ and } f:N\rightarrow M\mbox{ is injective}\}.$$
Then $\Gr_e(M)$ is isomorphic to the quotient $\Hom^0(e,M)/G_e$. We have a canonical map
$$p:\Hom^0(e,M)\rightarrow R_e(\Lambda)$$
which maps $(N,f)$ to $N$.
We also denote by $\pi: \Hom^0(e,M)\rightarrow \Gr_e(M)$ the locally trivial $G_e$-principle bundle. Then the stratum $\cs_{[N]}$ is then defined by $\pi(p^{-1}(\co_{[N]}))$.

Similar to \cite[Proposition 7.2]{CFR13}, we know that if $\cs_{[U]}$ has non-empty intersection with $\overline{\cs_{[N]}}$, then $\co_{[U]}\subset \overline{\co_{[N]}}$. So $\dim \Hom_\Lambda(P,U)=\dim \Hom_\Lambda(P,N)$ for all projective representations $P$, and $\dim\Hom_\Lambda(X,U)\geq \dim\Hom_\Lambda(X,N)$ for all non-projectives $X$ (see \cite{Bo}).
Note that $\dim U_i=\dim N_i$ for any $i\in Q_0$.
Now consider the dimension vector of $\widehat{U}$, resp. of $\widehat{N}$.
Then Lemma \ref{lemma dimension vector of phi(M)} (i) implies that $\dim \widehat{U}_i=\dim U_i=\dim N_i=\dim \widehat{N}_i$ for any $i\in Q_0\subseteq Q_0^{Aus}$.
For any $\alpha\in Q_1^{cyc}\subset Q_1^{Aus}$, there exists an arrow $\beta$ such that $\alpha\beta\in I$. Lemma \ref{lemma dimension vector of phi(M)} (ii) yields that
\begin{eqnarray*}
\dim \widehat{U}_\alpha&=&\dim\Hom_{\Lambda}(P_{s(\alpha)},U)-\dim \Hom_\Lambda(R(\beta),U)\\
&\leq& \dim \Hom_{\Lambda}(P_{s(\alpha)},N)-\dim \Hom_\Lambda(R(\beta),N),
\end{eqnarray*}
since $R(\beta)$ is not projective.
This proves $\dimv \widehat{U}\leq \dimv\widehat{N}$ componentwise.
\end{proof}

\begin{definition}[\cite{CFR13}]
We call $[N]$ a generic subrepresentation type of $M$ of dimension vector $e$ if the stratum $\cs_{[N]}$ of $\Gr_e(M)$ is open. Denote by $\gsub_e(M)$ the set of all generic subrepresentation types.
\end{definition}

In case $[N]\in\gsub_e(M)$, the closure $\overline{\cs_{[N]}}$ is an irreducible component of $\Gr_e(M)$, and every irreducible component arises in this way.

For any two Gorenstein projective $\Lambda$-modules $M$ and $N$, we consider quiver Grassmannians for $\Gamma$ of form $\Gr_{\dimv \widehat{N}}(\widehat{M})$. For $[N]\in\gsub_e(M)$, we consider the map
$$\pi_{[N]}:\Gr_{\dimv \widehat{N}}(\widehat{M})\rightarrow \Gr_e(M)$$
given by $(F\subset\widehat{M})\mapsto (\res(F)\subset M)$.

\begin{proposition}\label{proposition fibre of pi}
For $M,e$ and $[N]$ as above and a point $(U\subset M)$ in $\Gr_e(M)$, we have an isomorphism
$$\pi_{[N]}^{-1}(U\subset M)\cong \Gr_{\dimv\widehat{N}-\dimv\widehat{U}}(\widehat{M}/\widehat{U}).$$
\end{proposition}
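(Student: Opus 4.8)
The plan is to realize the fibre explicitly as a quiver Grassmannian of a quotient, following the strategy of \cite[Section~7]{CFR13}. Throughout, write $q\colon\widehat M\twoheadrightarrow\widehat M/\widehat U$ for the canonical projection; here $\widehat U=\Phi(U)$, which by Lemma~\ref{lemma property of phi}(ii) is naturally a $\Gamma$-submodule of $\widehat M=\Phi(M)$, with $\widehat U_i=U_i$ for $i\in Q_0$ and $\widehat U_\alpha=M_\alpha(U_i)\subseteq\Im M_\alpha=\widehat M_\alpha$ for $(\alpha\colon i\to j)\in Q_1^{cyc}$. I would produce a mutually inverse pair of morphisms $F\mapsto F/\widehat U$ and $G\mapsto q^{-1}(G)$ between $\pi_{[N]}^{-1}(U\subset M)$ and $\Gr_{\dimv\widehat N-\dimv\widehat U}(\widehat M/\widehat U)$.

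The key step is the observation that \emph{every} $F$ in the fibre contains $\widehat U$. A point of $\pi_{[N]}^{-1}(U\subset M)$ is a $\Gamma$-submodule $F\subseteq\widehat M$ with $\dimv F=\dimv\widehat N$ and $\res(F)=U$; since $\res$ only records the components at the vertices $i\in Q_0$, the last condition means precisely $F_i=U_i$ for all $i\in Q_0$. Then $F_i=U_i=\widehat U_i$, and for $(\alpha\colon i\to j)\in Q_1^{cyc}$ the submodule condition forces $N_{\alpha^+}(F_i)\subseteq F_\alpha$; as $N_{\alpha^+}\colon M_i\to\Im M_\alpha$ is the surjection induced by $M_\alpha$, it carries $U_i$ onto $M_\alpha(U_i)=\widehat U_\alpha$, whence $\widehat U_\alpha\subseteq F_\alpha$. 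Hence $\widehat U\subseteq F$. This is the only place where the particular shape of $\widehat M$ enters (the surjection $M_i\twoheadrightarrow\Im M_\alpha$ defining $N_{\alpha^+}$), and I expect it to be the main point; the remainder is dimension bookkeeping.

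Granting this, $F\mapsto F/\widehat U$ carries the fibre into the set of $\Gamma$-submodules of $\widehat M/\widehat U$, with $\dimv(F/\widehat U)=\dimv F-\dimv\widehat U=\dimv\widehat N-\dimv\widehat U$, hence into $\Gr_{\dimv\widehat N-\dimv\widehat U}(\widehat M/\widehat U)$. Conversely, for a $\Gamma$-submodule $G\subseteq\widehat M/\widehat U$ with $\dimv G=\dimv\widehat N-\dimv\widehat U$, put $F=q^{-1}(G)$; this is a $\Gamma$-submodule of $\widehat M$ containing $\widehat U$ with $\dimv F=\dimv\widehat N$. Since $[N]\in\gsub_e(M)$ and $(U\subset M)\in\Gr_e(M)$ both give $\dimv N=e=\dimv U$, for each $i\in Q_0$ one computes, using Lemma~\ref{lemma dimension vector of phi(M)}(i),
$$\dim F_i=\dim\widehat U_i+\dim G_i=\dim U_i+(\dim N_i-\dim U_i)=\dim U_i;$$
together with $F_i\supseteq U_i$ this forces $F_i=U_i$, i.e.\ $\res(F)=U$, so $F$ lies in the fibre. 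By the correspondence theorem for submodules, the two assignments are mutually inverse bijections.

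Finally I would note that both assignments are morphisms of varieties (schemes): they are the restrictions of the standard algebraic maps relating subrepresentations of $\widehat M$ containing the fixed subrepresentation $\widehat U$ to subrepresentations of $\widehat M/\widehat U$ — pass to the quotient, respectively take the preimage under $q$. Being mutually inverse on points, they yield the asserted isomorphism $\pi_{[N]}^{-1}(U\subset M)\cong\Gr_{\dimv\widehat N-\dimv\widehat U}(\widehat M/\widehat U)$. This last verification is entirely parallel to the corresponding one in \cite{CFR13} and may be cited rather than repeated.
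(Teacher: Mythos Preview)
Your overall structure matches the paper's: both reduce to showing that the fibre is $\{F\subseteq\widehat M:\dimv F=\dimv\widehat N,\ \widehat U\subseteq F\}$, then invoke the correspondence between submodules of $\widehat M$ containing $\widehat U$ and submodules of $\widehat M/\widehat U$. The difference lies in the key step, the implication $\res(F)=U\Rightarrow\widehat U\subseteq F$. You argue directly and componentwise: from $F_i=U_i$ and the submodule condition $N_{\alpha^+}(F_i)\subseteq F_\alpha$, together with the fact that $N_{\alpha^+}$ is the surjection $M_i\twoheadrightarrow\Im M_\alpha$ sending $U_i$ onto $\widehat U_\alpha$, you obtain $\widehat U_\alpha\subseteq F_\alpha$. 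The paper instead decomposes $F$ into indecomposable summands, uses that $\res(F)$ is Gorenstein projective to classify the possible string types of these summands (eight cases), and then exhibits a natural injection $\Phi(\res(F))\hookrightarrow F$. Your route is more elementary and avoids the string classification and the Gorenstein projective hypothesis at this step; the paper's route is more structural and makes visible the shape of the indecomposable pieces of $F$. Both are correct, and your argument is the cleaner of the two.
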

\begin{proof}
More precisely, we prove that
$$\pi_{[N]}^{-1}(U\subset M)\cong \{F\subset \widehat{M}| \dimv F=\dimv \widehat{N},\widehat{U}\subset F\}.$$

By definition of the map $\pi_{[N]}$, this immediately reduces to the following statement:

\noindent Suppose we are given a subrepresentation $U\subset M$ of dimension vector $e$ and a subobject $F\subset\widehat{M}$ such that $\dimv F=\dimv \widehat{N}$. Then we have $\pi^-(F)=U$ if and only if $\widehat{U}\subset F$.

So suppose $\dimv F=\dimv\widehat{N}$ and $\widehat{U}\subset F$. Then $U=\pi^-(\widehat{U})\subset \pi^-(F)$ and
$$\dim U_i=\dim \widehat{U}_i=\dim F_i=\dim (\pi^-(F))_i,$$
for any $i\in Q_0$, and thus $U=\pi^-(F)$.

Conversely, suppose that $\pi^-(F)=U$ and $F\subset \widehat{M}$.
Since $M$ is a Gorenstein projective module, $\pi^-\widehat{M}=M$, and $\pi^-$ is exact, we get that
$\pi^-(F)\subseteq M$, and then $\pi^-(F)$ is also a Gorenstein projective $\Lambda$-module.
We denote by $F=\oplus F_i$, where $F_i$ are the indecomposable summands of $F$.
$F\subset \widehat{M}$ implies that $F_i$ can be the simple $\Gamma$-module $S_\alpha$ for any $\alpha\in Q_1^{cyc}$.
We denote by $w_i$ be the string of $F_i$. Then $w_i$ must be one of the following forms
\begin{eqnarray*}
&(1)\,\, \iota(\alpha_l\dots \alpha_1);\quad\quad\quad\quad\quad\quad\,\, &\quad (2) \,\,\iota(\beta_m\dots\beta_1\gamma_1^{-1}\dots \gamma_n^{-1});\\
&(3) \,\,\iota(\alpha_l\dots \alpha_1)\alpha^-;\quad\quad\quad\quad \quad  &\quad (4) \,\,(\beta^-)^{-1} \iota(\alpha_l\dots \alpha_1);\\
&(5) \,\,(\beta^-)^{-1} \iota(\alpha_l\dots \alpha_1) \alpha^-;\quad\quad &\quad (6) \,\,(\alpha^-)^{-1}\iota(\beta_m\dots\beta_1\gamma_1^{-1}\dots \gamma_n^{-1});\\
&(7)\,\, \iota(\beta_m\dots\beta_1\gamma_1^{-1}\dots \gamma_n^{-1})\beta^-;\, &\quad (8)\,\, (\alpha^-)^{-1}\iota(\beta_m\dots\beta_1\gamma_1^{-1}\dots \gamma_n^{-1})\beta^-,
\end{eqnarray*}
since $\pi^-(F_i)$ is Gorenstein projective. From the above, we get that there exists a natural injective morphism $h:\Phi(\pi^-(F))\rightarrow F$.
In fact, $\pi^-$ is exact, we get that $\pi^-(\ker h)=0$, since $\pi^-(h)=\Id$. Then $\ker h\cong \bigoplus_{\alpha\in Q_1^{cyc}}S_\alpha^{n_\alpha}$. However, $S_\alpha$ can not be a submodule of $\Phi(\pi^-(F))$, so $\ker h=0$, and then $h$ is injective.
So $\widehat{U}=\Phi(\pi^-(F))$ is a submodule of $F$.
\end{proof}

We can now easily derive the main general geometric properties of the map $\pi_{[N]}$:

\begin{theorem}
For all $M$ and $e$ as above and a generic subrepresentation type $[N]\in\gsub_e(M)$, the following holds:

(i) The varieties $\Gr_{\dimv \widehat{N}}(\widehat{M})$ is smooth with irreducible equidimensional connected components;

(ii) the map $\pi_{[N]}$ is projective;

(iii) the image of $\pi_{[N]}$ is closed in $\Gr_e(M)$ and contains $\overline{\cs_{[N]}}$;

(iv) the map $\pi_{[N]}$ is one-to-one over $\cs_{[N]}$.
\end{theorem}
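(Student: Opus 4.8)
The plan is to deduce the four assertions by following the strategy of \cite[Section 7]{CFR13}, exploiting the geometric results already established. Part (i) is immediate: by Theorem \ref{theorem smoothness of quiver grassmannians}, the module $\widehat{M}=\Phi(M)$ satisfies all the hypotheses of Theorem \ref{lemma smooth of quiver Grassmannian} (global dimension of $\Gamma$ at most two, $\pd_\Gamma\Phi(M)\le 1$, $\id_\Gamma\Phi(M)\le 1$, and $\Ext^1_\Gamma(\Phi(M),\Phi(M))=0$), hence $\Gr_{\dimv\widehat{N}}(\widehat{M})$ is smooth and reduced with irreducible, equidimensional connected components. I would simply cite this.

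For (ii), projectivity of $\pi_{[N]}$, the idea is that $\pi_{[N]}$ factors through the natural closed embedding of $\Gr_{\dimv\widehat{N}}(\widehat{M})$ into the ordinary $Q_0^{Aus}$-graded Grassmannian $\Gr_{\dimv\widehat{N}}(\widehat{M})$ of subspaces, followed by the map to $\Gr_e(M)$ that restricts along $\res$; since both the source and the target are projective varieties and the map is a morphism of varieties, it is automatically projective. More precisely, one checks that $\pi_{[N]}$ is a well-defined morphism: given a point $(F\subseteq\widehat{M})$, the restriction $\res(F)$ is a $\Lambda$-submodule of $\res(\widehat{M})\cong M$ of dimension vector $e$ (the dimension-vector bookkeeping here is exactly Lemma \ref{lemma dimension vector of phi(M)} together with the description of $\pi^-$ on strings), and this assignment is algebraic. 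Properness/projectivity then follows because a morphism between projective varieties over $K$ is projective.

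For (iii), closedness of the image follows from (ii): the image of a projective (hence proper) morphism is closed. To see that the image contains $\overline{\cs_{[N]}}$, I would first show it contains $\cs_{[N]}$ itself — for $U\cong N$ the fibre $\pi_{[N]}^{-1}(U\subseteq M)\cong\Gr_{\dimv\widehat{N}-\dimv\widehat{U}}(\widehat{M}/\widehat{U})$ of Proposition \ref{proposition fibre of pi} is nonempty because $\dimv\widehat{N}-\dimv\widehat{U}=0$, so $\widehat{U}$ itself is the unique point in the fibre — and then, since the image is closed, it contains the closure $\overline{\cs_{[N]}}$.

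Part (iv), bijectivity over $\cs_{[N]}$, is the crux and will be the main obstacle. The point just made shows that for $U\cong N$ the fibre $\pi_{[N]}^{-1}(U\subseteq M)$ is $\Gr_0(\widehat{M}/\widehat{U})$, which is a single reduced point, namely $\widehat{U}=\Phi(U)\subseteq\widehat{M}$; so set-theoretically $\pi_{[N]}$ is one-to-one over $\cs_{[N]}$. The subtlety — and where I expect to spend the real effort — is in verifying that $\dimv\widehat{N}-\dimv\widehat{U}$ really is the zero vector, i.e. that $\dimv\widehat{U}=\dimv\widehat{N}$ whenever $U\cong N$; this is built into Lemma \ref{lemma dimension vector of phi(M)} since $\widehat{U}$ and $\widehat{N}$ are determined by $U\cong N$, so it is clean here, but one must make sure the isomorphism $U\cong N$ genuinely forces $\Phi(U)\cong\Phi(N)$ with equal dimension vectors (immediate from functoriality of $\Phi$). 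The remaining care is to confirm that $\cs_{[N]}$ is in the image as a set and that no other submodule $U\not\cong N$ with the same restriction occurs — but by Proposition \ref{proposition fibre of pi}, the fibre over $(U\subseteq M)$ with $\dimv\widehat{U}=\dimv\widehat{N}$ forces $U\cong N$ precisely when $[N]$ is the generic type, which is where the genericity hypothesis $[N]\in\gsub_e(M)$ gets used.
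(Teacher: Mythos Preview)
Your argument is essentially the paper's own: (i) from Theorem~\ref{theorem smoothness of quiver grassmannians}, (ii) because the source is a projective variety, (iii) from properness together with Proposition~\ref{proposition fibre of pi} showing the fibre over a point of $\cs_{[N]}$ is nonempty, and (iv) from the special case $U\cong N$ of Proposition~\ref{proposition fibre of pi} giving $\Gr_0(\widehat{M}/\widehat{U})=\{\text{pt}\}$.

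One correction: your final paragraph overcomplicates (iv) and misattributes the role of genericity. Once you know the fibre over $(U\subset M)\in\cs_{[N]}$ is $\Gr_{0}(\widehat{M}/\widehat{U})$, you are done---this is a single reduced point and there is nothing further to check about ``other submodules $U\not\cong N$ with the same restriction.'' The hypothesis $[N]\in\gsub_e(M)$ is \emph{not} used anywhere in the proof of (i)--(iv); it only becomes relevant in the subsequent Corollary, where one needs $\overline{\cs_{[N]}}$ to be an irreducible component of $\Gr_e(M)$ so that the restricted map is a genuine desingularization. So drop the last two sentences and your proof matches the paper's.
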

\begin{proof}
(i) follows from Theorem \ref{theorem smoothness of quiver grassmannians} directly. The map $\pi_{[N]}$ is projective since $\Gr_{\dimv\widehat{N}}(\widehat{M})$ is projective, so (ii) is valid.

For (iii), given a generic embedding $N\subset M$, we also have $\widehat{N}\subset \widehat{M}$ since $\Phi$ preserves injective morphisms. The image of $\pi_{[N]}$ is closed since it is proper. Then
Proposition \ref{proposition fibre of pi} implies that the fibre over $N\subset M$ is non-empty. So the image of $\pi_{[N]}$ contains $\cs_{[N]}$, and then $\overline{\cs_{[N]}}$. In particular, the fibre over a point of $\cs_{[N]}$ reduces to a single point is the special case $U=N$ of Proposition \ref{proposition fibre of pi}.
\end{proof}
\begin{corollary}\label{corollary desingularization map}
For arbitrary $M$ and $e$ as above, the closure $\overline{\cs_{[\widehat{N}]}}$ is an irreducible component of $\Gr_{\dimv \widehat{N}}(\widehat{M})$, and the map
$$\pi=\bigsqcup_{[N]\in\gsub_e(M)}\pi_{[N]}:\bigsqcup_{[N]\in\gsub_e(M)}\overline{\cs_{\widehat{N}}}\rightarrow \Gr_e(M)$$
given by the restrictions of the $\pi_{[N]}$ to $\overline{\cs_{\widehat{N}}}$ is a desingularization of $\Gr_e(M)$.
\end{corollary}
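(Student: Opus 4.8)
\textbf{Proof proposal for Corollary \ref{corollary desingularization map}.}

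The plan is to verify the definition of a desingularization for each connected piece and then to assemble the pieces. Recall that a desingularization of a variety $X$ is a proper morphism $\pi\colon \widetilde{X}\to X$ from a smooth variety $\widetilde{X}$ that is an isomorphism over a dense open subset of $X$; here $X=\Gr_e(M)$, which need not be irreducible, so we must do this component by component. First I would fix $[N]\in\gsub_e(M)$ and show that $\overline{\cs_{[\widehat{N}]}}$ is an irreducible component of $\Gr_{\dimv\widehat{N}}(\widehat{M})$: by Theorem \ref{theorem smoothness of quiver grassmannians}(ii), applied to the Gorenstein projective module $M$ and the dimension vector $\dimv\widehat{N}$ for $Q^{Aus}$, the quiver Grassmannian $\Gr_{\dimv\widehat{N}}(\widehat{M})$ is smooth with irreducible equidimensional connected components, so each of its connected components is in fact an irreducible component, and it suffices to identify $\overline{\cs_{[\widehat{N}]}}$ with the connected component containing the open dense stratum $\cs_{[\widehat{N}]}$ inside the relevant component (the stratum $\cs_{[\widehat{N}]}$ being the locus of subrepresentations isomorphic to $\widehat{N}$, which is open in $\Gr_{\dimv\widehat{N}}(\widehat{M})$ because $\widehat{N}$ has no self-extensions over $\Gamma$ by Lemma \ref{lemma rigid of modules}, hence is a rigid object whose isomorphism stratum is open). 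This is the same argument as in \cite{CFR13} and I would cite it.

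Next I would restrict $\pi_{[N]}$ to $\overline{\cs_{[\widehat{N}]}}$. Since $\overline{\cs_{[\widehat{N}]}}$ is closed in the projective variety $\Gr_{\dimv\widehat{N}}(\widehat{M})$ it is itself projective, so the restricted map is projective, hence proper. Smoothness of the source is immediate because $\overline{\cs_{[\widehat{N}]}}$ is a connected (equivalently, irreducible) component of a smooth variety. The remaining point is that $\pi_{[N]}$ restricted to $\overline{\cs_{[\widehat{N}]}}$ is a birational (in fact bijective) morphism onto $\overline{\cs_{[N]}}$: by part (iii) of the preceding theorem the image is closed and contains $\overline{\cs_{[N]}}$, and since $\cs_{[N]}$ is dense in $\overline{\cs_{[N]}}$ while by (iv) the map is one-to-one over $\cs_{[N]}$, the image is exactly $\overline{\cs_{[N]}}$ and the map is an isomorphism over the dense open subset $\cs_{[N]}\subseteq\overline{\cs_{[N]}}$ (here one also uses that $\overline{\cs_{[\widehat{N}]}}$ is irreducible and $\pi_{[N]}$ generically one-to-one between irreducible varieties of the same dimension, so it is birational; the equality of dimensions follows from the fibre computation in Proposition \ref{proposition fibre of pi} with $U=N$ giving a single reduced point). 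Thus $\pi_{[N]}\colon\overline{\cs_{[\widehat{N}]}}\to\overline{\cs_{[N]}}$ is a desingularization.

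Finally I would assemble the global map $\pi=\bigsqcup_{[N]\in\gsub_e(M)}\pi_{[N]}$. By the remark following the definition of $\gsub_e(M)$, the irreducible components of $\Gr_e(M)$ are exactly the closures $\overline{\cs_{[N]}}$ for $[N]\in\gsub_e(M)$, so the disjoint union $\bigsqcup_{[N]}\overline{\cs_{[\widehat{N}]}}$ is a smooth variety mapping onto $\Gr_e(M)$, the map is proper since each $\pi_{[N]}$ is, and it is an isomorphism over the open dense subset $\bigsqcup_{[N]}\bigl(\cs_{[N]}\setminus\bigcup_{[N']\neq[N]}\overline{\cs_{[N']}}\bigr)$ of the smooth locus of $\Gr_e(M)$, because each $\pi_{[N]}$ is an isomorphism over $\cs_{[N]}$ and the $\cs_{[N]}$ generically separate the components. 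I expect the main obstacle to be the careful bookkeeping in this last step: one has to check that the strata $\cs_{[N]}$ for distinct generic types do not create spurious overlaps that would obstruct birationality of $\pi$ over each component, and that the source is genuinely smooth as a whole (which is fine here precisely because it is a disjoint union of components of smooth varieties, rather than their union inside $\Gr_{\dimv\widehat{N}}(\widehat{M})$). All the hard analytic input — smoothness, the projectivity of $\pi_{[N]}$, the fibre description, and the one-to-one property over $\cs_{[N]}$ — has already been established in Theorem \ref{theorem smoothness of quiver grassmannians}, Proposition \ref{proposition fibre of pi} and the theorem preceding this corollary, so the proof is essentially a formal consequence, following \cite[Section 7]{CFR13} verbatim.
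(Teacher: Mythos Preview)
Your proposal is correct and follows essentially the same route as the paper's proof: both deduce that $\overline{\cs_{[\widehat{N}]}}$ is an irreducible (hence smooth) component of $\Gr_{\dimv\widehat{N}}(\widehat{M})$ from the rigidity $\Ext^1_\Gamma(\widehat{N},\widehat{N})=0$, and then combine irreducibility of the image with parts (ii)--(iv) of the preceding theorem to conclude that each $\pi_{[N]}$ restricts to a birational proper map onto $\overline{\cs_{[N]}}$. The only cosmetic difference is that the paper phrases the first step via the dimension count $\dim\cs_{[\widehat{N}]}=\dim\Hom_\Gamma(\widehat{N},\widehat{M}/\widehat{N})=\dim\Gr_{\dimv\widehat{N}}(\widehat{M})$ rather than invoking openness of the rigid stratum directly, and it leaves the final assembly over $\gsub_e(M)$ implicit where you spell it out.
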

\begin{proof}
The stratum $\cs_{[\widehat{N}]}$ of $\Gr_{\dimv \widehat{N}}(\widehat{M})$ is irreducible and locally closed, of dimension
$$\dim \Hom_\Gamma(\widehat{N},\widehat{M})-\dim \End_\Gamma(\widehat{N})=\dim \Hom_\Gamma(\widehat{N},\widehat{M}/\widehat{N})=\dim \Gr_{\dimv \widehat{N}}(\widehat{M})$$
as in the proof of \cite[Proposition 7.1]{CFR13} since $\Ext^1(\widehat{N},\widehat{N})=0$, so its closure is a connected (irreducible) component of $\Gr_{\dimv\widehat{N}}(\widehat{N})$ and thus a smooth variety.
The image of this component under $\pi_{[N]}$ is thus an irreducible closed subvariety of $\Gr_e(M)$ containing its irreducible component $\overline{\cs_{[N]}}$, and thus it equals $\overline{\cs_{[N]}}$. Together with the other properties of the previous theorem, this implies that $\pi$ is a desingularization.
\end{proof}

\begin{remark}
Following \cite[Remark 7.8]{CFR13}, we conjecture that $\Gr_{\dimv \widehat{N}}(\widehat{M})$ is actually irreducible. This would imply that the constructions of desingularizations of the previous corollaries could be unified to the map
$$\pi= \bigsqcup_{[N]\in\gsub_e(M)}\pi_{[N]}:\bigsqcup_{[N]\in\gsub_e(M)}\Gr_{\dimv \widehat{N}}(\widehat{M}) \rightarrow \Gr_e(M)$$
being a desingularization.
\end{remark}

\begin{corollary}
Let $\Lambda$ be a self-injective gentle algebra. Then the desingularization map $\pi$ defined in Corollary \ref{corollary desingularization map} is defined for any finite generated $\Lambda$-module $M$ and dimension vector $e$.
\end{corollary}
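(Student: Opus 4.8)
The plan is to deduce the statement directly from Corollary~\ref{corollary desingularization map}, whose two standing hypotheses are that $\Lambda$ be a $1$-Gorenstein gentle algebra and that the module $M$ be Gorenstein projective; I will check that, for $\Lambda$ self-injective gentle, both hypotheses hold automatically, with no restriction on $M$ other than finite generation. First I would record that a self-injective finite dimensional algebra satisfies $\id{}_\Lambda\Lambda=\id\Lambda_\Lambda=0$, so $\Gd\Lambda=0$; since the paper calls an algebra $d$-Gorenstein as soon as $\Gd\le d$, a self-injective gentle algebra is in particular a $1$-Gorenstein gentle algebra. Hence the entire apparatus of Sections~3 and~4 — the functor $\Phi$, Theorem~\ref{theorem smoothness of quiver grassmannians}, and Corollary~\ref{corollary desingularization map} — applies verbatim to $\Lambda$ and its Cohen-Macaulay Auslander algebra $\Gamma$.

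Second, I would invoke the fact already recorded in Section~2 (and equivalent to the case $d=0$ of Theorem~\ref{theorem characterize of gorenstein property}) that a self-injective algebra has $\Gproj\Lambda=\mod\Lambda$: since projective and injective $\Lambda$-modules coincide, the projective resolution of any $M$ spliced with an injective copresolution is a complete resolution, and it stays exact under $\Hom_\Lambda(-,\Lambda)$ because $\Lambda$ is injective. Thus every finitely generated $\Lambda$-module is Gorenstein projective. For internal consistency one should also note that every gentle algebra is CM-finite by Theorem~\ref{theorem Kalck}; combined with $\Gproj\Lambda=\mod\Lambda$ this forces a self-injective gentle algebra to be representation-finite, so $\Gamma=\Aus(\Gproj\Lambda)$ is genuinely a finite dimensional algebra and the desingularization picture is well-posed.

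With these observations the conclusion is immediate: for an arbitrary finitely generated $\Lambda$-module $M$ and an arbitrary dimension vector $e$, the module $M$ lies in $\Gproj\Lambda$, so Corollary~\ref{corollary desingularization map} supplies the desingularization
$$\pi=\bigsqcup_{[N]\in\gsub_e(M)}\pi_{[N]}\colon\bigsqcup_{[N]\in\gsub_e(M)}\overline{\cs_{\widehat N}}\longrightarrow\Gr_e(M)$$
of $\Gr_e(M)$, with no hypothesis on $M$ beyond finite generation. This is exactly the assertion of the corollary.

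The only genuine point of care — the ``main obstacle'', though it is one of bookkeeping rather than of substance — is to confirm that the conventions line up: that an algebra of Gorenstein dimension $0$ still qualifies as ``$1$-Gorenstein gentle'' in the precise sense used throughout Sections~3–4, and that CM-finiteness of gentle algebras legitimizes passing to the finite dimensional algebra $\Gamma$ so that $\Gr_e(M)$ and the $\pi_{[N]}$ make sense. Once this is pinned down, there is nothing further to prove.
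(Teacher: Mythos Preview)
Your proposal is correct and takes essentially the same approach as the paper: the paper's own proof is the two-line observation that self-injectivity gives $\Gproj\Lambda=\mod\Lambda$, whence Corollary~\ref{corollary desingularization map} applies to every finitely generated $M$. Your additional verification that self-injective implies $1$-Gorenstein and the remark on CM-finiteness are correct bookkeeping that the paper leaves implicit.
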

\begin{proof}
Since $\Lambda$ is self-injective, we get that $\mod\Lambda=\Gproj\Lambda$. The statement follows from Corollary \ref{corollary desingularization map} immediately.
\end{proof}

In Section 5, we can see that even for self-injective gentle algebras, their quiver Grassmannians can be singular varieties.

\section{Examples}

\subsection{Self-injective gentle Nakayama algebras}
Let $Q$ be the following left quiver and $\Lambda=KQ/\langle \beta\alpha,\gamma\beta,\alpha\gamma\rangle$. Then its Cohen-Macaulay Auslander algebra $\Gamma=KQ^{Aus}/\langle I^{Aus}\rangle$, where $Q^{Aus}$ is as the following right quiver shows, and the ideal $I^{Aus}=\{ \beta^+\alpha^-,\gamma^+\beta^-,\alpha^+\gamma^- \}$.
\setlength{\unitlength}{1.2mm}
\begin{center}
\begin{picture}(80,30)

\put(10,10){\circle{1.3}}
\put(30,10){\circle{1.3}}
\put(20,20){\circle{1.3}}
\put(11,11){\vector(1,1){8}}
\put(21,19){\vector(1,-1){8}}
\put(29,10){\vector(-1,0){18}}
\put(9,7){$1$}
\put(19,21){$2$}
\put(29,7){$3$}
\put(13,16){$\alpha$}
\put(25,16){$\beta$}
\put(19,7){$\gamma$}

\put(50,10){\circle{1.3}}
\put(50,11){\vector(0,1){8}}
\put(60,25){\circle{1.3}}
\put(60,5){\circle{1.3}}
\put(51,20.5){\vector(2,1){8}}
\put(50,20){\circle{1.3}}
\put(61,24.5){\vector(2,-1){8}}

\put(70,10){\circle{1.3}}
\put(69,9.5){\vector(-2,-1){8}}
\put(70,20){\circle{1.3}}
\put(70,19){\vector(0,-1){8}}
\put(59,5.5){\vector(-2,1){8}}

\put(47,9){\tiny$1$}
\put(47,19){\tiny$\alpha$}
\put(59,26){\tiny$2$}
\put(59,2.5){\tiny$\gamma$}
\put(71,19){\tiny$\beta$}
\put(71,9){\tiny$3$}
\put(50.5,14){\tiny$\alpha^+$}
\put(54,20.5){\tiny$\alpha^-$}
\put(62.5,20){\tiny$\beta^+$}
\put(66.5,14){\tiny$\beta^-$}
\put(63.5,8){\tiny$\gamma^+$}
\put(54.5,8){\tiny$\gamma^-$}

\put(-10,-3){Figure 4. The quiver of $\Lambda$ and its Cohen-Macaulay Auslander algebra.}
\end{picture}
\vspace{0.5cm}
\end{center}

Let $M=P_3\oplus P_3\oplus S_1\oplus S_3$. Let $e=\dimv S_3+2\dimv S_1$. We consider the quiver Grassmannian $X=\Gr_{e}(M)$. For simplicity, we restrict our attention to the full subquiver of $Q$ formed by vertices $1,3$. Choosing appropriate basis, the representation $M$ can be written as
$$\xymatrix{ K^3 && K^3.\ar[ll]_{\tiny\left( \begin{array}{ccc} 1 &0 &0\\
0&1&0\\0&0&0 \end{array}\right)}}$$
In this way, the dimension vector $e$ equals $(2,1)$, thus, identifying $\Gr_2(K^3)$ with $\P^2$, the quiver Grassmannian $X$ can be realized as
$$\{((a_0:a_1:a_2),(b_0:b_1:b_2))\in \P^2\times \P^2| a_0b_0+a_1b_1=0\},$$
which is a singular projective variety of dimension three.

We also restrict our attention to the full subquiver of $Q^{Aus}$ formed by vertices $1,3,\gamma$. The $\Gamma$-module $\widehat{M}=\Phi(M)$ admits the following explicit form:
$$\xymatrix{ K^3 && K^2 \ar[ll]_{\tiny\left( \begin{array}{ccc} 1 &0\\
0&1\\0&0 \end{array}\right)}&& K^3.\ar[ll]_{\tiny\left( \begin{array}{ccc} 1 &0 &0\\
0&1&0 \end{array}\right)} }$$

The only generic subrepresentation type being $N=P_3\oplus S_1$, we thus have to consider subrepresentations of dimension vector
$\dimv \widehat{N}=2\dimv S_1+\dimv S_3+\dimv S_\gamma$ of $\widehat{M}$. Let $Y=\Gr_{\dimv \widehat{N}}(\widehat{M})$. Then the quiver Grassmannian $Y$ can be realized as
$$\{((a_0:a_1:a_2),(c_0:c_1),(b_0:b_1:b_2)\in \P^2\times \P^1\times \P^2|$$
$$a_0c_0+a_1c_1=0,b_1c_0=b_0c_1\},$$
with the desingularization map being the projection to the first and third components.

\subsection{Cluster-tilted algebra of type $\A_n$}
Let $Q$ be the quiver as the following left diagram shows. Let $I=\{\alpha_3\alpha_2,\alpha_2\alpha_1,\alpha_1\alpha_3,\alpha_5\alpha_4,\alpha_6\alpha_5,\alpha_4\alpha_6 \}$.
Then $\Lambda=KQ/\langle I\rangle$ is a cluster-tilted algebra of type $\A_5$.
Let $\Gamma=\Aus(\Gproj\Lambda)$ be the Cohen-Macaulay Auslander algebra. Then its quiver is as the following right diagram shows.

\begin{center}\setlength{\unitlength}{0.7mm}
 \begin{picture}(200,70)
 \put(0,10){\begin{picture}(80,80)
\put(40,0){\circle{2}}
\put(42,0){\vector(1,0){26}}
\put(70,0){\circle{2}}
\put(68,2){\vector(-1,1){12}}
\put(55,15){\circle{2}}
\put(57,17){\vector(1,1){11}}
\put(53,13){\vector(-1,-1){11}}
\put(40,30){\circle{2}}
\put(42,28){\vector(1,-1){12}}
\put(70,30){\circle{2}}
\put(68,30){\vector(-1,0){26}}

\put(39,-5){\small $4$}
\put(69,-5){\small $5$}
\put(54,17){\small $3$}
\put(39,32){\small $2$}
\put(70,32){\small $1$}

\put(54,32){\tiny $\alpha_1$}
\put(54,-3){\tiny $\alpha_5$}
\put(43,9){\tiny $\alpha_4$}
\put(62,9){\tiny $\alpha_6$}
\put(43,21){\tiny $\alpha_2$}
\put(62,20){\tiny $\alpha_3$}
\end{picture}}

\setlength{\unitlength}{0.5mm}
\put(120,0){\begin{picture}(80,80)
\put(40,30){\circle{2}}
\put(40,28){\vector(0,-1){11}}
\put(70,30){\circle{2}}
\put(68,32){\vector(-1,1){12}}
\put(55,45){\circle{2}}
\put(53,43){\vector(-1,-1){11}}
\put(57,47){\vector(1,1){11}}
\put(70,60){\circle{2}}
\put(70,62){\vector(0,1){11}}
\put(70,75){\circle{2}}
\put(68,77){\vector(-1,1){11}}
\put(55,90){\circle{2}}
\put(53,88){\vector(-1,-1){11}}
\put(40,75){\circle{2}}
\put(40,73){\vector(0,-1){11}}
\put(40,60){\circle{2}}
\put(42,58){\vector(1,-1){11}}

\put(40,15){\circle{2}}
\put(42,13){\vector(1,-1){11}}
\put(55,0){\circle{2}}
\put(57,2){\vector(1,1){11}}
\put(70,15){\circle{2}}
\put(70,17){\vector(0,1){11}}

\multiput(53,86)(-1.4,-2.8){10}{$\cdot$}
\multiput(53,86)(1.4,-2.8){10}{$\cdot$}
\multiput(41,58)(3,0){10}{$\cdot$}
\multiput(68,26)(-1.4,-2.8){10}{$\cdot$}
\multiput(41,26)(1.4,-2.8){10}{$\cdot$}
\multiput(41,28)(3,0){10}{$\cdot$}

\put(34,13){\small $4$}
\put(72,13){\small $5$}
\put(72,74){\small $1$}
\put(54,47){\small $3$}
\put(34,73){\small $2$}

\put(72,58){\small 6}
\put(54,92){\small 7}
\put(34,58){\small 8}
\put(34,28){\small 9}
\put(52,-6){\small 10}
\put(71,28){\small 11}
\end{picture}}

\put(0,-14){Figure 5. Cluster-tilted algebra of type $\A_5$ and its Cohen-Macaulay Auslander algebra.}
\end{picture}
\vspace{0.8cm}
\end{center}

Let $M=P_3\oplus R(\alpha_2)\oplus R(\alpha_6)$. Then $M$ is a Gorenstein projective module. Let $e=\dimv P_3+ \dimv S_1+\dimv S_4$. We consider the quiver Grassmannian $X=\Gr_{e}(M)$. For simplicity, we restrict our attention to the full subquiver of $Q$ formed by vertices $1,3,4$. Choosing appropriate basis, the representation $M$ can be written as
$$\xymatrix{ K^3 && K^3\ar[ll]_{\tiny\left( \begin{array}{ccc} 0 &0 &0\\
0&1&0\\0&0&1 \end{array}\right)} \ar[rr]^{ \tiny\left( \begin{array}{ccc} 1 &0 &0\\
0&0&0\\0&0&1 \end{array}\right)} && K^3 .}$$
In this way, the dimension vector $e$ equals $(2,1,2)$, thus, identifying $\Gr_2(K^3)$ with $\P^2$, the quiver Grassmannian $X$ can be realized as
$$\{((a_0:a_1:a_2),(b_0:b_1:b_2),(c_0:c_1:c_2))\in \P^2\times \P^2\times \P^2| a_1b_1+a_2b_2=0,b_0c_0+b_2c_2=0\},$$
which is a singular projective variety of dimension four.

We also restrict our attention to the full subquiver of $Q^{Aus}$ formed by vertices $1,3,4,6,9$. The $\Gamma$-module $\widehat{M}=\Phi(M)$ admits the following explicit form:
$$\xymatrix{ K^3 && K^2 \ar[ll]_{\tiny\left( \begin{array}{ccc} 0 &0\\
1&0\\0&1 \end{array}\right)}&& K^3\ar[ll]_{\tiny\left( \begin{array}{ccc} 0 &1 &0\\
0&0&1 \end{array}\right)} \ar[rr]^{ \tiny\left( \begin{array}{ccc} 1 &0 &0\\
0&0&1 \end{array}\right)} && K^2\ar[rr]^{ \tiny\left( \begin{array}{ccc} 1 &0 \\
0&0\\0&1 \end{array}\right)} &&K^3.}$$

The only generic subrepresentation type being $N=P_3\oplus S_1\oplus S_4$, we thus have to consider subrepresentations of dimension vector
$\dimv \widehat{N}=2\dimv S_1+2\dimv S_4+\dimv S_3+\dimv S_6+\dimv S_9$ of $\widehat{M}$. Let $Y=\Gr_{\dimv \widehat{N}}(\widehat{M})$. Then the quiver Grassmannian $Y$ can be realized as
$$\{((a_0:a_1:a_2),(d_0:d_1),(b_0:b_1:b_2),(e_0:e_1),(c_0:c_1:c_2))\in \P^2\times \P^1\times \P^2\times \P^1\times\P^2|$$
$$d_0b_2=d_1b_1,b_0e_1=b_2e_0,a_1d_0+d_2d_1=0,c_0e_0+c_2e_1=0\},$$
with the desingularization map being the projection to the first, third and fifth components.

\end{document}